\newcommand{\Cal}[1]{{\mathcal #1}}
\newcommand{\End}{\operatorname{End}}
\newcommand{\Sym}{\operatorname{Sym}}
\newcommand{\Aut}{\operatorname{Aut}}
\newcommand{\Der}{\operatorname{Der}}
\newcommand{\Gp}{\mathsf{Gp}}
\newcommand{\Hp}{\mathsf{Heap}}
\def\Z{ \mathbb{Z} }
\DeclareMathOperator{\op}{op}
\newtheorem{thm}{Theorem}[section]
\newtheorem{cor}[thm]{Corollary}
\newtheorem{Lemma}[thm]{Lemma}
\newtheorem{prop}[thm]{Proposition}
\newtheorem{rem}[thm]{Remark}
\newtheorem{defin}[thm]{Definition}
\theoremstyle{definition}
\newtheorem{exam}[thm]{Example}
\newtheorem{exams}[thm]{Examples}
\theoremstyle{remark}
\numberwithin{equation}{section}
\newenvironment{Proof}{{\sc Proof.}\ }{~\rule{1ex}{1ex}\vspace{0.2truecm}}
\def\Z{ \mathbb{Z} }
\def\R{ \mathbb{R} }
\def\N{ \mathbb{N} }
\begin{document}

\title{Heaps and trusses\\ }

 \author{Mar\'{\i}a Jos\'e Arroyo Paniagua}
 \address{Departamento de Matem\'aticas. Divisi\'on de Ciencias B\'asicas e Ingenier\'ia. Universidad Aut\'onoma Metropolitana, Unidad Iztapalapa,  CP 09310, Ciudad de M\'exico, M\'exico.}
 \email{mja@xanum.uam.mx}
\author{Alberto Facchini}
	\address{Dipartimento di Matematica ``Tullio Levi-Civita'', Universit\`a di 
Padova, 35121 Padova, Italy}
 \email{facchini@math.unipd.it}
\thanks{The second author was partially supported by Ministero dell'Universit\`a e della Ricerca (Progetto di ricerca di rilevante interesse nazionale ``Categories, Algebras: Ring-Theoretical and Homological Approaches (CARTHA)''), Fondazione Cariverona (Research project ``Reducing complexity in algebra, logic, combinatorics - REDCOM'' within the framework of the programme Ricerca Scientifica di Eccellenza 2018), and the Department of Mathematics ``Tullio Levi-Civita'' of the University of Padua (Research programme DOR1828909 ``Anelli e categorie di moduli'').}

\begin{abstract}
We study commutators of congruences, idempotent endomorphisms and semidirect-product decompositions of heaps and trusses.\end{abstract}

\maketitle

\section{Introduction}

Heaps were already considered by Pr\"ufer \cite{10} and Baer \cite{1} more than a century ago, but they have received little attention for these 100 years (see, for instance, \cite{6}). Heaps are an algebraic structure endowed with a {\em ternary} operation, and perhaps that is why they have not generated much interest. Recently though, their importance for trusses, a notion due to T.~Brzezi{\'n}ski, have been motivating because of the relations of these algebraic structures with set-theoretic solutions of the Yang-Baxter equation and with left skew braces \cite{2022, 2018, Brebetween, Bre, Brzez}. 

There are three main reasons as to why we study heaps and trusses:

\noindent (1) They form the most natural examples of varieties with a Mal'tsev term (the operation itself!).

\noindent (2) Their natural relation with the notion of connector \cite{BG}, hence with the notion of commutator of congruences. 

\noindent (3) They give the most immediate description of our Newtonian Universe, that is, of our $3$-dimensional Euclidean geometrical affine real space (see Example~\ref{111}(a)). Instead of giving the standard model of the Newtonian Universe as the $3$-dimensional vector space $\R^3$, which presume that we fix a point (the origin), for heaps and trusses a privileged point is not fixed. It is nice to recall here that it was already observed in the fifteenth century by Nicholas of Cusa that ``the Universe itself is like an infinite circle, which has its center everywhere'', and therefore that, like in modern Cosmology, all observers are formally equivalent. Hence there is no doubt of how natural algebraic structures heaps and trusses are.

In this paper,  beyond the basics on heaps and trusses, we focus mainly on their commutators and semidirect products. We study the notion of commutator for heaps and trusses on the one hand because the ternary operation yields a particularly natural notion of connector, as we have already remarked in (2) above. Thus is natural that the question ``Huq=Smith?'' must be revisited for the algebraic structures with a ternary operation we are studying.
On the other hand, a computation of commutators of congruences in any algebraic structure leads immediately to the notions of abelian structure, solvability and nilpotency. Finally, we focus on the study of idempotent endomorphisms of heaps and trusses because idempotent endomorphisms of any algebraic structure immediately lead to the study of semidirect decompositions of the algebraic structure itself. 
Derivations of trusses are also briefly discussed.

We have tried to make this paper as self contained as possible. The second author is very grateful to Professor George Janelidze for an interesting discussion on the role of heaps and trusses.

\section{Basic notions and notation}

\subsection{Ternary operations, Mal'tsev operations.} In this paper, we will consider sets $X$ endowed with a ternary operation $p\colon X\times X\times X\to X$ (no identity is required to be satisfied, at the moment). These pairs  $(X,p)$ form a variety of algebras in the sense of Universal Algebra. Their morphisms $f\colon (X,p)\to (X',p')$ are the mappings $f\colon X\to X'$ such that $p'(f(x),f(y),f(z))=f(p(x,y,z))$ for every $x,y,z\in X$. In particular, these algebras $(X,p)$ are the objects of a category, whose initial object is the empty set $\emptyset$ (with its unique ternary operation), and whose terminal objects are the singletons (with their unique ternary operation). We will denote by $\ast$ any such algebra with one element.

Let $p\colon X\times X\times X\to X$ be a ternary operation on the set $X$. We say that $p$ is a {\em Mal'tsev operation} if $p(x,x,y)=y$ and $p(x,y,y)=x$ for every $x,y\in X$. It will be often convenient to replace the ternary operation $p$ on a set $X$ with an indexed family $\{\,b_y\mid y\in X\,\}$ of binary operations $b_y\colon X \times X\to X$ defined by $b_y(x,z)=p(x,y,z)$ for every $x,y,z\in X$. The family of binary operations $b_y$ is indexed in $X$ itself. Correspondingly, we get a family of {\em magmas} (=\,sets with a binary operation) $(X,b_y)$, which is again indexed in $X$ itself. We will also often use the notations $[x,y,z]$ instead of $p(x,y,z)$, and $x\cdot_yz$ instead of $b_y(x,z)$.

\begin{Lemma} A ternary operation $p$ on a set $X$ is a Mal'tsev operation if and only if, for the corresponding indexed family $\{\,b_y\mid y\in X\,\}$ of binary operations, the element $y$ is a two-sided identity of the magma $(X,b_y)$ for every $y\in X$.\end{Lemma}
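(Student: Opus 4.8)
The plan is to simply unwind the definition of the binary operations $b_y$ and then match the two equational axioms defining a Mal'tsev operation against the two halves of the assertion ``$y$ is a two-sided identity of $(X,b_y)$''. Recall that $b_y(x,z)=p(x,y,z)$, i.e. $x\cdot_y z=[x,y,z]$. So ``$y$ is a left identity of $(X,b_y)$'' unwinds to $b_y(y,z)=z$, that is $p(y,y,z)=z$, for all $y,z\in X$; and ``$y$ is a right identity of $(X,b_y)$'' unwinds to $b_y(x,y)=x$, that is $p(x,y,y)=x$, for all $x,y\in X$. Once this dictionary is in place, both implications are immediate.

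For the forward direction I would assume $p$ is a Mal'tsev operation. Specializing the identity $p(a,a,b)=b$ at $a=y$, $b=z$ gives $b_y(y,z)=p(y,y,z)=z$, so $y$ is a left identity of $(X,b_y)$; specializing the identity $p(a,b,b)=a$ at $a=x$, $b=y$ gives $b_y(x,y)=p(x,y,y)=x$, so $y$ is also a right identity of $(X,b_y)$. Hence $y$ is a two-sided identity of $(X,b_y)$ for every $y\in X$. For the converse I would assume that each $y$ is a two-sided identity of $(X,b_y)$: applying the left-identity law to the magma $(X,b_x)$ yields $p(x,x,y)=b_x(x,y)=y$ for all $x,y\in X$, and applying the right-identity law to the magma $(X,b_y)$ yields $p(x,y,y)=b_y(x,y)=x$ for all $x,y\in X$; thus $p$ satisfies both Mal'tsev identities.

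There is essentially no obstacle: the entire content of the lemma is the observation that, after renaming bound variables, the two axioms $p(x,x,y)=y$ and $p(x,y,y)=x$ are \emph{literally} the left-identity and right-identity laws for the family $\{b_y\}$. The only point requiring a bit of care is bookkeeping --- tracking which of the three slots of $p$ is being specialized (the middle slot carries the index $y$, so the ``left identity'' statement involves repeating $y$ in the first slot and the ``right identity'' statement involves repeating it in the third slot) --- but this is purely mechanical and can be dispatched in a couple of lines.
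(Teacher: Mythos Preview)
Your proof is correct; the paper in fact states this lemma without proof, treating it as an immediate consequence of the definition $b_y(x,z)=p(x,y,z)$, and your argument is exactly the routine unwinding the authors have in mind.
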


Notice that in a magma, that is, a set with a not-necessarily associative operation, a two-sided identity, when it exists, is unique. 

\medskip

A ternary operation $p$ on a set $X$ is {\em commutative} if $p(x,y,z)=p(z,y,x)$ for every $x,y,z\in~X$. Hence a  ternary operation $p$ on $X$ is commutative if and only if all binary operations $b_y$, $y\in X$, are commutative.

\subsection{Associative ternary operations}

A ternary operation $p$ on a set $X$ is {\em associative} if $p(p(x,y,z),w,u)=p(x,y,p(z,w,u))$ for every $x,y,z,w,u\in X$.

\medskip

For any magma $M$, there is a natural left action (the ``canonical Cayley left representation'') $\lambda_M\colon M\to M^M$, that maps any element $x\in M$ to left multiplication $\lambda_M(x)=\lambda_M^x$ by $x$, where $\lambda_M^x\colon M\to M$ is defined by  $\lambda_M^x(y)=xy$ for every $y\in X$. The mapping $\lambda_M$ is a magma morphism into the monoid $M^M$ of all mappings $M\to M$ if and only if $M$ is a semigroup. Similarly on the right: there is a natural right action $\rho_M\colon M\to M^M$, that maps any element $x\in M$ to right multiplication $\rho_M(x)=\rho_M^x$ by $x$, where $\rho_M^x\colon M\to M$ is defined by $\rho_M^x(y)=yx$ for every $y\in X$. The mapping $\rho_M$ is a magma antihomomorphism if and only if $M$ is a semigroup. Thus, for every magma $M$, the set $M$ is both a left $M$-set and a right $M$-set in a natural way.

Now that for a ternary operation $p$ on a set $X$ we have all the magmas $(X,b_x)$, i.e., all $(X,b_x)$-set structures $\lambda_{(X,b_x)}\colon(X,b_x)\to X^X$, $\lambda_{(X,b_x)}\colon u\in X\mapsto \lambda_{(X,b_x)}^u\in X^X$, where $\lambda_{(X,b_x)}^u\colon z\mapsto b_x(u,z)$, we want all these $(X,b_x)$-set structures on the set $X$ to be pairwise compatible. That is, for every $x,y\in X$, the set $X$ is both a left $(X, b_x)$-set and a right $(X, b_y)$-set, and we want any two of these structures to be compatible. Like in the case of an $R$-$S$-bimodule over two rings $R$ and $S$, this amounts to requiring that $(u\cdot_xz)\cdot_yw=u\cdot_x(z\cdot_yw)$ for every $z,u,w\in X$. Equivalently, to requiring that $\rho_{(X,b_y)}^w\circ\lambda_{(X,b_x)}^u=\lambda_{(X,b_x)}^u\circ \rho_{(X,b_y)}^w$ for every $u,w\in X$. Hence we say that all the magma structures $(X,b_x)$ are {\em compatible} if $(u\cdot_xz)\cdot_yw=u\cdot_x(z\cdot_yw)$ for every $x,y,z,u,w\in X$, or, equivalently, if $\rho_{(X,b_y)}^w\circ\lambda_{(X,b_x)}^u=\lambda_{(X,b_x)}^u\circ \rho_{(X,b_y)}^w$ for every $x,y,u,w\in X$. Clearly:

\begin{Lemma} A ternary operation $p$ on a set $X$ is associative if and only if all the corresponding magma structures $(X,b_x)$, $x\in X$, are pairwise compatible.\end{Lemma}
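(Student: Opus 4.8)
The plan is to check that, after rewriting everything in terms of the ternary operation $p$, the compatibility condition for the indexed family $\{\,b_x\mid x\in X\,\}$ and the associativity axiom for $p$ are literally the same universally quantified identity, so that neither implication costs anything beyond a renaming of dummy variables. In particular, no separate argument for the two directions will be needed.

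First I would use $b_x(u,z)=p(u,x,z)$, i.e.\ $u\cdot_xz=[u,x,z]$, to expand both sides of the compatibility identity. The left-hand side becomes $(u\cdot_xz)\cdot_yw=b_y(b_x(u,z),w)=p(p(u,x,z),y,w)$, and the right-hand side becomes $u\cdot_x(z\cdot_yw)=b_x(u,b_y(z,w))=p(u,x,p(z,y,w))$. Hence the statement ``all the magma structures $(X,b_x)$, $x\in X$, are pairwise compatible'' is exactly the assertion that
\[
p(p(u,x,z),y,w)=p(u,x,p(z,y,w))\qquad\text{for all }x,y,z,u,w\in X.
\]

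It then remains to compare this with the associativity axiom $p(p(x,y,z),w,u)=p(x,y,p(z,w,u))$, valid for all $x,y,z,w,u\in X$. The substitution of bound variables $(x,y,z,w,u)\mapsto(u,x,z,y,w)$ carries the associativity axiom onto the displayed identity, and conversely; since both identities are quantified over the whole of $X$, each one implies the other, and both directions of the equivalence follow at once.

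I do not anticipate a genuine obstacle; the only point requiring care is the bookkeeping of which argument of $p$ plays the role of the ``index'' of $b_x$ and which two serve as its arguments, because a hasty matching of variables can make associativity look strictly stronger than compatibility. Writing the two identities side by side and exhibiting the explicit permutation of dummy variables dispels this apparent asymmetry, which is presumably why the authors preface the statement with ``Clearly''.
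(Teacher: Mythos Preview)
Your proposal is correct: unfolding $u\cdot_x z=p(u,x,z)$ turns the compatibility identity into $p(p(u,x,z),y,w)=p(u,x,p(z,y,w))$, which is the associativity axiom after the bijective relabelling $(x,y,z,w,u)\mapsto(u,x,z,y,w)$. The paper gives no separate proof (it simply prefaces the lemma with ``Clearly:''), so your argument is exactly the intended one.
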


In particular, for $x=y$, we get that if a ternary operation $p$ on a set $X$ is associative, then all the binary operations $b_x$ are associative, i.e., that all the magmas 
$(X,b_x)$, $x\in X$, are semigroups.

\medskip

As we have said above, for any semigroup $S$, there is a semigroup morphism $\lambda\colon S\to S^S$ of $S$ into the monoid $S^S$ of all mappings $S\to S$ with composition of mappings. It associates with any element $x\in S$ left multiplication $\lambda_S^x\colon S\to S$ by $x$. The image $\lambda(S)$ is a subsemigroup of $S^S$. The kernel of the representation $\lambda$ is the congruence $\sim$ on the semigroup $S$ defined, for every $x,y\in S$, by $x\sim y$ if $xz=yz$ for every $z\in S$. Let us see how this elementary fact generalizes and applies to sets $X$ with an associative ternary operation~$p$:

\begin{prop}\label{2.3} Let $p$ be an associative ternary operation on a set $X$. For every $x,y\in X$, let $\tau_x^y\colon X\to X$ be the mapping defined by $\tau_x^y\colon z\in X\mapsto p(x,y,z)$ for every $z\in X$. Then:

{\rm (a)} $S_X:=\{\,\tau_x^y\mid x,y\in X\,\}$ is a subsemigroup of the monoid $X^X$.

{\rm (b)} 
%
The image of the canonical mapping $\lambda_{(X,b_y)}\colon (X,b_y)\to (X^X,\circ)$ is contained in $S_X$ for each $y\in X$.

{\rm (c)} $S_X=\bigcup_{y\in X}\lambda_{(X,b_y)}(X,b_y)$.\end{prop}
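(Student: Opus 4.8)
The plan is to unwind the definition of $\tau_x^y$ in terms of the binary operations $b_y$ and then lean on Lemma stating that associativity of $p$ is equivalent to pairwise compatibility of the magmas $(X,b_x)$. First I would observe that, by the definition of $b_y$, we have $\tau_x^y = \lambda_{(X,b_y)}^x$, i.e. $\tau_x^y(z) = p(x,y,z) = b_y(x,z) = x\cdot_y z$. This single identification will make parts (b) and (c) almost immediate, so I would get them out of the way first: part (b) holds because $\lambda_{(X,b_y)}(X,b_y) = \{\,\tau_x^y \mid x\in X\,\} \subseteq \{\,\tau_x^y \mid x,y\in X\,\} = S_X$; and part (c) holds because the reverse inclusion $S_X \subseteq \bigcup_{y\in X}\lambda_{(X,b_y)}(X,b_y)$ is just the statement that every $\tau_x^y$ lies in the image of some $\lambda_{(X,b_y)}$, namely the one indexed by that same $y$, so in fact $S_X = \bigcup_{y\in X}\lambda_{(X,b_y)}(X,b_y)$ on the nose.

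The substantive part is (a): I must show $S_X$ is closed under composition in $X^X$. Take $\tau_x^y$ and $\tau_u^w$ in $S_X$ and compute $(\tau_x^y\circ\tau_u^w)(z)$ for arbitrary $z\in X$. Unwinding, this is $p(x,y,p(u,w,z))$, and by associativity of $p$ this equals $p(p(x,y,u),w,z) = \tau_{p(x,y,u)}^w(z)$. Since $z$ was arbitrary, $\tau_x^y\circ\tau_u^w = \tau_{p(x,y,u)}^w \in S_X$, which is exactly closure. (Equivalently, one can phrase this via the compatibility relation $(x\cdot_y u)\cdot_w z = x\cdot_y(u\cdot_w z)$ from the preceding Lemma, but invoking associativity of $p$ directly is cleanest here.) Composition in $X^X$ is associative, so $S_X$ is automatically a subsemigroup, completing (a).

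There is essentially no obstacle: the only thing to be careful about is the bookkeeping of which index ``survives'' under composition — the left factor's index $y$ disappears and the right factor's index $w$ is retained, with the first argument absorbing $p(x,y,u)$ — and one should double-check the direction of composition ($\tau_x^y\circ\tau_u^w$ means apply $\tau_u^w$ first) matches the way associativity of $p$ is bracketed. I would also remark, as a sanity check consistent with the discussion before Proposition~\ref{2.3}, that $S_X$ is precisely the analogue of the subsemigroup $\lambda(S)$ of $S^S$ for a semigroup $S$, now assembled from all the binary operations $b_y$ at once; and that when each $(X,b_y)$ is a monoid with identity $y$ (the Mal'tsev case), the identity map $\id_X = \tau_y^y = \lambda_{(X,b_y)}^y$ already lies in $S_X$ for every $y$.
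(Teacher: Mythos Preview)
Your proof is correct and matches the paper's argument essentially line for line: the paper proves (a) by the same associativity computation $\tau_x^y\circ\tau_u^v=\tau_{p(x,y,u)}^v$, proves (b) via the identification $\lambda_{(X,b_y)}^x=\tau_x^y$, and notes (c) is immediate from (b). The only difference is presentational order and your added commentary at the end.
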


\begin{Proof} (a) $\tau_x^y\circ\tau_u^v(z)=p(x,y,p(u,v,z))=p(p(x,y,u),v,z)=\tau_{p(x,y,u)}^v(z)$ for every $z\in X$, so that $\tau_x^y\circ\tau_u^v=\tau_{p(x,y,u)}^v$. In particular, $S_X$ is a multiplicatively closed subset of $X^X$.

(b) 
The image of $\lambda_{(X,b_y)}\colon (X,b_y)\to (X^X,\circ)$ is $$\{\,\lambda_{(X,b_y)}^x\mid x\in X\,\}=\{\,p(x,y,-)\mid x\in X\,\}=\{\,\tau_x^y\mid x\in X\,\}\subseteq S_X.$$

(c) follows immediately from (b).
\end{Proof}

The mapping $\tau_x^y\colon X\to X$ defined in the statement of Proposition~\ref{2.3} is often called the {\em translation} from $x$ to $y$. A set $X$ with an associative ternary operation is a {\em semiheap}.

\section{Heaps}
A set $X$
 with an associative Mal'tsev operation $[-,-,-]$ is called a {\em heap}. A mapping $f\colon (X,[-,-,-])\to (X',[-,-,-])$ between two heaps is a {\em heap morphism} if $$f([x,x',x''])=[f(x), f(x'), f(x'')]$$ for every $x,x',x''\in X$. The category of heaps will be denoted by $\Hp$. In $\Hp$, the initial object is $\emptyset$ and the terminal object is $\ast$.
 
 Let's see how Proposition \ref{2.3} applies to non-empty heaps:
 
 \begin{thm}\label{X} Let $(X,p)$ be a non-empty heap.
Then the subsemigroup $S_X:=\{\,\tau_x^y\mid x,y\in X\,\}$ of the monoid $X^X$ is a subgroup of the symmetric group $\Sym_X$ on the set $X$, and all the monoids $(X,b_x)$, $x\in X$, are groups isomorphic to the group $S_X$ via the canonical Cayley left representations $\lambda_{(X,b_y)}\colon (X,b_y)\stackrel{\cong}{\longrightarrow}(S_X,\circ)$.\end{thm}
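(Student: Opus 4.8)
The plan is to leverage Lemma 2 (stated just before §2.3) together with Proposition~\ref{2.3}. By the Mal'tsev axioms and that Lemma, for each fixed $y\in X$ the element $y$ is a two-sided identity of the magma $(X,b_y)$, and since $p$ is associative, the magma $(X,b_y)$ is in fact a semigroup, hence a monoid with identity $y$. So the first task is just to observe that each $(X,b_y)$ is a monoid; the real content is to produce inverses and to pin down $S_X$.

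\medskip

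First I would show that each $\tau_x^y$ is a bijection of $X$. By part~(a) of Proposition~\ref{2.3}, composition of translations satisfies $\tau_x^y\circ\tau_u^v=\tau_{p(x,y,u)}^v$. Using the Mal'tsev identities one computes the two candidate inverses: $\tau_x^y\circ\tau_y^x = \tau_{p(x,y,y)}^x = \tau_x^x$, and similarly $\tau_y^x\circ\tau_x^y = \tau_{p(y,x,x)}^y=\tau_y^y$. Since $p(z,z,w)=w$ and $p(w,z,z)=w$, the translation $\tau_z^z$ is the identity map $\id_X$ for every $z$ (here I use non-emptiness only to know such $z$ exists, but actually $\tau_z^z=\id_X$ for each individual $z$). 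Hence $\tau_x^y\circ\tau_y^x=\id_X=\tau_y^x\circ\tau_x^y$, so $\tau_x^y$ is invertible with $(\tau_x^y)^{-1}=\tau_y^x\in S_X$. Therefore $S_X$ is a submonoid of $X^X$ (the identity being $\tau_z^z=\id_X$) closed under inverses, i.e. a subgroup of $\Sym_X$.

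\medskip

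Next I would fix $y\in X$ and analyze $\lambda_{(X,b_y)}\colon (X,b_y)\to (X^X,\circ)$. Associativity of $p$ gives, for $u,x\in X$, that $\lambda_{(X,b_y)}(u\cdot_y x)=\lambda_{(X,b_y)}(u)\circ\lambda_{(X,b_y)}(x)$ — this is exactly the statement that $\lambda$ is a semigroup morphism for the associative operation $b_y$, which follows from the $x=y$ case of compatibility (Lemma~2 and the remark after it). Since $b_y$ has identity $y$, $\lambda_{(X,b_y)}$ is a monoid morphism sending $y\mapsto \id_X$. By Proposition~\ref{2.3}(b) its image is $\{\tau_x^y\mid x\in X\}$, which by the previous paragraph consists of invertible maps, so the image lands in $S_X$ (and in fact, by (c), the images over all $y$ cover $S_X$). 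Injectivity is the point where I expect to actually use the heap structure carefully: if $\lambda_{(X,b_y)}(x)=\lambda_{(X,b_y)}(x')$ then $p(x,y,z)=p(x',y,z)$ for all $z$; taking $z=y$ and using $p(x,y,y)=x$ gives $x=x'$. Hence $\lambda_{(X,b_y)}$ is an injective monoid morphism onto the submonoid $S_X$; since $(X,b_y)$ is a monoid mapped bijectively onto the group $S_X$, $(X,b_y)$ is itself a group and $\lambda_{(X,b_y)}\colon(X,b_y)\xrightarrow{\ \cong\ }(S_X,\circ)$ is a group isomorphism. In particular all the groups $(X,b_x)$, $x\in X$, are isomorphic to the single group $S_X$, which is what we wanted.

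\medskip

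The only mildly delicate point is keeping straight that $S_X$ does \emph{not} depend on the choice of $y$ — it is defined using all pairs $x,y$ — while each individual Cayley map $\lambda_{(X,b_y)}$ does depend on $y$ yet still lands onto the \emph{whole} of $S_X$; this is precisely Proposition~\ref{2.3}(c). No genuine obstacle arises: once translations are shown to be bijective with inverses again translations (the short Mal'tsev computation above), everything else is a matter of assembling the monoid/group bookkeeping.
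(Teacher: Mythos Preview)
Your argument has a genuine gap at the surjectivity step. You correctly identify the image of $\lambda_{(X,b_y)}$ as $\{\tau_x^y\mid x\in X\}$, but then assert this equals all of $S_X$, citing Proposition~\ref{2.3}(c). That proposition only says $S_X=\bigcup_{y\in X}\lambda_{(X,b_y)}(X)$ --- the \emph{union over all $y$} covers $S_X$ --- not that each individual image (with $y$ fixed) exhausts $S_X$. Indeed, for mere semiheaps this is false (Example~\ref{111}(d)), so the Mal'tsev axioms must enter again here. The missing computation is short: for arbitrary $u,v\in X$ and the fixed $y$,
\[
\tau_{p(u,v,y)}^{\,y}(z)=p\bigl(p(u,v,y),y,z\bigr)=p\bigl(u,v,p(y,y,z)\bigr)=p(u,v,z)=\tau_u^v(z),
\]
so every $\tau_u^v\in S_X$ lies in $\lambda_{(X,b_y)}(X)$. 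This is precisely the content of the last part of the paper's proof.

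The gap is not merely cosmetic, because your deduction that $(X,b_y)$ is a group relies on the bijection with the group $S_X$; without surjectivity you have established neither the isomorphism nor the group structure on $(X,b_y)$. The paper avoids this dependency by first exhibiting inverses in $(X,b_x)$ directly --- $p(x,z,x)$ is inverse to $z$ --- and then verifying surjectivity as above. Your route (transporting the group structure along the bijection) is equally valid once you actually supply the one-line surjectivity check.
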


\begin{Proof} The monoids $(X,b_y)$ are groups because, for every $x\in X$, the element $p(x,y,x)$ is the inverse of $y$ in $(X,b_x)$. The monoid $S_X$ is a group because $\tau_y^x$ is the inverse of any $\tau_x^y$ in $S_X$.  The group morphisms $\lambda_{(X,b_y)}\colon (X,b_y)\to (S_X,\circ)$ are injective mappings because the monoids $(X,b_y)$ have an identity. It remains to show that every element of $S_X$ is in the image of $\lambda_{(X,b_y)}$. Now an arbitrary element of $S_X$ is of the form $\tau_u^v$. In order to prove that such an element is in the image of $\lambda_{(X,b_y)}$, it suffices to prove that $\tau_u^v=\tau_y^{p(v,u,y)}$. Passing to the inverse mappings, we must prove equivalently that $\tau_v^u=\tau_{p(v,u,y)}^y$, i.e., that $\tau_v^u(t)=\tau_{p(v,u,y)}^y(t)$ for every $t\in X$. This is easy.\end{Proof}

A subset $S$ of a heap is a {\em subheap} if $[x, y, z] \in S$ for every $x, y, z\in S$.

\begin{exams}\label{111} {\rm (a) The most natural example of heap is the following. In the ``$3$-dimensional Euclidean geometrical real space'' of Newtonian Physics the set $E_3$ of all points doesn't have a natural group structure (or a vector-space structure): the sum of two points doesn't have a natural meaning. But as soon as we fix a point (an {\em origin}), we can define an addition using the Parallelogram Rule, and we get an abelian group. In fact, we get a $3$-dimensional vector space over the field of real numbers. Hence $E_3$ does not have a natural group structure, if we want it we need the unnatural choice of an origin. But $E_3$ does have a natural heap structure: if $A,B,C\in E_3$, we can define $p(A,B,C)$ with the Parallelogram Rule, so that $A,B,C,p(A,B,C)$ are, orderly, the vertex of a parallelogram, and in this way we get a heap $(E_3,p)$.

(b) We can fix any line in the space, getting a subheap $E_1$ of the previous example $E_3$ in~(a).

(c) Fix any group $G$ and define a ternary operation $p$ on $G$ setting $p(x,y,z)=xy^{-1}z$ for every $x,y,z\in G$. Then $(G,p)$ is a heap. It can be proved \cite[Lemma~2.1(3)]{Bre} that every non-empty heap is of this form, and there is a natural functor of the category of groups into the categories of heaps. Nevertheless these two categories are not equivalent, for instance the category of heaps does not have a null object (the category of groups and the category of heaps are not equivalent categories also if we eliminate the empty heap from the objects of the category of heaps).

(d) As an example of a semiheap that is not a heap, consider any lattice $(L,\vee,\wedge)$ and define $p$ via $p(x,y,z)=x\vee y\vee z$ for every $x,y,z\in L$. Then $(L,p)$ is a semiheap that is not a heap, except for the trivial case of $L$ a lattice with one element. For instance, let $L$ be the lattice $\R$ of real numbers with their natural order. Then Theorem~\ref{X} does not hold for the semiheap $(\R,p)$. For instance, the images of all canonical Cayley left representations $\lambda_{(\R,b_y)}$ are properly contained in the semigroup $S_\R$, for every $y\in  \R$.}\end{exams}

\begin{Lemma}\label{3.3} {\rm \cite[Lemma 2.6]{Bre}} The following conditions are equivalent for a non-empty subheap $S$ of a heap $X$:

{\rm (a)} there exists $e \in S$ such that for every $x \in X$ and every $s\in S$ there exists $t\in S$ such that
$$[x, e,s] = [t, e,x].$$ 

{\rm (b)} For every $x \in X$ and every $e,s\in S$ there exists $t\in S$ such that
$[x, e,s] = [t, e,x]$. 

{\rm (c)} $[[x, e, s], x, e]\in S$ for every $x\in X$ and every $e,s\in S$.
\end{Lemma}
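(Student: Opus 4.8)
\begin{Proof}
The plan is to translate all three conditions into statements about the group structures $(X,b_e)$ given by Theorem~\ref{X}. Fix any $e\in S$ (possible since $S\neq\emptyset$). Then $(X,b_e)$ is a group with identity $e$ and with $[e,x,e]$ the inverse $x^{-1}$ of $x$; moreover a one-line associativity computation, or Example~\ref{111}(c) together with \cite[Lemma~2.1(3)]{Bre}, yields $[a,b,c]=a\cdot_e b^{-1}\cdot_e c$ for all $a,b,c\in X$, and in particular $[a,e,c]=a\cdot_e c$ and $[a,x,e]=a\cdot_e x^{-1}$. Since $S$ is a subheap containing $e$, it is closed under $\cdot_e$ and under $x\mapsto[e,x,e]$, so $S$ is a subgroup of $(X,b_e)$.

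With these formulas the equivalence of (b) and (c) becomes transparent. For $x\in X$ and $e,s\in S$ one has $[[x,e,s],x,e]=(x\cdot_e s)\cdot_e x^{-1}$ in $(X,b_e)$, so (c) asserts precisely that $x\cdot_e s\cdot_e x^{-1}\in S$ for all $x\in X$ and $s\in S$, that is, that $S$ is a normal subgroup of $(X,b_e)$, for every $e\in S$. Likewise, for fixed $e\in S$ the equation $[x,e,s]=[t,e,x]$ reads $x\cdot_e s=t\cdot_e x$, whose unique solution is $t=x\cdot_e s\cdot_e x^{-1}$; hence (b) again asserts that $S$ is normal in $(X,b_e)$ for every $e\in S$. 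So (b) and (c) are the same statement, and each trivially implies (a). If one wants to avoid the group language here, (b)$\Leftrightarrow$(c) and (c)$\Rightarrow$(a) follow from the identities $[[t,e,x],x,e]=[t,e,[x,x,e]]=[t,e,e]=t$ and $[[[x,e,s],x,e],e,x]=[[x,e,s],x,[e,e,x]]=[[x,e,s],x,x]=[x,e,s]$.

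The remaining implication, and the only real obstacle, is (a)$\Rightarrow$(b): from the existence of a single $e_0\in S$ with $S$ normal in $(X,b_{e_0})$ one must deduce that $S$ is normal in $(X,b_e)$ for \emph{every} $e\in S$, i.e.\ that normality of the subgroup $S$ does not depend on the chosen base point. I would prove this by exhibiting, for $e,e_0\in S$, the bijection $\phi\colon X\to X$, $\phi(x)=[x,e_0,e]$, with inverse $x\mapsto[x,e,e_0]$, and checking by associativity that $\phi$ is a group isomorphism $(X,b_{e_0})\to(X,b_e)$ (it sends $e_0$ to $e$ and $\phi(x)\cdot_e\phi(y)=[[x,e_0,e],e,[y,e_0,e]]=[[x,e_0,y],e_0,e]=\phi(x\cdot_{e_0}y)$). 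Because $S$ is a subheap containing $e_0$ and $e$, one gets $\phi(S)=S$, and transporting the normality of $S$ in $(X,b_{e_0})$ along $\phi$ gives normality of $S$ in $(X,b_e)$. Equivalently, one can argue directly: writing products in $(X,b_{e_0})$ by $\cdot_{e_0}$, we have $[x,e,s]=x\cdot_{e_0}e^{-1}\cdot_{e_0}s$ and $[t,e,x]=t\cdot_{e_0}e^{-1}\cdot_{e_0}x$, so the element $t:=x\cdot_{e_0}e^{-1}\cdot_{e_0}s\cdot_{e_0}x^{-1}\cdot_{e_0}e$ solves $[x,e,s]=[t,e,x]$, and it lies in $S$ since $e^{-1}\cdot_{e_0}s\in S$, $S$ is normal in $(X,b_{e_0})$, and $e\in S$. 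This establishes (b), closing the cycle.
\end{Proof}
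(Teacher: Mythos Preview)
Your proof is correct. The paper itself does not prove this lemma: it is only quoted, with attribution to \cite[Lemma~2.6]{Bre}, so there is no in-paper argument to compare against. Your strategy of translating everything into the group $(X,b_e)$ via the identity $[a,b,c]=a\cdot_e b^{-1}\cdot_e c$ is exactly the natural one, and it matches the way the paper (and \cite{Bre}) relates normal subheaps to normal subgroups in Corollary~\ref{3.4}.

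A couple of minor points of presentation. First, when you write ``the inverse $x^{-1}$ of $x$'' you should make explicit that the inverse is taken in the group under discussion, since in the direct argument for (a)$\Rightarrow$(b) you silently switch to inverses in $(X,b_{e_0})$; a sentence such as ``here $e^{-1}$ denotes $[e_0,e,e_0]$, the inverse in $(X,b_{e_0})$'' would remove any ambiguity. Second, the verification that $\phi\colon(X,b_{e_0})\to(X,b_e)$ is a group isomorphism with $\phi(S)=S$ is precisely the content of \cite[Lemma~2.4]{Bre}, so you could cite it rather than redo the computation; but your direct calculation is equally valid and self-contained. Either of your two arguments for (a)$\Rightarrow$(b) suffices on its own.
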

     
 A subheap $S$ of a heap $X$ is said to be a {\em normal subheap} if it is non-empty and satisfies the equivalent conditions of Lemma~\ref{3.3}.

 \begin{cor}\label{3.4} {\rm \cite[Corollary 2.7]{Bre}} The following conditions are equivalent for a subset $S$ of a heap $H$:

{\rm (a)} there exists $e \in S$ such that $S$ is a normal subgroup of $(X,b_e)$.

{\rm (b)} $S$ is non-empty and $S$ is a normal subgroup of $(X,b_e)$ for every $e \in S$.

{\rm (c)} $S$ is a normal subheap of $X$.\end{cor}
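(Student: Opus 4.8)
The plan is to move everything into the groups $(X,b_e)$, $e\in S$, furnished by Theorem~\ref{X}, and then read the three conditions off as translations of one another. First I would record two elementary facts. By the proof of Theorem~\ref{X}, for any $e\in X$ the inverse of $y$ in the group $(X,b_e)$ is $p(e,y,e)$; combining this with associativity and the Mal'tsev identities one gets, for all $x,y,z\in X$,
\[
p(x,y,z)=(x\cdot_e y^{-1})\cdot_e z,
\]
where the product and inverse are taken in $(X,b_e)$. This recovery formula is the only place a short direct computation is needed. Applying it twice, for $x\in X$ and $e,s\in S$ one then obtains
\[
[[x,e,s],x,e]=x\cdot_e s\cdot_e x^{-1},
\]
the conjugate of $s$ by $x$ in the group $(X,b_e)$.

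Next I would use the recovery formula to see that, for a non-empty subset $S$ and any $e\in S$, the statements ``$S$ is a subheap'' and ``$S$ is a subgroup of $(X,b_e)$'' are equivalent: a subheap containing $e$ is closed under $\cdot_e=p(-,e,-)$ and under the inversion $p(e,-,e)$, hence is a subgroup; conversely a subgroup of $(X,b_e)$ is closed under $p(x,y,z)=(x\cdot_e y^{-1})\cdot_e z$, hence is a subheap. In particular, if $S$ is a subheap then it is a subgroup of $(X,b_e)$ for \emph{every} choice of $e\in S$, which is what lets the quantifier over $e$ in conditions (a) and (b) behave well.

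With these in place the proof is a short cycle (a)$\Rightarrow$(c)$\Rightarrow$(b)$\Rightarrow$(a). For (a)$\Rightarrow$(c): if $S$ is a normal subgroup of $(X,b_e)$ for some $e\in S$, then $S$ is a subheap by the previous paragraph, and I verify condition (a) of Lemma~\ref{3.3} for this same $e$ — given $x\in X$ and $s\in S$, the equation $[x,e,s]=[t,e,x]$ reads $x\cdot_e s=t\cdot_e x$ in $(X,b_e)$, solved uniquely by $t=x\cdot_e s\cdot_e x^{-1}\in S$ by normality — so $S$ is a normal subheap. For (c)$\Rightarrow$(b): a normal subheap is non-empty and, for every $e\in S$, a subgroup of $(X,b_e)$; moreover it satisfies condition (c) of Lemma~\ref{3.3}, which by the second displayed identity says precisely that $x\cdot_e s\cdot_e x^{-1}\in S$ for all $x\in X$ and $s\in S$, i.e. that $S$ is normal in $(X,b_e)$, and this holds for every $e\in S$. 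Finally (b)$\Rightarrow$(a) is immediate since $S$ is non-empty. The only ``difficulty'' is bookkeeping: keeping the element $e$ used as a group identity aligned with the $e$ occupying the middle slot of the brackets in Lemma~\ref{3.3}, and tracking the quantifier ``for every $e\in S$'' through each implication; once the two displayed identities are established, every step is a one-line translation.
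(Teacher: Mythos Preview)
Your argument is correct. The identity $[x,y,z]=x\cdot_e y^{-1}\cdot_e z$ in $(X,b_e)$ is easily checked (e.g.\ via $[x,y,z]=[[x,e,e],y,z]=[x,e,[e,y,z]]$ and $[e,y,z]=[[e,y,e],e,z]$), and from it both the subheap/subgroup correspondence and the conjugation formula $[[x,e,s],x,e]=x\cdot_e s\cdot_e x^{-1}$ follow at once; your cycle (a)$\Rightarrow$(c)$\Rightarrow$(b)$\Rightarrow$(a) then reads off directly from Lemma~\ref{3.3}.

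There is nothing in the paper to compare against: the statement is quoted from \cite[Corollary~2.7]{Bre} and no proof is given here. Your write-up is exactly the natural way to deduce the corollary from Lemma~\ref{3.3} and Theorem~\ref{X}, and is essentially the argument one finds in the cited reference.
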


\begin{rem} {\rm Some care is necessary here.
   We have chosen our terminology in such a way that the empty set is a heap, the empty subset is a subheap of every heap, but normal subheaps are non-empty by definition. As a consequence:

   \noindent (a) Subheaps of a heap form a complete lattice (every intersection of subheaps is a subheap).

   \noindent (b) Congruences on  a heap form a complete lattice (every intersection of congruences is a congruence).

   \noindent (c) Normal heaps of a heap do not form a lattice in general, but only a partially ordered set, because the intersection of two normal subheaps can be empty.}\end{rem}

   \begin{exam} As an example, consider the heap $(\Z,[-,-,-])$ of integer numbers with $[a,b,c]=a-b+c$. The complete lattice of its subheaps is $\{\,a+b\Z\mid a,b\in\Z\,\}\cup\{\emptyset\}$. The set of its normal subheaps is $\{\,a+b\Z\mid a,b\in\Z\,\}$. Its congruences are the congruences $\equiv_n$ modulo $n$, and the complete lattice of congruence is $\{\,\equiv_n\mid n\in\N\,\}$, which is isomorphic to the lattice $(\N,|)$ with $0$ as its greatest element and $1$ as its least element.

   \smallskip

   Anticipating notions we will introduce later, in an abelian heap all non-empty subheaps are normal. There is an onto mapping $\{\,$normal subheaps$\,\}\to \{\,$congruences $\,\}$, $S\mapsto\sim_S$, that in our example $(\Z,[-,-,-])$ is the correspondence $a+b\Z\mapsto{}$congruence $\equiv_{|b|}$ modulo~$|b|$. This is an onto mapping, but is not a bijection. Of course, $a+b\Z=c+d\Z$ if and only if $|b|=|d|$ and $a\equiv_{|b|} c$. In Proposition~\ref{eq}, we will see that in order to get a one-to-one correspondence, that is, a bijection, it suffices to fix an element $e\in\Z$, and associate with any normal subheap $e+b\Z$ containing $e$ the congruence $\equiv_{|b|}$ modulo $|b|$.
\end{exam}

 It is well know that, for a group $G$, there is a lattice isomorphism between the lattice of all congruences on $G$ and the lattice of all normal subgroups of $G$. The  situation for a generic heap $(X,p)$ is the following: 

 \begin{prop}\label{eq}
    {\rm \cite[Proposition 2.10]{Bre}} Let $X$ be a heap and $e$ be a fixed element of $X$. Then there is a lattice isomorphism between the lattice of all congruences on the heap $X$ and the lattice of all normal subheaps of $X$ that contain $e$. It associates with any congruence $\sim$ the equivalence class  $[e]_\sim$ of $e$. Conversely, it associates with any normal subheap $S$ of $X$ with $e\in S$ the congruence $\sim_S$ on $X$ defined, for every $x,y\in X$, by $x\sim_Sy$ if there exists $s \in S$ such that
$[x, y,s] \in S$.
 \end{prop}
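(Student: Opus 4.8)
The plan is to reduce the whole statement to the classical correspondence between congruences and normal subgroups of a group, applied to the group $(X,b_e)=(X,\cdot_e)$, which by Theorem~\ref{X} has $e$ as identity element and in which the inverse of an element $y$ is $[e,y,e]$.

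First I would record two identities valid in any heap: using the Mal'tsev law and associativity,
\[
[x,y,z]=[x,y,[e,e,z]]=[[x,y,e],e,z]=[x,y,e]\cdot_e z,
\]
and similarly
\[
[x,y,e]=[[x,e,e],y,e]=[x,e,[e,y,e]]=x\cdot_e[e,y,e].
\]
Hence $[x,y,z]=(x\cdot_e y^{-1})\cdot_e z$ in the group $(X,\cdot_e)$; that is, the ternary operation $p$ is exactly the term operation obtained from the group $(X,\cdot_e)$ as in Example~\ref{111}(c). Consequently an equivalence relation on $X$ is a heap congruence (compatible with $p$) if and only if it is a group congruence on $(X,\cdot_e)$: one implication is immediate from the displayed formula, and for the other one uses that a heap congruence is automatically compatible with the binary operation $(x,z)\mapsto[x,e,z]=x\cdot_e z$ and with the unary operation $y\mapsto[e,y,e]=y^{-1}$. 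So the lattice of congruences of the heap $X$ and the lattice of congruences of the group $(X,\cdot_e)$ literally coincide.

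Next, by Corollary~\ref{3.4}, the normal subheaps of $X$ containing $e$ are precisely the normal subgroups of $(X,\cdot_e)$, again as an equality of the two posets. It then remains to identify the two assignments in the statement with the two maps of the classical group correspondence for $(X,\cdot_e)$. For a congruence $\sim$, the class $[e]_\sim$ is a subheap containing $e$ (apply $p$ to three elements of the class and use $[e,e,e]=e$), and, viewed inside $(X,\cdot_e)$, it is the kernel of $\sim$, hence the normal subgroup attached to $\sim$; in particular $[e]_\sim$ is a normal subheap. Conversely, for a normal subheap $S$ with $e\in S$, I claim $\sim_S$ is exactly the congruence $x\sim y\iff x\cdot_e y^{-1}\in S$ of the group $(X,\cdot_e)$: by the identities above $[x,y,s]=(x\cdot_e y^{-1})\cdot_e s$, so if $[x,y,s]\in S$ for some $s\in S$ then $x\cdot_e y^{-1}\in S$ because $S$ is a subgroup, while conversely if $x\cdot_e y^{-1}\in S$ then the choice $s:=e\in S$ gives $[x,y,e]=x\cdot_e y^{-1}\in S$. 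Since the two maps $\sim\mapsto[e]_\sim$ and $S\mapsto\sim_S$ are thus the mutually inverse, order-preserving bijections of the classical group correspondence transported along the two poset identities established above, they form the asserted lattice isomorphism.

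The routine points are the two heap identities and the reconciliation of the ``there exists $s\in S$'' formulation with the clean condition $x\cdot_e y^{-1}\in S$; the only step wanting a little care is checking that a heap congruence is genuinely a congruence of the group $(X,\cdot_e)$, i.e.\ that compatibility with $p$ forces compatibility with inversion, but this is settled by the formula $y^{-1}=[e,y,e]$. I expect no real obstacle: once $p$ is recognized as a term operation of the group $(X,\cdot_e)$, the whole proposition is the classical normal-subgroup/congruence correspondence for groups in disguise.
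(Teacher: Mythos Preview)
The paper does not prove this proposition itself; it is stated with a citation to \cite[Proposition~2.10]{Bre}. Your argument is correct and takes the natural route: you identify the heap congruences and the normal subheaps containing $e$ with, respectively, the congruences and the normal subgroups of the associated group $(X,\cdot_e)$ via Corollary~\ref{3.4} and the identity $[x,y,z]=x\cdot_e y^{-1}\cdot_e z$, and then invoke the classical correspondence for groups. This is exactly the viewpoint the paper adopts implicitly in later arguments (see the proofs of Theorem~\ref{bijp} and Lemma~\ref{eq''}), so your proof is fully in line with the paper's spirit.
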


 Recall that a {\em congruence} on a heap $(X,[-,-,-])$ is an equivalence relation $\sim$ on the set $X$ such that $[x,y,z]\sim[x',y',z']$, for every $x,x',y,y',z,z'\in X$ such that $x\sim x'$, $y\sim y'$ and $z\sim z'$. By Proposition \ref{eq}, for any heap $X$, the set of all normal subheaps of $X$ is the set of all congruence classes $[e]_\sim$ where $e$ ranges in the set $X$ and $\sim$ ranges in the set of all congruences on the heap $X$.
 
 \bigskip

 For any two normal subheaps $S,T$ of a heap $X$, we have that $\sim_S{}\subseteq{}\sim_T$ if and only if, for every $x,y\in X$ and every $s \in S$ such that $[x, y,s] \in S$, there exists $t\in T$ such that $[x, y,t] \in T$ \cite[Definition 2.9 and Proposition 2.10]{Bre}.

 If $S$ is a normal subheap of a heap $X$, for every $s\in S$ the congruence class of $s$ modulo $\sim_S$ is $S$.

 \bigskip

 By Proposition \ref{eq}, the lattice of all congruences on a heap $X$ is isomorphic to the lattice of all normal subgroups of any of the groups $(X,b_x)$. In particular, the lattice of all congruences on a heap is a complete modular lattice.

\medskip

Another ``restatement'' of Proposition \ref{eq} is given in Theorem~\ref{bijp}. In a group $G$, the partially ordered set $\Cal N(G)$ of all normal subgroups of $G$ is order isomorphic to the partially ordered set $\Cal C(G)$ of all congruences of the group $G$: there is a bijection $\Cal N(G)\to\Cal C(G)$, $N\mapsto{}\sim_N$. In Theorem~\ref{bijp} we will show that the corresponding mapping $\Cal N(X)\to\Cal C(X)$, $N\mapsto{}\sim_N$ for a heap $X$ is only a surjective mapping, which induces a bijection $\Cal N(X)/\!\simeq{}\to\Cal C(G)$, where $\simeq$ is a suitable equivalence relation on $\Cal N(X)$.

Recall that to any pre-order $\preceq$ on a set $A$ there corresponds a pair $(\simeq, \le)$, where $\simeq$ is the equivalence relation on $A$ defined, for every $a,b\in A$, by $a\simeq b$ if $a \preceq b$ and $b\preceq a$, and $\le$ is the partial order on the quotient set $A/\!\simeq{}:=\{\,[a]_{\preceq}\mid a\in A\,\}$  defined, for every $a,b\in A$, by $[a]_{\preceq}\le [b]_{\preceq}$ if $a \preceq b$ \cite[Proposition~2.2]{Carmelo}.

 \begin{thm}\label{bijp} Let $X$ be a heap. On the set $\Cal N(X)$ of all normal subheaps of $X$ define a pre-order $\preceq$ setting, for all $M,N\in \Cal N(X)$, $M\preceq N$ if for every $x,y\in X$ and $s\in M$ such that $[x, y,s] \in M$ there exists $t\in N$ such that $[x, y,t] \in N$. Let $\simeq$ be the equivalence relation on 
     $\Cal N(X)$ associated to the pre-order $\preceq$. Then the partially ordered set $\Cal N(X)/\!\simeq$ is order isomorphic to the partially ordered set $\Cal C(X)$ of all congruences of the heap $X$. 
 \end{thm}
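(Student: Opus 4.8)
The plan is to build the order isomorphism $\Cal N(X)/\!\simeq{}\to\Cal C(X)$ by descending the surjection $\Phi\colon \Cal N(X)\to\Cal C(X)$, $N\mapsto{}\sim_N$, whose existence and surjectivity are already contained in Proposition~\ref{eq}. First I would fix an arbitrary element $e\in X$. Proposition~\ref{eq} gives a lattice isomorphism between $\Cal C(X)$ and the sublattice $\Cal N_e(X)$ of normal subheaps containing $e$, sending $\sim$ to $[e]_\sim$ and $S$ to $\sim_S$. Since every normal subheap is a congruence class $[e']_\sim$ for some $e'$ and some congruence (by the remark following Proposition~\ref{eq}), the assignment $N\mapsto{}\sim_N$ is defined on all of $\Cal N(X)$; I would note that $\Phi$ restricted to $\Cal N_e(X)$ is precisely the bijection of Proposition~\ref{eq}, hence $\Phi$ is surjective, and that for $S$ a normal subheap, the congruence class of any $s\in S$ modulo $\sim_S$ is $S$ itself (stated in the excerpt).

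Next I would verify that the pre-order $\preceq$ is exactly the pullback of the inclusion order on $\Cal C(X)$ along $\Phi$, i.e.\ that for all $M,N\in\Cal N(X)$ one has $M\preceq N$ if and only if ${\sim_M}\subseteq{\sim_N}$. This is immediate from the characterization of containment of congruences quoted just before the theorem: ${\sim_S}\subseteq{\sim_T}$ holds if and only if for every $x,y\in X$ and every $s\in S$ with $[x,y,s]\in S$ there is $t\in T$ with $[x,y,t]\in T$, which is verbatim the definition of $M\preceq N$. Granting this, the equivalence relation $\simeq$ associated to $\preceq$ (as in the cited Proposition~2.2 of \cite{Carmelo}) satisfies $M\simeq N$ iff ${\sim_M}={\sim_N}$ iff $\Phi(M)=\Phi(N)$, so $\Phi$ factors through the quotient as a well-defined injective map $\overline{\Phi}\colon \Cal N(X)/\!\simeq{}\to\Cal C(X)$, $[N]_{\preceq}\mapsto{}\sim_N$. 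Surjectivity of $\overline{\Phi}$ follows from surjectivity of $\Phi$. Finally, $\overline{\Phi}$ and its inverse are order-preserving precisely because $\preceq$ was defined to be the pullback order: $[M]_\preceq\le[N]_\preceq$ iff $M\preceq N$ iff ${\sim_M}\subseteq{\sim_N}$. This exhibits $\overline{\Phi}$ as an order isomorphism of partially ordered sets.

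The only genuine content beyond bookkeeping is the identity ``$M\preceq N \iff {\sim_M}\subseteq{\sim_N}$'', and since the excerpt already records this containment criterion for congruences (attributed to \cite[Definition 2.9 and Proposition 2.10]{Bre}), the theorem reduces to the general fact that a surjection $A\to P$ onto a poset, pulled back to a pre-order on $A$, descends to an order isomorphism $A/\!\simeq{}\to P$. I expect the main (very minor) obstacle to be a careful check that $\preceq$ is genuinely a pre-order—reflexivity is trivial, and transitivity is transitivity of $\subseteq$ on $\Cal C(X)$ once the pullback identity is in hand—and a check that $\simeq$ as produced by the Carmelo correspondence really is the fiberwise equivalence of $\Phi$; both are routine once the dictionary between $\preceq$ and $\subseteq$ is established, so no serious difficulty is anticipated.
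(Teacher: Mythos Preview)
Your proposal is correct and rests on the same pivot as the paper: the identification $M\preceq N\Leftrightarrow{\sim_M}\subseteq{\sim_N}$, which the paper records in the paragraph immediately preceding the theorem and which you invoke explicitly. The paper's written proof then spends almost all of its space on an auxiliary characterization of the equivalence $\simeq$ (namely, $\sim_M{}={}\sim_N$ iff for every $e\in M$ there exists $f\in N$ with $[f]_{\sim_N}=f\cdot_e M$ in the group $(X,b_e)$) that is not required for the order-isomorphism statement itself; your argument, which simply descends the surjection $N\mapsto{\sim_N}$ along its fibers using the general pre-order/quotient correspondence, is the more direct route to the theorem as stated.
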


\begin{Proof}
    The equivalence relation ${}\simeq{}$ on 
     $\Cal N(X)$ is defined, for all $M,N\in \Cal N(X)$, by $M\simeq N$ if, for every $x,y\in X$, $$\exists s\in M\ \mbox{\rm s.~t.~}[x, y,s] \in M\Leftrightarrow\exists t\in N\ \mbox{\rm s.~t.~}[x, y,t] \in N.$$ Hence, in the notation of Proposition~\ref{eq}, $M\simeq N$ if and only if ${}\sim_M{}={}\sim_N{}$. 
     
     Let us prove that ${}\sim_M{}={}\sim_N{}$ if and only if, for every $e\in M$, there exists $f\in N$ such that $[f]_{\sim_N}$ coincides with the coset $f\cdot_eM$ of $f$ modulo $M$ in the group $(X,b_e)$. 
     
     Assume ${}\sim_M{}={}\sim_N{}$. Since $M$ and $N$ are normal subheaps, they are non-empty. Fix two elements $e\in M$ and $f\in N$. Let us prove that $[f]_{\sim_N}=f\cdot_eM$. 
     
     If $x\in [f]_{\sim_N}$, then $x\in X$ and $x\sim_N f$, so $x\sim_M f$. Thus there exists $m\in M$ such that $[x,f,m]\in M$. From \cite[Lemma~2.1(3)(b)]{Bre}, we get, in the group $(X,b_e)$, that $x\cdot_ef^{-1}\cdot_e m\in M$, and $M$ is a normal subgroup of $(X,b_e)$. Therefore $x\cdot_ef^{-1}\in M$, so $x\in M\cdot_ef=f\cdot_e M$.
     
     Conversely, suppose that $x\in M\cdot_ef=f\cdot_e M$. Then $x\cdot_ef^{-1}\in M$, so $[x,f,e]\in M$. Thus $x\sim_M f$, so $x\sim_N f$.
     
     For the inverse implication, suppose that $[f]_{\sim_N}=f\cdot_eM$, where $f\in N$. We must prove that ${}\sim_M{}={}\sim_N{}$. That is, that if $x,y\in X$, 
     $\exists s\in M$ s.~t.~$[x, y,s] \in M\Leftrightarrow\exists t\in N$ s.~t.~$[x, y,t] \in N$. 
    
     Fix two elements $x,y\in X$. Suppose that there exists $s\in M$ such that $[x, y,s] \in M$. Then, in the group $(X,b_e)$, we have that $x\cdot_ey^{-1}\cdot_es\in M$, so $x\cdot_ey^{-1}\in M$. Hence $x\cdot_ey^{-1}\cdot_ef\in M\cdot_ef=f\cdot_eM=[f]_{\sim_N}$. Thus $[x,y,f]\sim_Nf$, hence there exists $n\in N$ for which $[[x,y,f],f,n]\in N$, i.e., $[x,y,n]\in N$. 

     Conversely, suppose that $[x,y,t]\in N$ for some $t\in N$. Then $[[x,y,t],f,t]\in N$, from which $[x,y,t]\sim_Nf$. Therefore $[x,y,t]\in [f]_{\sim_N}=f\cdot_eM$. Equivalently, in the group $(X,b_e)$, we have that $x\cdot_ey^{-1}\cdot_ef\in f\cdot_eM=M\cdot_ef$, so $x\cdot_ey^{-1}\in M$. Thus $[x,e,[e,y,e]]\in M$, and therefore $[x,y,e]\in M$, as desired.
\end{Proof}

We leave to the reader the proof of the following easy result:

\begin{cor}\label{2.9} Let $X$ be a heap, $x$ a fixed element of $X$, $\sim$ a congruence on the heap $X$ and $N:=[x]_\sim$ the normal subgroup of the group $(X,b_x)$ corresponding to the congruence $\sim$ according to Proposition~\ref{eq}. The following conditions are equivalent for any pair $y,z$ of elements of $X$:

{\rm (a)} $y\sim z$;

{\rm (b)} The cosets $y\cdot_xN$ and $z\cdot_xN$ of the group $(X,b_x)$ coincide.
\end{cor}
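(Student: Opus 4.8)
The plan is to reduce the statement to elementary group theory inside the group $(X,b_x)$. Two facts make this possible. First, by Proposition~\ref{eq} applied with the fixed element $x$, the congruence $\sim$ coincides with $\sim_N$, where $N=[x]_\sim$ is a normal subheap of $X$ containing $x$; by Corollary~\ref{3.4}, $N$ is then a normal subgroup of the group $(X,b_x)$, and in particular $x\in N$. Second, inside the group $(X,b_x)$ the ternary operation can be written as $[a,b,c]=a\cdot_x b^{-1}\cdot_x c$, where $b^{-1}=[x,b,x]$ is the inverse of $b$ in $(X,b_x)$ (as recorded in the proof of Theorem~\ref{X}); this is the identity already used in the proof of Theorem~\ref{bijp}.

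For the implication $(a)\Rightarrow(b)$, I would argue as follows. If $y\sim z$, then $y\sim_N z$, so by the definition of $\sim_N$ there is some $s\in N$ with $[y,z,s]\in N$. Rewriting in $(X,b_x)$, this says $y\cdot_x z^{-1}\cdot_x s\in N$; since $s\in N$ and $N$ is a subgroup, cancelling $s$ on the right gives $y\cdot_x z^{-1}\in N$. Because $N$ is \emph{normal} in $(X,b_x)$, this is equivalent to the equality of left cosets $y\cdot_x N=z\cdot_x N$.

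For the implication $(b)\Rightarrow(a)$, suppose $y\cdot_x N=z\cdot_x N$; using normality of $N$ again, this gives $y\cdot_x z^{-1}\in N$. Since $x\in N$, we may take $s=x$ in the definition of $\sim_N$: indeed $[y,z,x]=y\cdot_x z^{-1}\cdot_x x=y\cdot_x z^{-1}\in N$, so $y\sim_N z$, that is, $y\sim z$.

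The argument is almost entirely a translation through the dictionary $[a,b,c]=a\cdot_x b^{-1}\cdot_x c$, so there is no serious obstacle. The only point that needs attention—and it is minor—is the passage between the condition $y\cdot_x z^{-1}\in N$ and the equality of \emph{left} cosets $y\cdot_x N=z\cdot_x N$: this is exactly where the normality of $N$ in $(X,b_x)$ (from Corollary~\ref{3.4}) is used, since without it one would only control right cosets.
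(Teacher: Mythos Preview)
Your argument is correct. The paper does not actually supply a proof of this corollary---it explicitly leaves it to the reader as an easy consequence of Proposition~\ref{eq}---and your verification via the dictionary $[a,b,c]=a\cdot_x b^{-1}\cdot_x c$ in $(X,b_x)$ is exactly the routine check intended; your remark that normality of $N$ is what converts $y\cdot_x z^{-1}\in N$ into equality of \emph{left} cosets is also accurate.
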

%



\section{Commutators in a heap}  

 Now let us consider the problem of determining a natural notion of commutator for a heap.  Let $R$ and $S$ be two congruences on a heap $X$, and let $R\times_XS$ be the set of all triples $(x,y,z)\in X^3$ such that $xRy$ and $ySz$. Notice that $R\times_XS$ is a  subheap of $X^3$. A canonical {\em connector} between $R$ and $S$ (see \cite[Example 1.2]{BG}, \cite{Ped1} and \cite{Ped2}) is the mapping $$p\colon R\times_XS\to X$$ defined by $p(x,y,z)=[x,y,z]$ for every $(x,y,z)\in R\times_XS$, provided that $xS[x,y,z]$ and $[x,y,z]Rz$ for every $(x,y,z)\in R\times_XS$. The {\em commutator} of $R$ and $S$ is the smallest congruence $[R,S]$ on the heap $X$ such that $R\times_XS\to X/[R,S]$, $(x,y,z)\mapsto [\,[x,y,z]\,]_{[R,S]}$, is a heap morphism. That is, for every $x_1,y_1, z_1,x_2,y_2, z_2,x_3,y_3, z_3\in X$ such that $x_iRy_i$ and $y_iSz_i$ for all $i=1,2,3$, one has that $$[[x_1,y_1, z_1],[x_2,y_2, z_2],[x_3,y_3, z_3]]\,[R,S]\,[[x_1,x_2, x_3],[y_1,y_2, y_3],[z_1,z_2, z_3]].$$ 

Let us compute the commutator of two congruences $R,S$ on a heap $(X,p)$. Fix an element $e$ in $X$. Let $N:=[e]_R, M:=[e]_S$ be the normal subgroups of the group $(X,b_e)$ corresponding to $R$ and $S$ (Proposition~\ref{eq}). Then $$\{\,(x, y, z)\mid (x, y)\in R\ \mbox{\rm and }(y, z)\in S\,\}=\{\,(ny, y, ym)\mid y\in X, n\in N, m\in M\,\},$$ where, to simplify notation, we have denoted by justapposition the multiplication in the group $(X,b_e)$. Let $C:=[e]_T$ be the normal subgroup corresponding to an arbitrary congruence $T$ of $(X,p)$. We must determine when the mapping $p\colon \{\,(ny, y, ym)\mid y\in X, n\in N, m\in M\,\}\to X/C$, $p(ny, y, ym)=nymC$, is a heap morphism. This occurs if and only if $$p(p(n_1y_1, y_1, y_1m_1),p(n_2y_2, y_2, y_2m_2),p(n_3y_3, y_3, y_3m_3))=n_1y_1m_1(n_2y_2m_2)^{-1}n_3y_3m_3C$$ coincides with $$p(p(n_1y_1,n_2y_2,n_3y_3),p( y_1, y_2,y_3),p(y_1m_1, y_2m_2, y_3m_3))=$$ $$=n_1y_1(n_2y_2)^{-1}n_3y_3(y_1y_2^{-1}y_3)^{-1}y_1m_1(y_2m_2)^{-1}y_3m_3C.$$ Equivalently, if and only if $$m_1(n_2y_2m_2)^{-1}n_3C=(n_2y_2)^{-1}n_3y_3(y_1y_2^{-1}y_3)^{-1}y_1m_1(y_2m_2)^{-1}C$$ for every $m_1, m_2\in M,$ $ y_2\in X$, $n_2,n_3\in M$. This can be rewritten as \begin{equation}
n_3^{-1}n_2y_2m_2m_1^{-1}y_2^{-1}n_2^{-1}n_3y_3y_3^{-1}y_2y_1^{-1}y_1m_1m_2^{-1}y_2^{-1}\in C.  \label{ct7i}  
\end{equation} Replacing $n_3^{-1}n_2$ with $n\in N$, $y_2$ with $y\in X$ and $m_2m_1^{-1}$ with $m\in M$, the previous condition (\ref{ct7i}) becomes $$nymy^{-1}n^{-1}ym^{-1}y^{-1}\in C$$ for every $m\in M, y\in X$, $n\in M$, that is, $$y^{-1}nymy^{-1}n^{-1}ym^{-1}\in C.$$ Let us prove that this is equivalent to the condition $[N,M]\subseteq C$. If $$y^{-1}nymy^{-1}n^{-1}ym^{-1}\in C$$  for every $m\in M, y\in X$, $n\in M$, we get, for $y=1_{(X,b_e)}=e$, that $nmn^{-1}m^{-1}\in C$, so that $[N,M]\subseteq C$. Conversely, if $[N,M]\subseteq C$, $m\in M, y\in X$ and $n\in M$, then $$y^{-1}nymy^{-1}n^{-1}ym^{-1}=[y^{-1}ny,m]\in[N,M]\in C.$$ We have thus proved that:

\begin{thm}\label{4.1} Let $R$ and $S$ be two congruences on a heap $(X,p)$. Fix an element $e$ in $X$. Let $N:=[e]_R$ and $M:=[e]_S$ be the normal subgroups of the group $(X,b_e)$ corresponding to the congruences $R$ and $S$ respectively. Then the commutator $[R,S]$ of $R$ and $S$ is the congruence on $(X,p)$ corresponding to the normal subgroup $[N,M]$ of the group $(X,b_e)$.\end{thm}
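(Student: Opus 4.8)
The plan is to reduce the statement to an elementary computation inside the group $G:=(X,b_e)$. First I would fix the element $e\in X$ and translate everything into group-theoretic language via Proposition~\ref{eq} and Corollary~\ref{2.9}: $G$ has identity $e$, the ternary operation is recovered in $G$ as $[x,y,z]=x\cdot_e y^{-1}\cdot_e z$ (cf.\ \cite[Lemma~2.1(3)]{Bre}), and under the correspondence of Proposition~\ref{eq} the congruences $R$, $S$ and an arbitrary congruence $T$ become the normal subgroups $N=[e]_R$, $M=[e]_S$ and $C=[e]_T$. To lighten notation I would denote the product of $G$ by juxtaposition. The first concrete step is to parametrize the subheap $R\times_X S\subseteq X^3$: since $xRy$ holds iff $x=ny$ for some $n\in N$ and $ySz$ holds iff $z=ym$ for some $m\in M$, one gets
\[
R\times_X S=\{\,(ny,\,y,\,ym)\mid y\in X,\ n\in N,\ m\in M\,\}.
\]

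Next I would determine exactly when the induced map $R\times_X S\to X/T$, $(ny,y,ym)\mapsto (nym)C$, is a heap morphism; by definition $[R,S]$ is the \emph{smallest} $C$ for which this occurs, so it suffices to pin down the precise condition on $C$. Substituting three generic triples $(n_iy_i,y_i,y_im_i)$, $i=1,2,3$, into the heap-morphism identity and expanding both sides with $[a,b,c]=ab^{-1}c$ produces an equality between two cosets of $C$ in $G$. Cancelling the factors common to the two sides — all the variables carrying index $1$ or $3$ disappear — the condition reduces to
\[
n_3^{-1}n_2\,y_2\,m_2m_1^{-1}\,y_2^{-1}\,n_2^{-1}n_3\,y_2\,m_1m_2^{-1}\,y_2^{-1}\in C
\]
for all admissible indices. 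Writing $n:=n_3^{-1}n_2$, $y:=y_2$, $m:=m_2m_1^{-1}$, which range respectively over all of $N$, $X$ and $M$, this becomes
\[
y^{-1}nymy^{-1}n^{-1}ym^{-1}=\bigl[\,y^{-1}ny,\,m\,\bigr]\in C\qquad\text{for all }y\in X,\ n\in N,\ m\in M.
\]

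Finally I would observe that, $N$ being normal in $G$, the elements $y^{-1}ny$ exhaust $N$, so this family of conditions is equivalent to $[N,M]\subseteq C$; hence the least such $C$ is $[N,M]$ itself, and $[R,S]$ is exactly the congruence on $(X,p)$ corresponding to the normal subgroup $[N,M]$ of $(X,b_e)$.

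The step I expect to be the main obstacle is the bookkeeping in the middle paragraph: one must carry out the expansion and the cancellations carefully enough to be certain that the heap-morphism identity \emph{for all triples} is genuinely equivalent to the reduced membership condition — with the passage to $[N,M]\subseteq C$ obtained in one direction from the commutator identity $y^{-1}nymy^{-1}n^{-1}ym^{-1}=[y^{-1}ny,m]$ and in the other from specializing $y=e$ — and that the substitution producing $n$, $y$, $m$ is indeed surjective onto $N\times X\times M$. Once those cancellations are organized, the conclusion is immediate.
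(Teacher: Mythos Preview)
Your proposal is correct and follows essentially the same route as the paper's own proof: parametrize $R\times_X S$ as $\{(ny,y,ym)\}$, expand the heap-morphism condition in the group $(X,b_e)$, cancel down to the single membership condition, substitute $n=n_3^{-1}n_2$, $y=y_2$, $m=m_2m_1^{-1}$, and conclude via $[y^{-1}ny,m]\in C \Leftrightarrow [N,M]\subseteq C$ (using specialization $y=e$ for one direction and the commutator identity for the other). The only cosmetic difference is that the paper writes out the pre-cancellation expression $n_3^{-1}n_2y_2m_2m_1^{-1}y_2^{-1}n_2^{-1}n_3y_3y_3^{-1}y_2y_1^{-1}y_1m_1m_2^{-1}y_2^{-1}$ before simplifying, whereas you present the already-reduced form directly.
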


In particular, a heap $(X,p)$ is {\em abelian} if and only if $[X,X]=\{ e\}$ in the group $(X,b_e)$, that is, if and only if the group $(X,b_e)$ is abelian. Since all the groups $(X,b_y)$ are isomorphic, this is equivalent to all the groups $(X,b_y)$ being abelian, that is, $[x,y,z]=[z,y,x]$ for every $x,y,z\in X$. 

\medskip

Notice that the variety of heaps is congruence-modular (paragraph before the statement of Corollary~\ref{2.9}), so that our notion of commutator is particularly good (see \cite{15}). We will come back to commutators of congruences in heaps in dealing with  commutators of congruences in trusses (Section~\ref{C}).

\begin{rem}\label{2.2}{\rm \cite[Remark 2.2]{Brzez} In \cite[Lemma~2.3(3)]{Bre}, it is shown that, for any heap $X$, $[[a,b,c],d,e]=[a,[d,c,b],e]=[a,b,[c,d,e]]$ for every $a,b,c,d,e\in X$. Hence in an abelian heap $X$, $ [[a,b,c],d,e]=[a,[b,c,d],e]=[a,b,[c,d,e]]$. This can be generalized as follows.

In dealing with heaps, that is, with a unique ternary operation $[-,-,-]$, all terms consists of an odd number of occurences of variables $[a_1,a_2,\dots,a_{2n+1}]$ with a suitable placement of brackets. In an abelian heap $(H,[-,-,-])$, the placement of brackets in multiple applications of the heap operation does not play any role, hence it is possible to write $[a_1,a_2,\dots,a_{2n+1}]$ for any such multiple applications. In such an abelian heap, the parity of the position of an element does matter, but any element within an even or odd position in the operation may exchange position with any other element in a respectively even or odd position. Moreover, if after such a parity preserving rearrangement two adjacent elements are equal to each other, it is possible to cancel these two equal consecutive elements.} \end{rem}

\section{Idempotent endomorphisms and semidirect products of heaps}

In any algebraic structure, idempotent endomorphisms are related to semidirect products. Let us see what occurs for semidirect products as far as heaps are concerned. In this section we will always suppose that $X$ is a non-empty heap (for $X=\emptyset$, the unique idempotent endomorphism $f\colon X\to X$ is the identity mapping of $X$).

\begin{prop}\label{boh} Let $X\ne\emptyset$ be a heap, $Y$ be a subheap of $X$, and $\omega$ a congruence on $X$. The following conditions are equivalent:

{\rm (a)} $Y$ is a set of representatives of the equivalence classes of $X$ modulo $\omega$, that is, $Y\cap[x]_\omega$ is a singleton for every $x\in X$.

{\rm (b)} There exists an idempotent heap endomorphism of $X$ whose image is $Y$ and whose kernel is $\omega$.

{\rm (c)} For every $e\in Y$, there exists an idempotent group endomorphism of the group $(X,b_e)$ whose image is the subgroup $Y$ of $(X,b_e)$ and whose kernel is the normal subgroup $[e]_\omega$ of $(X,b_e)$. 

{\rm (d)} There exist an element $e\in Y$ and an idempotent group endomorphism of the group $(X,b_e)$ whose image is the subgroup $Y$ of $(X,b_e)$ and whose kernel is the normal subgroup $[e]_\omega$ of $(X,b_e)$. 

{\rm (e)} For every $e\in Y$, the group $(X,b_e)$ is the semidirect product of its subgroup $Y$ and its normal subgroup $[e]_\omega$. 

{\rm (f)} There exist an element $e\in Y$ such that the group $(X,b_e)$ is the semidirect product of its subgroup $Y$ and its normal subgroup $[e]_\omega$. 

{\rm (g)}
For every $a\in X$ and every $c\in Y$ there exist a unique element $b\in X$ and a unique element $d\in Y$ such that $a=p(b,c,d)$ and $b\,\omega\, c$.

  {\rm (h)}
For every $a\in X$ and every $c\in Y$ there exist a unique element $b\in X$ and a unique element $d\in Y$ such that $a=p(d,c,b)$ and $b\,\omega\, c$.

{\rm (i)} The mapping $g\colon Y\to X/\omega$, defined by $g(y)=[y]_\omega$ for every $y\in Y$, is a heap isomorphism.

{\rm (l)} There exists a heap endomorphism of $X$ whose kernel is $\omega$ and whose restriction to $Y$ is the inclusion of $Y$ in $X$.
\end{prop}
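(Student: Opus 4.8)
The plan is to prove a cycle of implications that links the "heap" conditions, the "group" conditions, and the "semidirect product" conditions, exploiting the translation between heaps and groups provided by Theorem~\ref{X} and Proposition~\ref{eq}. A convenient route is $(b)\Rightarrow(c)\Rightarrow(d)\Rightarrow(f)\Rightarrow(e)\Rightarrow(a)$ for the group and semidirect-product side, then $(a)\Rightarrow(i)\Rightarrow(l)\Rightarrow(b)$ to close the main loop, and finally to splice in $(g)$ and $(h)$ by showing $(f)\Leftrightarrow(g)$ and $(f)\Leftrightarrow(h)$ via direct unwinding of the semidirect-product factorization in the group $(X,b_e)$.

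First I would fix, once and for all, an element $e\in Y$ and work in the group $G:=(X,b_e)$, whose identity is $e$ and in which $[e]_\omega$ is the normal subgroup $K$ corresponding to $\omega$ by Proposition~\ref{eq}, while $Y$ is a subgroup of $G$ because it is a subheap containing $e$ (indeed $y\cdot_ey'=[y,e,y']\in Y$ and $y^{-1}=[e,y,e]\in Y$). The equivalence $(c)\Leftrightarrow(e)$ and $(d)\Leftrightarrow(f)$ is then the standard fact that a group is the internal semidirect product of a subgroup and a normal subgroup exactly when there is an idempotent endomorphism (a "retraction") with that image and that kernel; $(c)\Rightarrow(d)$ and $(e)\Rightarrow(f)$ are trivial. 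For $(b)\Leftrightarrow(c)$: an idempotent heap endomorphism $f$ of $X$ with $f(e)=e$ is precisely an idempotent group endomorphism of $G$, since a heap morphism fixing $e$ satisfies $f(x\cdot_ey)=f([x,e,y])=[f(x),e,f(y)]=f(x)\cdot_ef(y)$; and conversely a group endomorphism of $G$ is automatically a heap endomorphism of $X$ because $[x,y,z]=x\cdot_ey^{-1}\cdot_ez$. One small point needs care here: in $(b)$ the endomorphism $f$ need not a priori fix the chosen $e\in Y$, but since $f$ is idempotent with image $Y$ we may simply rechoose $e$ inside $Y\cap\operatorname{im}f=\operatorname{Fix}(f)$, or observe that the statement is "for every $e\in Y$" only in (c),(e) and "there exists" in (d),(f), so we only need one good $e$.

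Next, the geometric/heap-theoretic conditions $(a)$, $(g)$, $(h)$, $(i)$, $(l)$. The implication $(a)\Leftrightarrow(i)$ is essentially a restatement: $g\colon Y\to X/\omega$ is surjective iff $Y$ meets every class, injective iff it meets each class at most once, and it is automatically a heap morphism; so $(a)$ says exactly that $g$ is bijective, hence a heap isomorphism. For $(i)\Rightarrow(l)$ one composes the quotient map $X\to X/\omega$ with $g^{-1}\colon X/\omega\to Y\hookrightarrow X$ to get a heap endomorphism of $X$ with kernel $\omega$ restricting to the inclusion on $Y$; conversely $(l)\Rightarrow(b)$ is immediate, since such an endomorphism is idempotent (it fixes $Y$ pointwise, and its image lies in $Y$ because... here one must check the image is exactly $Y$: the image contains $Y$ as $f|_Y=\operatorname{id}_Y$, and equals $Y$ because $f$ has kernel $\omega$ and $Y$ is a set of representatives — this is where I would thread the argument carefully) with image $Y$ and kernel $\omega$. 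Finally, to fold in $(g)$ and $(h)$: rewrite $a=p(b,c,d)=b\cdot_cc^{-1}\cdot_cd$, but it is cleaner to pass to $G=(X,b_c)$ for $c\in Y$; then $a=p(b,c,d)$ with $b\,\omega\,c$ becomes $a=b\cdot_c d$ with $b\in[c]_\omega=K$ and $d\in Y$, i.e., $a=bd$ with $b\in K$, $d\in Y$ — and existence-and-uniqueness of such a factorization for every $a$ is exactly the statement that $G=K\rtimes Y$ (as an internal product $KY=G$ with $K\cap Y=\{c\}$), i.e., condition $(f)$ read in the group $(X,b_c)$. Symmetrically $(h)$ gives the factorization $a=dc^{-1}b=d\cdot_c b$ with $d\in Y$, $b\in K$, which is $G=YK$ with trivial intersection, again equivalent to the semidirect product; and $YK=G\Leftrightarrow KY=G$ for a subgroup and a normal subgroup, with $K$ normal, so $(g)\Leftrightarrow(h)$.

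The main obstacle, I expect, is not any single hard step but the bookkeeping of \emph{which} element $e$ or $c$ is fixed in each condition and making the "for every $e$" versus "there exists $e$" quantifiers match up along the cycle: one must repeatedly use that all the groups $(X,b_y)$ are isomorphic (Theorem~\ref{X}) and that the constructions are natural in $e$, so that a semidirect-product decomposition or a retraction obtained for one base point transports to every base point in $Y$. A secondary point requiring a little attention is verifying, in $(l)\Rightarrow(b)$ and in the $(g)/(h)$ translations, that the image of the endomorphism is \emph{exactly} $Y$ (not just $\supseteq Y$), which follows from combining $f|_Y=\operatorname{id}$ with the fact that $\ker f=\omega$ and $Y$ hits each $\omega$-class exactly once; once that is nailed down, the rest is routine unwinding of definitions.
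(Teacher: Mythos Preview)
Your overall route mirrors the paper's---both translate the heap conditions into the group $(X,b_e)$ and invoke the standard idempotent-endomorphism/semidirect-product correspondence---but the paper's loop is tighter: it runs $(a)\Rightarrow(b)\Rightarrow(c)\Rightarrow(d)\Rightarrow(b)\Rightarrow(a)$, with $(c)\Leftrightarrow(e)$, $(d)\Leftrightarrow(f)$, $(e)\Leftrightarrow(g)\Leftrightarrow(h)$, $(a)\Leftrightarrow(i)$ and $(i)\Rightarrow(l)$ attached as spokes. In particular the passage from the ``there exists $e$'' conditions to the ``for every $e$'' conditions is handled via $(d)\Rightarrow(b)\Rightarrow(c)$: the given idempotent group endomorphism of $(X,b_e)$ is already a heap endomorphism of $X$, and a heap endomorphism fixing any $e'\in Y$ is automatically a group endomorphism of $(X,b_{e'})$. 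This is cleaner than your translation-transport argument for $(f)\Rightarrow(e)$, though yours can also be made precise.

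The real gap is at $(l)\Rightarrow(b)$, which you rightly flag as delicate. Your proposed fix---that $\operatorname{im}(f)=Y$ because ``$Y$ hits each $\omega$-class exactly once''---is precisely condition $(a)$, so the reasoning is circular. Worse, $(l)$ as literally stated does \emph{not} imply the other conditions. Take $X=\Z\times\Z$ with $[x,y,z]=x-y+z$ componentwise, $Y=\{0\}\times 2\Z$, let $\omega$ be the congruence whose classes are the fibres $\Z\times\{b\}$, and set $f(a,b)=(0,b)$. Then $f$ is a heap endomorphism with kernel $\omega$ and $f|_Y=\mathrm{id}_Y$, so $(l)$ holds; yet the $\omega$-class of $(0,1)$ misses $Y$ entirely, so $(a)$ fails. (The paper leaves $(l)\Rightarrow(i)$ ``as an exercise'' and never confronts this; presumably the intended $(l)$ should also require $f(X)\subseteq Y$, i.e.\ that $f$ retract $X$ onto $Y$.) To close your cycle soundly, bypass $(l)$ and prove $(a)\Rightarrow(b)$ directly, as the paper does: define $f(x)$ to be the unique element of $Y\cap[x]_\omega$ and check that this $f$ is an idempotent heap endomorphism with image $Y$ and kernel~$\omega$.
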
 

\begin{proof}
    (a)${}\Rightarrow{}$(b) Suppose that (a) holds. Let $f\colon X\to X$ be the mapping that associates with each $x\in X$ the unique element of the singleton $Y\cap[x]_\omega$. In order to show that $f$ is a heap endomorphism, fix $x,y,z\in X$. Let $x',y',z'$ be the unique elements of  $Y\cap[x]_\omega$, $Y\cap[y]_\omega$, $Y\cap[z]_\omega$ respectively. Then $f(x)=x'$, $f(y)=y'$, $f(z)=z'$, and $[x',y',z']\in Y$ because $Y$ is a subheap. Moreover $x\,\omega\, x'$, $y\,\omega\, y'$ and $z\,\omega\, z'$, hence $[x,y,z]\,\omega\,[x',y',z']$. Thus $[x',y',z']\in Y\cap [[x,y,z]]_\omega$, so $f([x,y,z])=[x',y',z']=[f(x),f(y),f(z)]$.

It remains to show that $f$ is idempotent. If $x\in X$, let $x'$ be the unique element of $Y\cap[x]_\omega$. Then $f(x)=x'$ and $x'\in Y\cap [x']_\omega$, so $f(x')=x'$. This proves that $f(f(x))=f(x)$.

(b)${}\Rightarrow{}$(c) Let $f\colon X\to X$ be an idempotent heap endomorphism with $f(X)=Y$ and kernel $\omega$. Let $e$ be an element of $Y$. Then $f(e)=e$, so that $f$ is also a group endomorphisms of the group $(X,b_e)$. Its kernel is $f^{-1}(e)= [e]_\omega$.

(c)${}\Rightarrow{}$(d) follows from the fact that $Y\ne\emptyset$, because $X\ne\emptyset$ and $Y=f(X)$.

(d)${}\Rightarrow{}$(b) If (d) holds and $f$ is an idempotent group endomorphism of $(X,b_e)$ whose image is $Y$ and whose kernel is $[e]_\omega$, then $f$ is the required heap endomorphism of $X$. 

(b)${}\Rightarrow{}$(a) Let $f$ be an idempotent heap endomorphism of $X$ whose image is $Y$ and whose kernel is $\omega$. Let us prove that $Y\cap[x]_\omega=\{f(x)\}$ for every $x\in X$. If $y\in Y\cap[x]_\omega$, then $y=f(z)$ for some $z\in X$, and $y\,\omega\,x$. Thus $y=f(z)=f(f(z))=f(y)=f(x)$. This proves that $Y\cap[x]_\omega\subseteq\{f(x)\}$. The reverse inclusion is trivial.

(c)${}\Leftrightarrow{}$(e) and (d)${}\Leftrightarrow{}$(f) are well known.

(e)${}\Leftrightarrow{}$(g) and (e)${}\Leftrightarrow{}$(h) are just a restatement of the fact that a group $G$ is the semidirect product of a normal subgroup $N$ and a subgroup $H$ if and only if every element of $G$ can be written in a unique way in the form $nh$, if and only if every element of $G$ can be written in a unique way in the form $hn$. For instance, suppose that (e) holds. To prove (g), fix two elements $a\in X$ and $c\in Y$. The group $(X,b_c)$ is the semidirect product of $Y$ and $[c]_\omega$. Thus the element $a$ of $X$ can be written in a unique way as a product of an element of $[c]_\omega$ and an element of $Y$, that is, there exist a unique $b\in [c]_\omega$ and a unique $d\in Y$ such that $a=b_c(b,d)$. Equivalently, there exist a unique $b\in X$ and a unique $d\in Y$ such that $a=p(b,c,d)$ and $b\,\omega\, c$. 

(a)${}\Leftrightarrow{}$(i) The mapping $g\colon Y\to X/\omega$, defined by $g(y)=[y]_\omega$ for every $y\in Y$, is always a heap homomorphism. It is a bijective mapping if and only if for every element $x$ of $X$ there is a unique element $y\in Y$ congruent to $x$ modulo $\omega$, that is, if and only if (a) holds.

(i)${}\Rightarrow{}$(l) If $\varepsilon\colon Y\to X$ is the inclusion, $\pi\colon X\to X/\omega$ is the canonical projection, and $g=\pi\varepsilon$ is a heap isomorphism, then the composite mapping  $((\pi\varepsilon)^{-1}\pi)\varepsilon$  is the identity of $Y$, hence $\varepsilon((\pi\varepsilon)^{-1}\pi)\varepsilon$ is the inclusion of $Y$ in $X$. Therefore $\varepsilon(\pi\varepsilon)^{-1}\pi$ is a heap endomorphism of $X$ whose kernel is $\omega$ and whose restriction to $Y$ is the inclusion of $Y$ in $X$.

We leave (l)${}\Rightarrow{}$(i) as an exercise to the reader. \end{proof}

If $X$ is a heap, and its subheap $Y$ and the congruence $\omega$ on $X$ satisfy the equivalent conditions of Proposition \ref{boh}, we will say that $X$ is the {\em (inner) semidirect product} of $\omega$ and $Y$, and write $X=\omega\rtimes Y$. It is now easy to prove that:

\begin{thm}
    Let $X$ be a heap. Then there is a one-to-one correspondence between:

    {\rm (a)} The set $A$ of all idempotent heap endomorphisms of $X$, that is, the set of all $f\in\End_{\Hp}(X)$ such that $f^2=f$.

     {\rm (b)} The set $B$ of all pairs $(\omega,Y)$, where $\omega$ is a congruence on $X$, $Y$ is a subheap of $X$, and $X=\omega\rtimes Y$.

Given an idempotent heap endomorphism $f\in A$, the corresponding pair in $B$ is $$(\ker(f), f(X)).$$ Conversely, if $(\omega,Y)\in B$, the idempotent endomorphism $f$ of $X$ is defined by $Y\cap[x]_\omega=\{f(x)\}$ for every $x\in X$.
\end{thm}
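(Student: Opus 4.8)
The plan is to exhibit the two assignments and check they are mutually inverse, using Proposition~\ref{boh} as the main engine. First I would define the map $\Phi\colon A\to B$ by $\Phi(f)=(\ker(f),f(X))$. To see this is well-defined I must check that $f(X)$ is a subheap (immediate, since $f$ is a heap morphism), that $\ker(f)$ is a congruence (standard for any algebra in a variety), and that the pair $(\ker(f),f(X))$ satisfies the equivalent conditions of Proposition~\ref{boh}: this is exactly the content of the implication (b)${}\Rightarrow{}$(a) (together with (b) itself being assumed) in that proposition, so $X=\ker(f)\rtimes f(X)$ by definition of the inner semidirect product. Hence $\Phi(f)\in B$.

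Next I would define $\Psi\colon B\to A$ by sending $(\omega,Y)$ to the mapping $f$ determined by $Y\cap[x]_\omega=\{f(x)\}$ for every $x\in X$; this mapping is well-defined precisely because condition (a) of Proposition~\ref{boh} guarantees each $Y\cap[x]_\omega$ is a singleton, and the implication (a)${}\Rightarrow{}$(b) of that proposition shows $f$ is an idempotent heap endomorphism, so $\Psi(f)\in A$. It then remains to verify $\Phi\circ\Psi=\id_B$ and $\Psi\circ\Phi=\id_A$. For $\Psi\circ\Phi=\id_A$: given $f\in A$, the endomorphism $f'=\Psi(\Phi(f))$ is characterised by $f(X)\cap[x]_{\ker f}=\{f'(x)\}$; but the proof of (b)${}\Rightarrow{}$(a) inside Proposition~\ref{boh} shows precisely that $f(X)\cap[x]_{\ker f}=\{f(x)\}$, so $f'=f$. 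For $\Phi\circ\Psi=\id_B$: given $(\omega,Y)\in B$ with associated $f=\Psi(\omega,Y)$, I must show $\ker(f)=\omega$ and $f(X)=Y$. The equality $f(X)=Y$ follows since $f(x)\in Y$ for all $x$ and, conversely, $f(y)=y$ for $y\in Y$ because $y\in Y\cap[y]_\omega$; and $\ker(f)=\omega$ holds because $f(x)=f(x')$ iff $Y\cap[x]_\omega=Y\cap[x']_\omega$ iff $[x]_\omega=[x']_\omega$ (the classes being nonempty and $Y$ meeting each in exactly one point), i.e.\ iff $x\,\omega\,x'$.

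The only genuinely delicate point, and the one I expect to be the main obstacle, is making sure the well-definedness of $\Psi$ and the two verifications do not secretly re-prove parts of Proposition~\ref{boh}: the cleanest route is to invoke the equivalences (a)${}\Leftrightarrow{}$(b) there as a black box and extract from its proof the single fact that the idempotent endomorphism attached to a set of representatives $Y$ modulo $\omega$ satisfies $Y\cap[x]_\omega=\{f(x)\}$ and $f(X)=Y$, $\ker(f)=\omega$. Everything else is a routine bookkeeping check that two explicitly given maps compose to the identity in both directions, which I would state is ``now easy'' exactly as the paper does.
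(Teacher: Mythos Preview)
Your proposal is correct and follows precisely the route the paper intends: the paper does not give a detailed proof at all, merely writing ``It is now easy to prove that'' immediately after Proposition~\ref{boh} and the definition of $X=\omega\rtimes Y$, so the bijection is meant to be read off directly from the equivalence (a)${}\Leftrightarrow{}$(b) of that proposition together with the explicit description of $f$ given in its proof. Your verification that $\Phi$ and $\Psi$ are mutually inverse is exactly the bookkeeping the paper omits, and your citations of the relevant implications inside Proposition~\ref{boh} are the right ones.
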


Now let $X$ be a heap, $\omega$ be a congruence on $X$, $Y$ be a subheap of $X$, and suppose $X=\omega\rtimes Y$. Then, for every fixed element $e\in Y$, the group $(X,b_e)$ is the semidirect product of its subgroup $Y$ and its normal subgroup $[e]_\omega$ (Proposition~\ref{boh}), so that 
the group $(Y,b_e)$ acts over the group $K:=[e]_\omega$, via conjugation: there is a group homomorphism $\alpha\colon (Y,\beta_e)\to(\Aut_{\Gp}(K),\circ)$, $\alpha\colon y\in Y\mapsto \alpha_y$, where $$\alpha_y(k)=yky^{-1}=[[y,e,k],e,[e,y,e]]]=[y,e,[k,y,e]]$$
for every $y\in Y$, $k\in K$.
 
\bigskip

We leave to the reader the easy proof of the following result:

\begin{prop}\label{vhl} Let $f$ be an idempotent heap endomorphism of a heap $(X,[-,-,-])$, let $\omega$ be the kernel of $f$, so that $\omega$ is a congruence of the heap $X$, and let $e$ be a fixed element of $Y:=f(X)$. Then $[e]_\omega=f^{-1}(e)$ and there is an action, i.e., a heap morphism, $$\alpha\colon (Y,[-,-,-])\to \Aut_{\Hp}([e]_\omega),$$ defined by $\alpha_y(k)=[y,e,[k,y,e]]$ for every $y\in Y$ and $k\in [e]_\omega$ . Moreover, $\alpha_e$ the identity automorphism of $[e]_\omega$.
\end{prop}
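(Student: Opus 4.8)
The plan is to verify directly that the formula $\alpha_y(k)=[y,e,[k,y,e]]$ gives a well-defined map $[e]_\omega\to[e]_\omega$, that each $\alpha_y$ is a heap automorphism of $[e]_\omega$, and that $y\mapsto\alpha_y$ is itself a heap morphism into $\Aut_{\Hp}([e]_\omega)$; the statement $[e]_\omega=f^{-1}(e)$ was already established in Proposition~\ref{boh}(b)$\Rightarrow$(c), so I would only recall it. The cleanest route is to translate everything into the group $(X,b_e)$, where $f$ becomes an idempotent group endomorphism with image $Y$ and kernel $K:=[e]_\omega$ a normal subgroup (Proposition~\ref{boh}(c)). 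Using the identities $[y,e,[k,y,e]]=y\cdot_e e^{-1}\cdot_e(k\cdot_e y^{-1}\cdot_e e)=y\cdot_e k\cdot_e y^{-1}$ (up to the bookkeeping of $e=1$ in $(X,b_e)$, as already spelled out in the paragraph preceding the proposition), one sees that $\alpha_y$ is precisely conjugation by $y$ inside $(X,b_e)$ restricted to the normal subgroup $K$. Normality of $K$ gives $\alpha_y(K)\subseteq K$, and conjugation by a group element is always an automorphism, so each $\alpha_y$ is a group automorphism of $K$; since a heap automorphism of the subheap $K$ is the same thing as a group automorphism of $(K,b_e)$ (any heap morphism fixing $e$ is a group morphism of the associated groups, and $e\in K$), this already yields $\alpha_y\in\Aut_{\Hp}([e]_\omega)$.

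Next I would check that $\alpha$ is a heap morphism $(Y,[-,-,-])\to\Aut_{\Hp}(K)$. Recall from Theorem~\ref{X} (or Proposition~\ref{eq}) that the heap operation on $\Aut_{\Hp}(K)$, once we pick the basepoint $\id_K$, is $[\varphi,\psi,\chi]=\varphi\circ\psi^{-1}\circ\chi$, and that a heap morphism between heaps amounts to a group homomorphism between the associated pointed groups once compatible basepoints are fixed. With $e\in Y$ as basepoint on the domain and $\id_K=\alpha_e$ on the codomain, it therefore suffices to verify that $y\mapsto\alpha_y$ is a group homomorphism $(Y,b_e)\to(\Aut_{\Gp}(K),\circ)$, i.e.\ $\alpha_{y\cdot_e y'}=\alpha_y\circ\alpha_{y'}$ and $\alpha_e=\id_K$. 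Both are immediate from the fact that $\alpha_y$ is conjugation by $y$: conjugation by $y\cdot_e y'$ equals conjugation by $y$ followed by—wait, composed with—conjugation by $y'$ in the appropriate order, and conjugation by the identity $e$ is the identity. This gives the group-homomorphism statement, hence the heap-morphism statement, and in particular the last assertion $\alpha_e=\id_K$.

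The only genuinely non-routine point—really the crux of why the formula is stated this way rather than as bare conjugation—is checking that the purely heap-theoretic expression $[y,e,[k,y,e]]$ (a) lands in $K=[e]_\omega$ and (b) equals $y\cdot_e k\cdot_e y^{-1}$ in $(X,b_e)$; both follow from the computations $[y,e,k']=y\cdot_e e^{-1}\cdot_e k'$ and $[k,y,e]=k\cdot_e y^{-1}\cdot_e e$ together with the identities of Remark~\ref{2.2} / \cite[Lemma~2.3(3)]{Bre}, exactly as in the displayed formula in the paragraph preceding Proposition~\ref{vhl}, so I would simply point to that computation. For well-definedness into $\Aut_{\Hp}(K)$ one also wants $\alpha_y$ to genuinely preserve $K$ as a subset, which is normality of $[e]_\omega$ in $(X,b_e)$—and that is Corollary~\ref{3.4}. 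Having assembled these pieces, the proof is a short verification, and I expect the main obstacle to be purely notational: keeping straight the translation between the ternary bracket with the roving basepoint and the fixed group $(X,b_e)$, and being careful that $e$ plays the role of the identity so that the conjugation formula comes out with the brackets in the stated positions.
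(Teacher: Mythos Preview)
Your proposal is correct and follows exactly the route the paper sets up: the paragraph immediately preceding the proposition already rewrites $\alpha_y(k)$ as conjugation $yky^{-1}$ in the group $(X,b_e)$, and the paper then leaves the remaining verifications to the reader (pointing to Remark~\ref{2.2}). Your argument---well-definedness from normality of $K=[e]_\omega$ in $(X,b_e)$, each $\alpha_y$ a heap automorphism because it is a group automorphism of $(K,b_e)$, and $y\mapsto\alpha_y$ a heap morphism because conjugation gives a group homomorphism $(Y,b_e)\to(\Aut(K),\circ)$ sending $e$ to $\id_K$---is precisely the intended verification.
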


The operation $[-,-,-]$ on $\Aut_{\Hp}([e]_\omega)$ is defined by $[f,g,h]=f\circ g^{-1}\circ h$ for every $f,g,h\in \Aut_{\Hp}([e]_\omega)$. 
All computations necessary to check the statement of Proposition~\ref{vhl} follow easily making use of the techniques presented in Remark~\ref{2.2}.

\bigskip

We are ready to define the outer semidirect product of heaps:

\begin{prop} Let $K$ and $Y$ be heaps. Let $\alpha\colon Y\to \Aut_{\Hp}(K),$ $\alpha\colon b\in Y\mapsto \alpha_b$, be a heap morphism, and let $y$ be an element of $Y$ that is mapped by $\alpha$ to the identity automorphism $\alpha_y$ of $K$. On the cartesian product $K\times Y$ define a ternary operation $[-,-,-]$ setting $$[(k_1,y_1),(k_2,y_2),(k_3,y_3)]:=([k_1,\alpha_{[y_1,y_2,y]}(k_2), \alpha_{[y_1,y_2,y]}(k_3)], [y_1,y_2,y_3])$$ for every $(k_1,y_1),(k_2,y_2),(k_3,y_3)\in K\times Y$. Then:

{\rm (a)} $K\times Y$ is a heap and $K\times\{y\}$ is a normal subheap of $K\times Y$ isomorphic to $K$. 

{\rm (b)} For every element $k\in K$, $\{k\}\times Y$ is a subheap of $K\times Y$ isomorphic to $Y$ and the mapping $f\colon K\times Y\to K\times Y$ defined by $f(a,b)=(k,b)$ for every $(a,b)\in K\times Y$ is an idempotent heap endomorphism of $K\times Y$.
\end{prop}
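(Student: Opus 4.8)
The plan is to verify directly that the proposed ternary operation on $K\times Y$ satisfies the two defining axioms of a heap --- the Mal'tsev identities and associativity --- and then to identify the claimed subheaps and endomorphism. For readability I would abbreviate $\alpha_{[y_1,y_2,y]}$ by a single symbol, say $\varphi$, noting that since $\alpha_y=\id_K$ and $\alpha$ is a heap morphism, $\varphi = \alpha_{[y_1,y_2,y]} = [\alpha_{y_1},\alpha_{y_2},\alpha_y] = \alpha_{y_1}\circ\alpha_{y_2}^{-1}$ (using the heap structure on $\Aut_\Hp(K)$ recalled in the text). This immediately gives the Mal'tsev laws: for $[(k_1,y_1),(k_1,y_1),(k_3,y_3)]$ we have $[y_1,y_1,y]$ giving $\varphi=\alpha_{y_1}\alpha_{y_1}^{-1}=\id_K$, so the first component collapses to $[k_1,k_1,k_3]=k_3$ and the second to $[y_1,y_1,y_3]=y_3$; the identity $p(u,v,v)=u$ is symmetric. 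So the main content is associativity.

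For associativity I would compute $[[(k_1,y_1),(k_2,y_2),(k_3,y_3)],(k_4,y_4),(k_5,y_5)]$ and $[(k_1,y_1),(k_2,y_2),[(k_3,y_3),(k_4,y_4),(k_5,y_5)]]$ and check the two components separately. The $Y$-component is automatic: it reduces to associativity of $[-,-,-]$ on $Y$. For the $K$-component, writing $\varphi = \alpha_{y_1}\alpha_{y_2}^{-1}$ for the ``inner'' twist and $\psi = \alpha_{[y_1,y_2,y_3]}\alpha_{y_4}^{-1}$ for the ``outer'' one on the left side (and the corresponding twists on the right), one expands both sides using that each $\alpha_b$ is a heap automorphism of $K$, hence commutes with $[-,-,-]$, so that e.g. $\psi([k_1,\varphi(k_2),\varphi(k_3)]) = [\psi(k_1),\psi\varphi(k_2),\psi\varphi(k_3)]$. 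Then associativity of $[-,-,-]$ on $K$ plus the bookkeeping identity $\psi\circ\varphi$ (left side) $=$ (analogous composite on the right side), which follows from $\alpha$ being a heap morphism and from associativity in $Y$, closes the argument. This is exactly the kind of computation the paper says ``follow easily making use of the techniques presented in Remark~\ref{2.2}'', so I would present the reduction and then invoke that remark rather than grinding every bracket.

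For part (a), once $K\times Y$ is a heap, the map $k\mapsto (k,y)$ is a heap morphism $K\to K\times Y$ because on $K\times\{y\}$ the twist $\alpha_{[y,y,y]}=\alpha_y=\id_K$ disappears, so $[(k_1,y),(k_2,y),(k_3,y)] = ([k_1,k_2,k_3],y)$; it is clearly injective, giving the claimed isomorphism onto $K\times\{y\}$. Normality of $K\times\{y\}$ I would check via Lemma~\ref{3.3}(c): for $(a,b)\in K\times Y$ and $(k_1,y),(k_2,y)\in K\times\{y\}$, compute $[[(a,b),(k_1,y),(k_2,y)],(a,b),(k_1,y)]$ and observe its $Y$-component is $[[b,y,y],b,y]=y$ (using the Mal'tsev laws in $Y$), so the element lies in $K\times\{y\}$. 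For part (b), the map $k'\mapsto (k',b)$ from $Y$-indexed fibers: fixing $k\in K$, on $\{k\}\times Y$ the $K$-component of a triple is $[k,\varphi(k),\varphi(k)] = k$ by the Mal'tsev law $p(u,v,v)=u$ in $K$, so $[(k,y_1),(k,y_2),(k,y_3)] = (k,[y_1,y_2,y_3])$, showing $\{k\}\times Y$ is a subheap isomorphic to $Y$; and the map $f(a,b)=(k,b)$ is a heap endomorphism --- again because $f$ lands in $\{k\}\times Y$ where the operation is just the $Y$-operation in the second coordinate --- and visibly satisfies $f^2=f$. The main obstacle, as anticipated, is the $K$-component of associativity: keeping track of which composite of $\alpha$'s twists which argument, and seeing that the two sides agree is the one genuinely non-trivial verification; everything else degenerates pleasantly because the distinguished element $y$ kills the relevant twists.
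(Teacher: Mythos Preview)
Your outline is correct and follows essentially the same route as the paper's proof: direct verification of the Mal'tsev identities, a componentwise check of associativity (the $Y$-component trivial, the $K$-component reducing to the identity $\alpha_{[[y_1,y_2,y_3],y_4,y]}=\alpha_{[y_1,y_2,y]}\circ\alpha_{[y_3,y_4,y]}$, which holds because $\alpha$ is a heap morphism and $\alpha_y=\id$), normality of $K\times\{y\}$ via Lemma~\ref{3.3}(c), and the straightforward checks for part~(b). The paper carries out the associativity computation fully explicitly rather than isolating the composite-of-twists identity as you do, but the content is the same.

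One small caution: your remark that ``this is exactly the kind of computation the paper says follow easily making use of the techniques presented in Remark~\ref{2.2}'' is slightly off. That reference in the paper pertains to the \emph{preceding} proposition (Proposition~\ref{vhl}), and Remark~\ref{2.2} concerns rearrangement and cancellation in \emph{abelian} heaps, whereas here $K$ and $Y$ need not be abelian. Your actual argument does not rely on that remark at all---it uses only associativity in $K$, that each $\alpha_b$ is a heap automorphism, and the group structure on $\Aut_{\Hp}(K)$---so just drop the invocation of Remark~\ref{2.2} and state the twist identity $\psi=\varphi\circ\chi$ explicitly.
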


\begin{Proof}
{\rm (a)} Let us show that $K\times Y$ is a heap. For the Mal'tsev identities we have:
$$\begin{array}{l}
[(k_1,y_1),(k_1,y_1),(k_2,y_2)]=
([k_1,\alpha_{[y_1,y_1,y]}(k_1), \alpha_{[y_1,y_1,y]}(k_2)], [y_1,y_1,y_2 ] )=\\
\quad=([(k_1,\alpha_y(k_1),\alpha_y (k_2)],y_2) =([(k_1,k_1,k_2],y_2)=(k_2,y_2),\end{array}$$
and $$\begin{array}{l}
[(k_1,y_1),(k_2,y_2),(k_2,y_2)]=
([k_1,\alpha_{[y_1,y_2,y]}(k_2), \alpha_{[y_1,y_2,y]}(k_2)], [y_1,y_2,y_2])=\\
\quad=[k_1,y_1].\end{array}$$

Let us check associativity: 
\noindent$$\begin{array}{l}
[[(k_1,y_1),(k_2,y_2),(k_3,y_3)],(k_4,y_4),(k_5,y_5)]=\\
\left[ ( [ k_1 ,\alpha_{[y_1,y_2,y]}(k_2), \alpha_{[y_1,y_2,y]}(k_3)], [y_1,y_2,y_3]),(k_4,y_4),(k_5,y_5)\right]=\\
\left( 
 [[ k_1 ,\alpha_{[y_1,y_2,y]}(k_2), \alpha_{[y_1,y_2,y]}(k_3)],
 \alpha_{\left[ [y_1,y_2,y_3],y_4,y\right]}(k_4),
 \alpha_{\left[ [y_1,y_2,y_3],y_4,y \right]}(k_5)],\left[ [y_1,y_2,y_3 ],y_4,y_5 
\right] \right)=\\
\left( \left[ 
 [ k_1 ,\alpha_{[y_1,y_2,y]}(k_2), \alpha_{[y_1,y_2,y]}(k_3)],
 \alpha_{\left[ y_1,y_2,[y_3,y_4,y]\right]}(k_4),
 \alpha_{\left[ y_1,y_2,[y_3,y_4,y]\right]}(k_5)],\left[ y_1,y_2,[ y_3,y_4,y_5 
\right]\right]   \right ) =\\
\left( \left[ 
 k_1 ,\alpha_{[y_1,y_2,y]}(k_2), [\alpha_{[y_1,y_2,y]}(k_3),
 \alpha_{\left[ y_1,y_2,[y_3,y_4,y]\right]}(k_4),
 \alpha_{\left[ y_1,y_2,[y_3,y_4,y]\right]}(k_5)]],\left[ y_1,y_2, [y_3,y_4,y_5 
\right] \right] \right).
\end{array}$$

\smallskip

On the other hand,  

\noindent$\begin{array}{llllllllll}
\left[ (k_1,y_1),(k_2,y_2),[(k_3,y_3),(k_4,y_4),(k_5,y_5) ]\right]=&\\
\left[
(k_1,y_1),(k_2,y_2),( [k_3,\alpha_{[y_3,y_4,y]}(k_4), \alpha_{[y_3,y_4,y]}(k_5)], [y_3,y_4,y_5 ] ) \right]=&\\
 \left(
[ k_1,\alpha_{[y_1,y_2,y]}(k_2),\alpha_{[y_1,y_2,y]}([k_3,\alpha_{[y_3,y_4,y]}(k_4), \alpha_{[y_3,y_4,y]}(k_5)])],[y_1,y_2, [ y_3,y_4,y_5 ] ] \right)=&\\
 \left(
[ k_1,\alpha_{[y_1,y_2,y]}(k_2),[\alpha_{[y_1,y_2,y]}(k_3),\alpha_{[y_1,y_2,y]}\alpha_{[y_3,y_4,y]}(k_4),\alpha_{[y_1,y_2,y]} \alpha_{[y_3,y_4,y]}(k_5)]],\right. \\ \qquad \left. [y_1,y_2[ y_3,y_4,y_5 ] ] \right)=&\\
\left( \left[ k_1,\alpha_{[y_1,y_2,y]}(k_2),\right. \right. &\\
\qquad\left[ (\alpha(y_1)\circ \alpha(y_2)^{-1}\circ \alpha(y))(k_3),(\alpha(y_1)\circ \alpha(y_2)^{-1}\circ \alpha(y))\circ (\alpha(y_3)\circ \alpha(y_4)^{-1}\circ \alpha(y))(k_4)),\right. \\
\qquad(\alpha(y_1)\circ \alpha(y_2)^{-1}\circ \alpha(y))\circ (\alpha(y_3)\circ \alpha(y_4)^{-1}\circ \alpha(y))(k_5))] ],
\left. \left[ y_1,y_2[ y_3,y_4,y_5 ] \right] \right)= & \\
\left( \left[ k_1,\alpha_{[y_1,y_2,y]}(k_2),\right. \right.&\\
\qquad
 [(\alpha(y_1)\circ \alpha(y_2)^{-1}\circ \alpha(y))(k_3),
(\alpha(y_1)\circ \alpha(y_2)^{-1}\circ  \alpha(y_3)\circ \alpha(y_4)^{-1}\circ \alpha(y))(k_4), \\
\qquad\left. \alpha(y_1)\circ \alpha(y_2)^{-1}\circ \alpha(y_3)\circ \alpha(y_4)^{-1}\circ \alpha(y))(k_5)], [y_1,y_2[ y_3,y_4,y_5 ] ]\right)=&\\
( [ k_1,\alpha_{[y_1,y_2,y]}(k_2),
[\alpha_{[y_1,y_2,y]}(k_3),
\alpha_{[y_1,y_2,[y_3,y_4,y]]}(k_4)),
\alpha_{[y_1,y_2,[y_3,y_4,y]]}(k_5) ] ],[ y_1,y_2[ y_3,y_4,y_5 ] ] ).
\end{array}$

\bigskip

Now let us prove that $K \times \{y \}$ is a normal subheap of $K \times Y$: 
for every $(k_1,y),(k_2,y)$, $(k_3,y) \in K \times \{y\},$ we have that $$[(k_1,y),(k_2,y),(k_3,y)]= ([k_1, \alpha_{[y,y,y]}(k_2), \alpha_{[y,y,y]}(k_3)], [y,y,y])=([k_1,k_2, k_3],y).$$
By Lemma~\ref{3.3}(c) it is enough to show that 
for every $(k, y_1) \in K \times Y$ and $ (k_1,y), (k_2,y) \in K \times \{y\} $
we have  that $ [[(k,y_1), (k_1,y), (k_2,y)], (k,y_1),(k_1,y)]\in K \times \{y\}.$

Now $$\begin{array}{l}
 \left[ [(k,y_1), (k_1,y), (k_2,y)], (k,y_1),(k_1,y)\right]=\\
 \left[ ( [k, \alpha_{[y_1,y,y]}(k_1), \alpha_{[y_1,y,y]}(k_2)], [y_1,y,y] ), (k,y_1),(k_1,y)\right]=\\
 \left[ ( [k, \alpha_{y_1}(k_1), \alpha_{y_1}(k_2)], y_1 ),(k,y_1),(k_1,y)\right]=\\
 ( \left[ [ k, \alpha_{y_1}(k_1), \alpha_{y_1}(k_2)],\alpha_{[y_1,y_1,y]}(k),\alpha_{[y_1,y_1,y]}(k_1)],[y_1,y_1,y]\right] )=\\
 ( [ [ k, \alpha_{y_1}(k_1), \alpha_{y_1}(k_2)],\alpha_{y}(k),\alpha_{y}(k_1)],y)\in K\times \{y\}. 
    \end{array}$$
We leave the rest of the proof to the reader.
\end{Proof}

It is clear how to define the (direct) product of two heaps. It corresponds to direct products of groups.

\begin{prop}\label{5.5} Let $X$ be a heap, $\omega$ be a congruence on $X$, $Y$ be a subheap of $X$, and suppose $X=\omega\rtimes Y$. Fix an element $e\in Y$ and set $K:=[e]_\omega$. Let $f$ be the corresponding idempotent heap morphisms of $X$.
Then the following conditions are equivalent:

{\rm (a)} The subheap $Y$ of $X$ is normal.
    
{\rm (b)} $X= K\times Y$ (direct product of heaps).

{\rm (c)} 
$[y,e,k]=[k,e,y]$ for every $y\in Y$ and every $k\in K$. 

{\rm (d)} There is an idempotent heap endomorphism $g$ of $X$ whose image is $K$ and such that $g^{-1}(e)=Y$.

{\rm (e)} The heap morphism $\alpha\colon Y\to \Aut_{\Hp}(K)$ is constantly equal to the identity mapping of $K$.
\end{prop}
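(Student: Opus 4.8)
The plan is to prove the equivalences by translating everything into the group $(X,b_e)$, where $X=\omega\rtimes Y$ means precisely that $(X,b_e)$ is the (inner) semidirect product of the normal subgroup $K=[e]_\omega$ and the subgroup $Y$, and then using the elementary theory of semidirect products of groups together with the dictionaries supplied by Corollary~\ref{3.4}, Proposition~\ref{eq}, Theorem~\ref{4.1} and Proposition~\ref{vhl}. The key observation is that, in $(X,b_e)$, the conjugation action $\alpha\colon Y\to\Aut_{\Gp}(K)$ is exactly the action described before Proposition~\ref{vhl}, namely $\alpha_y(k)=yky^{-1}$, and that a semidirect product of groups is a direct product if and only if this action is trivial, if and only if every element of $K$ commutes with every element of $Y$, if and only if $Y$ is normal. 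This immediately pairs (a), (b), (c) and (e) once the heap-theoretic statements are rewritten in $(X,b_e)$.

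The steps I would carry out, in order, are: \textbf{(c)$\Leftrightarrow$(e)} First, rewrite $\alpha_y(k)=[y,e,[k,y,e]]$ using the identity $[[a,b,c],d,f]=[a,[d,c,b],f]=[a,b,[c,d,e]]$ from Remark~\ref{2.2} to get $\alpha_y(k)=yky^{-1}$ in $(X,b_e)$; then $\alpha$ is constantly $\id_K$ iff $yky^{-1}=k$ for all $y\in Y$, $k\in K$, iff $[y,e,k]=[k,e,y]$ after translating the group identity $yk=ky$ back into the ternary operation. \textbf{(b)$\Leftrightarrow$(e)} In $(X,b_e)$, an inner semidirect product $K\rtimes Y$ is a direct product iff the conjugation action is trivial; and the heap direct-product structure of Proposition~\ref{boh}'s outer construction specialises, when $\alpha$ is trivial, to the componentwise ternary operation, which is exactly the product heap $K\times Y$. \textbf{(a)$\Leftrightarrow$(e)} By Corollary~\ref{3.4}, $Y$ is a normal subheap iff $Y$ is a normal subgroup of $(X,b_e)$; in a semidirect product $K\rtimes Y$, the complement $Y$ is normal iff $K$ and $Y$ commute elementwise iff the action $\alpha$ is trivial. \textbf{(d)$\Leftrightarrow$(b)} If $X=K\times Y$ then the second projection $g(k,y)=(e,y)$ (identifying via the product decomposition) is the desired idempotent endomorphism with image the copy of $Y$ and fibre over $e$ equal to $K$ — wait, I must be careful about which factor plays which role: I would instead note that (d) says there is an idempotent heap endomorphism $g$ with $g(X)=K$ and $g^{-1}(e)=Y$, which by Proposition~\ref{boh} (applied with the roles of $Y$ and $K$ interchanged, $e\in K$ as well since $e\in Y\cap K=\{e\}$ only if\ldots) means $X=\ker g\rtimes K$ with $\ker g$ the congruence whose class of $e$ is $Y$; combined with the original $X=\omega\rtimes Y$ this forces the two complementary pieces to commute, i.e.\ a direct product. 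So (d) is equivalent to the existence of a \emph{second} semidirect decomposition with the factors swapped, and two mutually-complementary semidirect decompositions of a group amount precisely to a direct-product decomposition.

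The main obstacle I anticipate is step \textbf{(d)$\Leftrightarrow$(b)}: the subtlety is that $Y$ being a \emph{subheap} does not by itself make it (the class of any of its elements under) a congruence, so to speak of $g^{-1}(e)=Y$ one must first know $Y$ is a normal subheap, and the cleanest route is to show (d)$\Rightarrow$(a) directly — from an idempotent $g$ with $g^{-1}(e)=Y$ one gets that $Y=[e]_{\ker g}$ is a congruence class, hence a normal subheap by Proposition~\ref{eq} — and then close the loop through the already-established equivalences, proving (b)$\Rightarrow$(d) by exhibiting $g$ as the projection onto the $K$-factor of the heap $K\times Y$ (which is an idempotent heap endomorphism by the analogue of Proposition~\ref{boh}(b), its fibre over $e$ being $\{e\}\times Y\cong Y$). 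Everything else is a routine unwinding of the group-theoretic characterisations of semidirect versus direct products, with Remark~\ref{2.2} handling the bracket-manipulations needed to pass between $[-,-,-]$ and $\cdot_e$; I would state those translations once and refer back to them rather than repeat the computations.
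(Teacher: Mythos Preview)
Your approach is essentially the same as the paper's: translate everything into the group $(X,b_e)$ and use the elementary fact that, for a group written as an inner semidirect product $K\rtimes Y$, the complement $Y$ is normal iff $K$ and $Y$ commute elementwise iff the product is direct; this gives (a)$\Leftrightarrow$(b)$\Leftrightarrow$(c), and (c)$\Leftrightarrow$(e) is the immediate rewriting of $\alpha_y(k)=yky^{-1}$. For (d), the paper does exactly what you landed on after your hesitation: (d)$\Rightarrow$(a) because $Y=g^{-1}(e)=[e]_{\ker g}$ is a congruence class hence a normal subheap, and (a)$\Rightarrow$(d) by taking $g$ to be the idempotent endomorphism with image $K$ and kernel $\sim_Y$ (this is your ``projection onto the $K$-factor'', just phrased via Proposition~\ref{boh} applied with the roles swapped). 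Your worry about (d) was not a real obstacle---the argument you sketched at the end is precisely the paper's, and the ``two complementary semidirect decompositions'' detour you initially considered is unnecessary.
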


\begin{Proof} (a)${}\Leftrightarrow{}$(b)${}\Leftrightarrow{}$(c) For a semidirect-product decomposition of a group $G$ as a semidirect product of a subgroup $H$ of $G$ and a normal subgroup $N$ of $G$, one has that $G$ is the direct product of $H$ and $N$ if and only if $H$ is also normal in $G$, if and only if the elements of $H$ commute with the elements of $N$. Applying this to the group $(X,b_e)$, one gets that (b)${}\Leftrightarrow{}$(a)${}\Leftrightarrow{}$(c).

(a)${}\Rightarrow{}$(d) If $Y$ is a normal subheap of $X$, let $g$ be the idempotent heap endomorphism of $X$ whose image is $K$ and whose kernel is $\sim_Y$.

(d)${}\Rightarrow{}$(a) If $g$ is an idempotent heap endomorphism of $X$ for which $g^{-1}(e)=Y$, then $Y$ is a normal subheap of $X$.

(c)${}\Leftrightarrow{}$(e) is trivial
    \end{Proof}

In particular, if the equivalent conditions of the previous Proposition~\ref{5.5} hold, then the commutator $[\sim_Y,\omega]$ is the conguence $=$ on $X$. 

\section{Left near-trusses}

We will now follow the content of \cite[Section~2]{Brebetween}. A {\em left near-truss} $(X,[-,-,-],\cdot{})$ is a set $X$ endowed with a ternary operation $[-,-,-]$ and a binary operation $\cdot$, such that $(X,[-,-,-])$ is a heap, $(X,\cdot{})$ is a semigroup, and {\em  left distributivity} holds, that is, $$x\cdot[y,z,w]=[x\cdot y,x\cdot z,x\cdot w]$$  for every $x,y,z,w\in X$. Similarly for {\em right near-trusses}, where left distributivity is replaced by {\em right distributivity}: $[y,z,w]\cdot x=[y\cdot x,z\cdot x,w\cdot x]$ for every $x,y,z,w\in X$. Clearly, the category of  left near-trusses is isomorphic to the category of right near-trusses, it suffices to associate to any left near-truss $(X,[-,-,-],\cdot{})$ its opposite right near-truss $(X,[-,-,-],\cdot{}^{\op})$.

\begin{exams}\label{6.2*} (1) Let $(X,[-,-,-])$ be a heap and let $$M(X):=\{\,f\mid f \colon X\to X\,\}$$ be the set of all mappings from the set $X$ to itself. Define a ternary operation $[-,-,-]$ on $M(X)$ setting, for every $f,g,h\in M(X)$, $[f,g,h](x)=[f(x),g(x),h(x)]$ for all $x\in~X$. Then $(M(X),[-,-,-])$ is also a heap (it is the direct product of $|X|$ copies of the heap $(X,[-,-,-])$). Taking the composition of mappings as the binary operation $\cdot$, $M(X)$ becomes a {\em right} near-truss. 

(2) Let $(N,+,\cdot{})$ be a left near-ring. Define a ternary operation $[-,-,-]\colon N\times N\times N\to N$ on $N$ setting $[x,y,z]=x-y+z$ for every $x,y,z\in N$. Then $(N,[-,-,-],\cdot{})$ is a left near-truss. 

(3) Let $(B,*,\circ)$ be a left skew brace \cite{BFP}. Define a ternary operation $[-,-,-]\colon B\times B\times B\to B$ on $B$ setting $[x,y,z]=x*(y^{-*})*z$ for every $x,y,z\in B$. Then $(B,[-,-,-],\circ)$ is a left near-truss. \begin{Proof} In order to show that $(B,[-,-,-],\circ)$ is a left near-truss it suffices to check left distributivity. That is, that $a\circ[b,c,d]=a\circ(b*(c^{-*})*d)=(a\circ b)*(a^{-*})*(a\circ (c^{-*}))*(a^{-*})*(a\circ d)$ is equal to $[a\circ b,a\circ c,a\circ d]=(a\circ b)*(a\circ c)^{-*}*(a\circ d)$. This is equivalent to proving that $(a^{-*})*(a\circ (c^{-*}))*(a^{-*})=(a\circ c)^{-*}$. Now $(a^{-*})*(a\circ (c^{-*}))*(a^{-*})=\lambda_a(c^{-*})*(a^{-*})=(\lambda_a(c))^{-*}*(a^{-*})=(a*\lambda_a(c))^{-*}=(a\circ c)^{-*}$, as we wanted to show.
    \end{Proof}\end{exams}

The right near-truss $M(X)$ of Example  \ref{6.2*}(1) is particularly interesting because:

\begin{thm}
    Every right near-truss is isomorphic to a subnear-truss of $M(X)$ for some heap $X$.
\end{thm}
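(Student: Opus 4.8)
The plan is to imitate the classical Cayley theorem for semigroups, where every semigroup embeds in the monoid of self-maps of its underlying set via the left regular representation; here the ``underlying set'' must be replaced by the underlying \emph{heap}, so that the composition-semigroup structure of $M(X)$ and the pointwise heap structure of $M(X)$ are both respected. Concretely, given a right near-truss $(T,[-,-,-],\cdot{})$, I would first form the heap $X:=(T,[-,-,-])$, consider the right near-truss $M(X)$ of Example~\ref{6.2*}(1), and define $\Phi\colon T\to M(X)$ by $\Phi(a)=\lambda^a$, where $\lambda^a\colon X\to X$ is left multiplication by $a$, that is, $\lambda^a(x)=a\cdot x$ for every $x\in X$. (If one wants to be safe about the empty near-truss or about the absence of an identity, one can instead embed $T$ into $M(X)$ with $X:=T^1$ the heap $T$ with an external element adjoined, or simply note the statement is vacuous/trivial in degenerate cases; but the naive $\lambda$ should already work.)

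The key steps are then: (1) check that each $\lambda^a$ is a heap endomorphism of $X$ — this is exactly the left distributivity axiom $a\cdot[y,z,w]=[a\cdot y,a\cdot z,a\cdot w]$, so $\Phi$ does land in $M(X)$ (even in $\End_{\Hp}(X)\subseteq M(X)$, though we only need it to land in the underlying set of $M(X)$); (2) check that $\Phi$ is a morphism of the ternary (heap) operations: $\Phi([a,b,c])$ sends $x$ to $[a,b,c]\cdot x$, while $[\Phi(a),\Phi(b),\Phi(c)]$ sends $x$ to $[a\cdot x,b\cdot x,c\cdot x]$, and these agree by \emph{right} distributivity $[a,b,c]\cdot x=[a\cdot x,b\cdot x,c\cdot x]$ — this is the one place the ``right'' orientation matters, which is why $M(X)$ is a right near-truss; (3) check that $\Phi$ respects the binary operation: $\Phi(a\cdot b)=\lambda^{ab}$ and $\Phi(a)\cdot\Phi(b)=\lambda^a\circ\lambda^b$, and for every $x$ we have $\lambda^{ab}(x)=(a\cdot b)\cdot x=a\cdot(b\cdot x)=\lambda^a(\lambda^b(x))$ by associativity of $\cdot{}$ in the semigroup $(T,\cdot{})$; (4) conclude $\Phi$ is a near-truss homomorphism, so its image is a subnear-truss of $M(X)$, and it remains only to obtain injectivity.

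For injectivity the obvious issue is that a near-truss need not have a multiplicative identity, so $\lambda^a=\lambda^b$ does not immediately give $a=b$ by evaluating at $1$. The clean fix, and the step I expect to be the only real obstacle, is to replace $X$ by the heap $\widetilde X:=T\sqcup\{*\}$ obtained by freely adjoining one extra point to the heap $(T,[-,-,-])$ (using Example~\ref{111}(c): pick a group $(G,\circ)$ with $T=p(G)$ and let $\widetilde X=p(G\times\Z)$ or, more elementarily, just take the disjoint union of heaps and note every near-truss $T$ acts on $\widetilde X$ by extending $a\cdot{}$ on $T$ and setting $a\cdot *:=a$), so that the representation becomes faithful because $\lambda^a(*)=a$ distinguishes distinct elements. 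Once faithfulness is arranged, $T\cong\Phi(T)$ is the desired subnear-truss of $M(\widetilde X)$. I would present the argument by first giving the homomorphism $\Phi$ into $M(X)$ with $X=T$ and observing it is injective as soon as $T$ has a left-cancellable element or an identity, and then handling the general case by the adjoined-point trick; the bulk of the verification (distributivities, associativity) is the routine calculation in steps (1)--(3) and need not be spelled out in detail.
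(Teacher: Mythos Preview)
Your approach is the same Cayley-type left regular representation as the paper's, and steps (2) and (3) are exactly right. Two points need attention.

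First, step (1) is confused: a \emph{right} near-truss satisfies only right distributivity $[a,b,c]\cdot x=[ax,bx,cx]$, so left multiplication $\lambda^a$ is in general \emph{not} a heap endomorphism and the ``left distributivity axiom'' you invoke is simply not available. This is harmless, since $M(X)$ consists of \emph{all} self-maps of $X$ and, as you yourself note, you only need $\lambda^a\in M(X)$; just drop the claim about $\End_{\Hp}(X)$.

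Second, the injectivity fix is not well defined as written. You cannot in general adjoin a single element to a heap and obtain a heap (for instance a $3$-element heap has no $4$-element heap extension, since $\Z/3\Z$ is not a subgroup of any group of order~$4$), and the set-theoretic ``disjoint union of heaps'' is not a heap. What does work, and what the paper actually does, is precisely your parenthetical suggestion $G\times\Z$: take the heap $X=T\times\Z$ with the product heap structure, identify $T$ with $T\times\{1\}$, and set $\lambda^a(t,1)=(a\cdot t,1)$ while $\lambda^a(t,n)=(a,1)$ for $n\ne1$. The checks in your (2)--(3) go through on the complement of $T\times\{1\}$ because $\lambda^a$ is the constant $(a,1)$ there (so $[\lambda^a,\lambda^b,\lambda^c](t,n)=([a,b,c],1)=\lambda^{[a,b,c]}(t,n)$ and $\lambda^a\circ\lambda^b(t,n)=\lambda^a(b,1)=(ab,1)=\lambda^{ab}(t,n)$), and injectivity is immediate by evaluating at any $(t,0)$. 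So your plan becomes a complete proof once the vague ``adjoin $*$'' is replaced by this explicit enlargement --- or indeed by any heap properly containing $T$ as a subheap, with $\lambda^a$ defined as the constant $a$ on the complement.
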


\begin{Proof} Let $(Y,[-,-,-] ,\cdot)$ be a right near-truss and $(X,[-,-,-], \circ)$ be a right near-truss  properly containing the right near truss $(Y,[-,-,-], \cdot )$. 
For example $X$ could be the direct product $Y\times\Z$ with the operations $[(y,z),(y'z'),(y'',z'')]=([y,y',y''],z-z'+z''),$ $(y,z)\circ (y',z')=(y\cdot y',z\cdot z').$

  Consider the mapping $\lambda\colon (Y,[-,-,-],\cdot)\to (M(X),[-,-,-],\circ)$ defined by $\lambda\colon y\mapsto \lambda_{(y,1)}$, where $\lambda_{(y,1)}\colon X\to X$ is defined, for every $(y_1, z)\in X$, by

$$\lambda_{(y,1)}\colon (y_1,z)\mapsto \left\{\begin{array}{ll} (y\cdot y_1 ,1) & \mbox{\rm if $(y_1,z)\in Y\times \{1 \}$,}  \\
(y,1) & \mbox{\rm if $(y_1,z)\in  X\setminus (Y\times \{ 1\} )$.}
\end{array}\right.$$

It is easily checked that the mapping $\lambda$ is injective and is a right near-truss morphism.
\end{Proof}

\begin{thm}\label{6.2} Let $(X,[-,-,-],\cdot{})$ be a left near-truss, and fix an element $y\in X$. Then $(X,b_y,\cdot{})$ is an algebra (in the sense of Universal Algebra) in which $(X,b_y)$ is a group $(X,*_y)$, $(X,\cdot{})$ is a semigroup, and $w(x*_yz)=(wx)*_y(wy)^{-*}*_y(wz)$ for every $x,y,z,w\in X$. Here $(wy)^{-*}$ denotes the inverse of the element $w\cdot y$ in the group $(X,b_y)=(X,*_y)$.\end{thm}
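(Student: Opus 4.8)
The plan is to unpack the definitions and reduce everything to identities that are already available. Recall that $b_y(x,z)=p(x,y,z)=[x,y,z]$, so by Theorem~\ref{X} the magma $(X,b_y)$ is a group; write $*_y$ for $b_y$ to emphasize this, with identity $y$ and inverse of $w\cdot y$ denoted $(wy)^{-*}$. The three assertions are: (i) $(X,*_y)$ is a group, (ii) $(X,\cdot{})$ is a semigroup, and (iii) the mixed distributivity law $w(x*_yz)=(wx)*_y(wy)^{-*}*_y(wz)$ holds. Item (i) is precisely Theorem~\ref{X} applied at the element $y$. Item (ii) is immediate, since $(X,\cdot{})$ being a semigroup is part of the definition of a left near-truss. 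So the entire content is item (iii).

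For item (iii) I would simply compute both sides in terms of the ternary operation $[-,-,-]$ and invoke left distributivity. By definition $x*_yz=[x,y,z]$, so the left-hand side is $w\cdot[x,y,z]$, which by left distributivity of the near-truss equals $[w\cdot x,\,w\cdot y,\,w\cdot z]$. On the other side, $(wy)^{-*}$ is the inverse of $wy$ in the group $(X,*_y)$, and by the proof of Theorem~\ref{X} this inverse is $p(wy,y,wy)$... but more useful here is to note that in any group written via a heap, $a*_y b^{-*}*_y c$ is exactly $[a,b,c]$ whenever $b^{-*}$ is the $*_y$-inverse of $b$; indeed $[a,b,c]=b_y$-product expressing $a\cdot_y(b^{-1}\cdot_y c)$, and since $[a,b,c]$ with the middle slot inverted is the group ternary operation $a b^{-1} c$, we get $(wx)*_y(wy)^{-*}*_y(wz)=[wx,\,wy,\,wz]$. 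Hence the right-hand side also equals $[w\cdot x,\,w\cdot y,\,w\cdot z]$, and the two sides coincide.

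More carefully, the one identity I need is that for elements $a,b,c$ of the group $(X,*_y)$ one has $a *_y b^{-*} *_y c = [a,b,c]$; this follows because $(X,*_y)$ is isomorphic (Theorem~\ref{X}) to the group $S_X$ of translations, where the ternary operation $[-,-,-]$ corresponds to $g,h,k\mapsto g h^{-1} k$, the standard heap-from-a-group construction of Example~\ref{111}(c); equivalently one checks directly that $[a,b,c]*_y b = a *_y b^{-*} *_y c *_y b$ reduces via the Mal'tsev and associativity axioms, but it is cleanest to cite the group structure. Applying this with $a=wx$, $b=wy$, $c=wz$ gives the claim.

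I do not anticipate a genuine obstacle: the statement is essentially a repackaging of the near-truss axioms after the change of viewpoint from the ternary operation to the binary group operation, and the only nontrivial input, namely that each $(X,b_y)$ is a group, is Theorem~\ref{X}. The one point requiring a line of care is matching the bracket notation $[-,-,-]$ with the group expression $a b^{-1} c$ in $(X,*_y)$, i.e.\ that inverting the middle argument turns the heap operation into the group's "$ab^{-1}c$"; this is exactly Example~\ref{111}(c) read in reverse and can be stated without further computation.
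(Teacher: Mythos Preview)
Your proposal is correct and follows essentially the same route as the paper. Both arguments reduce the identity to left distributivity $w[x,y,z]=[wx,wy,wz]$ together with the fact that in the group $(X,*_y)$ one has $[a,b,c]=a*_yb^{-*}*_yc$; the paper verifies this last identity inline by inserting $[wx,y,y]$ and $[y,y,wz]$ and reassociating until $[y,wy,y]=(wy)^{-*}$ appears, whereas you invoke it as the heap--group correspondence of Example~\ref{111}(c). One small slip: the inverse of $wy$ in $(X,*_y)$ is $[y,wy,y]$, not $p(wy,y,wy)$ (the latter is the inverse of $y$ in $(X,b_{wy})$), but since you abandon that line immediately it does not affect your argument.
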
 

{\sc Proof.}
    $$\begin{array}{l}w(x*_yz)=w[x,y,z]=[wx,wy,wz]=[[wx,y,y], wy,[y,y,wz]]= \\ \qquad =[wx,y,[y, wy,[y,y,wz]]]=[wx,y,[[y, wy,y],y,wz]]= \\ \qquad =(wx)*_y((wy)^{-*}*_y(wz))=(wx)*_y(wy)^{-*}*_y(wz).\qquad\qquad\rule{1ex}{1ex}\end{array}$$

\bigskip

For every $x,y$ in a left near-truss $(X,[-,-,-],\cdot{})$, left multiplication by $x$, $\lambda_x\colon X\to X$, $\lambda_x\colon z\in X\to x\cdot z$, is a group morphism $\lambda_x\colon (X,b_y)\to (X,b_{xy})$.

If we write the operation in the group  $(X,b_y)$ additively, the equality given by
left distributivity $x\cdot[z,y,u]=[x\cdot z,x\cdot y,x\cdot u]$ becomes 
$x(z+u)=xz-xy+xu$ for every $x,z,u\in X$. The two most interesting cases of this equality are those for which $xy=0_{(X,b_y)}=y$ for every $x\in X$, and $xy=x$ for every $x\in X$. It easily follows that:

\begin{Lemma}\label{6.3} Let $(X,[-,-,-],\cdot{})$ be a left near-truss and $y$ be a fixed element of $X$.

{\rm (a)} If $y$ is a right zero for the semigroup $(X,\cdot{})$ (that is, $xy=y$ for every $x\in X$), then $(X, b_y,\cdot )$ is a left near-ring.

{\rm (b)} If $(X,\cdot{})$ is a group and $y$ is its identity, then $(X, b_y,\cdot{})$ is a left skew brace.\end{Lemma}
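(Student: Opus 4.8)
The plan is to obtain both statements as immediate specializations of Theorem~\ref{6.2}, which already packages exactly the structure needed. For a fixed $y\in X$, Theorem~\ref{6.2} (together with Theorem~\ref{X}) tells us that $(X,b_y,\cdot{})=(X,*_y,\cdot{})$ is an algebra in which $(X,*_y)$ is a group, $(X,\cdot{})$ is a semigroup, and the ``skew'' left distributivity
$$w(x*_yz)=(wx)*_y(wy)^{-*}*_y(wz)$$
holds for all $w,x,z\in X$, where $(wy)^{-*}$ denotes the inverse of $wy$ in the group $(X,*_y)$. The whole proof reduces to evaluating the correction term $(wy)^{-*}$ under the two hypotheses.

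For part (a), I would assume $xy=y$ for every $x\in X$. Since $y$ is the identity element $0_{(X,*_y)}$ of the group $(X,*_y)$, the hypothesis says $wy=y=0_{(X,*_y)}$ for every $w$, hence $(wy)^{-*}=0_{(X,*_y)}$, and the identity above collapses to $w(x*_yz)=(wx)*_y(wz)$. Writing $+$ for $*_y$, this is ordinary left distributivity $w(x+z)=wx+wz$; combined with the fact that $(X,+)$ is a group and $(X,\cdot{})$ is a semigroup, this is precisely the definition of a left near-ring, consistently with Example~\ref{6.2*}(2) (in particular no commutativity of $+$ is required, and the two-sidedness $w\cdot 0=0$ is automatic, since $w\cdot 0=w(0+0)=w\cdot 0+w\cdot 0$). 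Thus $(X,b_y,\cdot{})$ is a left near-ring.

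For part (b), I would assume $(X,\cdot{})$ is a group with identity $y$, so that $wy=w$ for every $w\in X$. Then $(wy)^{-*}=w^{-*}$ is the inverse of $w$ in $(X,*_y)$, and the identity of Theorem~\ref{6.2} becomes $w(x*_yz)=(wx)*_y w^{-*} *_y(wz)$, which is exactly the compatibility axiom of a left skew brace with additive group $(X,*_y)$ and multiplicative group $(X,\cdot{})$ in the form recalled in Example~\ref{6.2*}(3) (where a left skew brace $(B,*,\circ)$ satisfies $a\circ(x*z)=(a\circ x)*(a^{-*})*(a\circ z)$). Since $(X,*_y)$ is a group by Theorem~\ref{X} and $(X,\cdot{})$ is a group by hypothesis, $(X,b_y,\cdot{})$ is a left skew brace.

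I do not expect a genuine obstacle: the content is entirely in Theorem~\ref{6.2}, and what remains is only to match conventions — that ``left near-ring'' means a (not necessarily abelian) additive group with a semigroup multiplication satisfying left distributivity alone, as in Example~\ref{6.2*}(2), and that ``left skew brace'' means the axioms of Example~\ref{6.2*}(3) — and to note the double role of the distinguished element $y$: in (a) it is simultaneously the additive $0$ and a (necessarily two-sided) zero for multiplication, and in (b) it is simultaneously the additive $0$ and the multiplicative $1$.
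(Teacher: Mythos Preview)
Your proof is correct and follows exactly the paper's approach: the paper derives the identity $x(z+u)=xz-xy+xu$ from Theorem~\ref{6.2} (written additively in $(X,b_y)$) and then simply says ``It easily follows that'' before stating Lemma~\ref{6.3}, leaving the two specializations $xy=y$ and $xy=x$ implicit. Your write-up makes those specializations explicit in the same way; the only quibble is the phrase ``two-sidedness $w\cdot 0=0$'', which is a slight misnomer (you only verify $w\cdot 0=0$, and in any case this is already the hypothesis that $y$ is a right zero).
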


\begin{exam}\label{466} Let $(R,+,\cdot{})$ be any ring with identity. Define $[x,y,z]=x-y+z$. Then $(R, [-,-,-],\cdot{})$ is a left near-truss, $(R,b_0,\cdot{})\cong (R,
+,\cdot{})$ is a ring, and $(R,b_1,\cdot{})\cong (R,+,\circ)$, where $\circ$ denotes the Jacobson multiplication on the ring $R$, defined by $x\circ y=x+y+xy$ for every $x,y\in R$.\end{exam}

The appearance of left skew braces into the picture in Example~\ref{6.2*}(3) and in Lemma~\ref{6.3} suggests the existence, for a left near-truss $(X,[-,-,-],\cdot{})$, of an action of the semigroup $(X,\cdot{})$ on each group $(X,b_y)$, that is, of a natural semigroup morphism $$\lambda\colon (X,\cdot{}) \to\End_{\Gp}(X, b_y)$$ into the endomorphism semigroup $\End_{\Gp}(X, b_y)$ of the group $(X, b_y)$. More precisely:

\begin{Lemma}\label{Bachi} Let $(X,[-,-,-],\cdot{})$ be a left near-truss and $y$ be a fixed element of $X$. Denote by $*$ the operation in the group $(X,b_y)$ and by $z^{-*}$ the inverse of any element $z\in X$ in the group $(X,b_y)$. Then $\lambda^y\colon (X,\cdot{})\to\End_{\Gp}(X, b_y)$, given by $\lambda^y\colon x\mapsto\lambda_x^y$, where $\lambda_x^y(z)=(xy)^{-*}* (xz)$ for every $x,z\in X$, is a semigroup morphism.\end{Lemma}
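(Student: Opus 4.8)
The statement asks me to verify two things: that each $\lambda_x^y$ is a group endomorphism of $(X,b_y)$, and that the assignment $x\mapsto\lambda_x^y$ respects the semigroup operation, i.e.\ $\lambda_{xx'}^y=\lambda_x^y\circ\lambda_{x'}^y$. I would treat these in turn, working entirely inside the group $(X,*_y)$ of Theorem~\ref{6.2}, where left distributivity takes the clean additive-style form $w(u*_yv)=(wu)*_y(wy)^{-*}*_y(wv)$.

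\textbf{Step 1: $\lambda_x^y$ is a group endomorphism.} Fix $x$ and write $*$ for $*_y$. I need $\lambda_x^y(u*v)=\lambda_x^y(u)*\lambda_x^y(v)$ and $\lambda_x^y(y)=y$ (the latter is immediate: $\lambda_x^y(y)=(xy)^{-*}*(xy)=y$, the identity of the group). For the multiplicativity, I compute $\lambda_x^y(u*v)=(xy)^{-*}*\bigl(x(u*v)\bigr)$, then apply the left-distributivity identity of Theorem~\ref{6.2} to rewrite $x(u*v)=(xu)*(xy)^{-*}*(xv)$. This gives $\lambda_x^y(u*v)=(xy)^{-*}*(xu)*(xy)^{-*}*(xv)=\lambda_x^y(u)*\lambda_x^y(v)$, exactly as required. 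So this step is a one-line substitution once Theorem~\ref{6.2} is invoked; no commutativity of the group is needed because the factors line up in the right order.

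\textbf{Step 2: functoriality on the semigroup.} Fix $x,x'\in X$ and an arbitrary $z\in X$. I want $\lambda_{x\cdot x'}^y(z)=\lambda_x^y\bigl(\lambda_{x'}^y(z)\bigr)$. The right-hand side is $(xy)^{-*}*\bigl(x\cdot\lambda_{x'}^y(z)\bigr)=(xy)^{-*}*\bigl(x\cdot((x'y)^{-*}*(x'z))\bigr)$. Applying Theorem~\ref{6.2}'s identity again to $x\cdot\bigl((x'y)^{-*}*(x'z)\bigr)$ turns it into $\bigl(x(x'y)^{-*}\bigr)*(xy)^{-*}*(x(x'z))$; since $\lambda_x^y$ is a group endomorphism by Step~1, $x(x'y)^{-*}$ relates to $\lambda_x^y$ of an inverse, but it is cleaner to argue directly: using that $\lambda_x^y$ is a homomorphism, $(xy)^{-*}*\bigl(x((x'y)^{-*})\bigr) = \lambda_x^y((x'y)^{-*})=\bigl(\lambda_x^y(x'y)\bigr)^{-*}=\bigl((xy)^{-*}*(x\cdot x'y)\bigr)^{-*}$, and by associativity of $\cdot$ one has $x\cdot x'y=(xx')y$. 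Combining the pieces and cancelling, the whole expression collapses to $((xx')y)^{-*}*((xx')z)=\lambda_{xx'}^y(z)$. I would organize this as a single displayed chain of equalities, being careful to cite associativity of $\cdot$ for $(x\cdot x')\cdot y=x\cdot(x'\cdot y)$ and $(x\cdot x')\cdot z=x\cdot(x'\cdot z)$ at the appropriate point, and to cite Step~1 for pulling $(\cdot)^{-*}$ through $\lambda_x^y$.

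\textbf{Main obstacle.} There is no deep obstacle; the content is bookkeeping. The one place to be attentive is that the group $(X,b_y)$ is \emph{not} assumed abelian, so when I expand $x(u*v)$ via Theorem~\ref{6.2} I must keep the middle term $(xy)^{-*}$ exactly where it appears and never commute factors past it. The cleanest route is to establish Step~1 first and then use the homomorphism property of $\lambda_x^y$ to shorten Step~2, rather than manipulating raw products; this avoids repeated appeals to left distributivity. I would end the proof with the displayed equality $\lambda_{xx'}^y(z)=\lambda_x^y(\lambda_{x'}^y(z))$ holding for all $z$, hence $\lambda^y$ is a semigroup morphism.
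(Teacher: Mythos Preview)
Your proposal is correct and follows essentially the same route as the paper: both establish that each $\lambda_x^y$ is a group endomorphism via the left-distributivity identity of Theorem~\ref{6.2}, then expand $\lambda_x^y\circ\lambda_{x'}^y$ using that identity and reduce to $\lambda_{xx'}^y$. The only cosmetic difference is that where the paper re-derives the auxiliary identity $(xx'y)^{-*}*(xy)=(xy)^{-*}*x((x'y)^{-*})$ from scratch via $xy=x((x'y)*(x'y)^{-*})$, you obtain the same fact by invoking Step~1 (homomorphisms preserve inverses), which is a slightly cleaner packaging of the same computation.
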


\begin{Proof} Left distributivity $x[z, y,u]=[xz, xy,xu]$ can be rewritten, in the group $(X,b_y)$, as $x(z*u)=(xz)*(xy)^{-*}* (xu)$. Multiplying by $(xy)^{-*}$ on the left, we get that $$(xy)^{-*}*x(z*u)=(xy)^{-*}*(xz)*(xy)^{-*}* (xu).$$ In the notation of the statement of the Lemma, this identity can be rewritten as $\lambda_x^y(z*u)=\lambda_x^y(z)*\lambda_x^y(u)$, so that each $\lambda_x^y$ is a group endomorphism of $(X,b_y)$. Hence $\lambda^y$ is a mapping of $X$ into $\End_{\Gp}(X, b_y)$. To conclude the proof, we must show that $\lambda^y$ is a semigroup morphism, that is, that $\lambda^y_{xt}=\lambda^y_{x}\circ \lambda^y_{t}$. Now $\lambda^y_{xt}(z)=(xty)^{-*}* (xtz)$ and $$\begin{array}{l}
(\lambda^y_{x}\circ \lambda^y_{t})(z)=\lambda^y_{x}((ty)^{-*}* (tz))= \\ \qquad =(xy)^{-*}* (x((ty)^{-*}* (tz)))=(xy)^{-*}* (x(ty)^{-*})*(xy)^{-*}* (xtz).\end{array}\label{13}$$ We have that $xy=x((ty)*(ty)^{-*})=xty*(xy)^{-*}*x(ty)^{-*}$, from which \begin{equation}(xty)^{-*}*xy=(xy)^{-*}*x(ty)^{-*}.\label{12}\end{equation} Replacing (\ref{12}) in (\ref{13}), we get that, for each $z\in X$, $$\begin{array}{l}
     (\lambda^y_{x}\circ \lambda^y_{t})(z)=(xy)^{-*}* (x(ty)^{-*})*(xy)^{-*}* (xtz)=(xty)^{-*}*xy*(xy)^{-*}* (xtz) = \\
    \qquad  =(xty)^{-*}*(xtz)=\lambda^y_{xt}(z),
\end{array}$$ as desired.
\end{Proof}

Clearly, from Lemma \ref{Bachi} we get a semigroup morphism $$\Lambda=\prod_{y\in X}\lambda^y\colon (X,\cdot{})\to\prod_{y\in X}\End_{\Gp}(X, b_y).$$

\bigskip

From Lemma  \ref{Bachi} we also get the following result:

\begin{prop} Let $(X,[-,-,-],\cdot{})$ be a left near-truss. Define a ternary operation $\{-,-,-\}$ on $X$ setting $\{x,y,z\}=[y,xy,xz]$ for every $x,y,z\in X$. Then the algebra $(X,[-,-,-],\{-,-,-\})$ has the property that every $(X,b_y,m_y)$ is a left near-ring. Here $m_y$ is defined by $m_y(x,z)=\{x,y,z\}$  for every $x,y,z\in X$.\end{prop}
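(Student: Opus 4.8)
The plan is to unwind the definition of $\{-,-,-\}$ inside the group $(X,b_y)$ and recognize the result as the Jacobson-type multiplication attached to the near-ring action $\lambda^y$ of Lemma~\ref{Bachi}. First I would fix $y\in X$, write $*$ for the operation of the group $(X,b_y)$ and $z^{-*}$ for inverses in that group, and observe that $y$ is the identity of $(X,b_y)$ (Theorem~\ref{X}). Then I would rewrite $\{x,z,w\}=[z,xz,xw]$ in group notation: $[a,b,c]=a*b^{-*}*c$, so $\{x,z,w\}=z*(xz)^{-*}*(xw)$. Multiplying and dividing by $xy=\lambda^y_x$-data, I would massage this into $z*\bigl((xy)^{-*}*(xy)\bigr)*(xz)^{-*}*(xw)=z*(xy)^{-*}*\bigl((xy)*(xz)^{-*}\bigr)*(xw)$; using that $\lambda^y_x(z)=(xy)^{-*}*(xz)$ is a group endomorphism (Lemma~\ref{Bachi}), one has $(xy)*(xz)^{-*}*(xw)=xy*\bigl(\lambda^y_x(z)^{-*}*\lambda^y_x(w)\bigr)$, and after simplification $\{x,z,w\}=z*\lambda^y_x(z)^{-*}*\lambda^y_x(w)$ — that is, $m_y(x,w)=x\cdot_{(\text{near-ring})}w$ has the shape $z\mapsto$ something but with the crucial feature that $m_y(x,w)$ depends on $x$ only through the endomorphism $\lambda^y_x$.

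Next I would verify the left-near-ring axioms for $(X,b_y,m_y)$. The additive group is $(X,b_y)=(X,*)$, already known to be a group. For the multiplicative semigroup: $m_y(x,m_y(x',w))=\{x,y,\{x',y,w\}\}=[y,xy,x[y,x'y,x'w]]$, and expanding with left distributivity $x[y,x'y,x'w]=[xy,xx'y,xx'w]$ gives $m_y(x,m_y(x',w))=[y,xy,[xy,xx'y,xx'w]]=[y,xx'y,xx'w]=\{xx',y,w\}=m_y(xx',w)$, so $m_y$ is associative with the same multiplication as $(X,\cdot)$ in the first slot. For left distributivity of $m_y$ over $*$: $m_y(x,z*w)=\{x,y,z*w\}=[y,xy,x(z*w)]$, and since $\lambda^y_x$ is a group endomorphism, $x(z*w)$ expands (via left distributivity of the near-truss, exactly as in the proof of Lemma~\ref{Bachi}) so that $[y,xy,x(z*w)]=[y,xy,xz]*[y,xy,xw]$ after the routine cancellation $*(xy)^{-*}*(xy)*=*$; this is $m_y(x,z)*m_y(x,w)$. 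That is all a left near-ring requires: a group under addition, a semigroup under multiplication, and left distributivity of multiplication over addition (no right distributivity, no additive commutativity).

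The one genuine point to check — and the step I expect to be the main obstacle, though it is really a short identity rather than a deep difficulty — is that the first argument $x$ of $m_y$ enters only through $\lambda^y_x$, so that the bracket manipulations close up cleanly; concretely, one must confirm that $[y,xy,x(-)]$ equals the map $w\mapsto \lambda^y_x(w)$ composed appropriately, using Lemma~\ref{Bachi}'s formula $\lambda^y_x(w)=(xy)^{-*}*(xw)$ and the identity $[y,xy,xw]=y*(xy)^{-*}*(xw)=\lambda^y_x(w)$. Once that is in hand, associativity and left distributivity of $m_y$ both reduce, via the techniques of Remark~\ref{2.2} and the left distributivity of the near-truss, to the corresponding facts already recorded for $\lambda^y$ in Lemma~\ref{Bachi}. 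I would present the proof by first establishing $\{x,y,z\}=\lambda^y_x(z)$ as a lemma-within-the-proof (for the fixed index $y$ equal to the middle slot), then reading off all three near-ring axioms from the group structure of $(X,b_y)$ together with Lemma~\ref{Bachi}, leaving the purely mechanical bracket computations to the reader in the spirit of the rest of the section.
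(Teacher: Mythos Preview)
Your strategy---recognize $m_y(x,z)=\lambda^y_x(z)$ and invoke Lemma~\ref{Bachi}---is exactly the route the paper indicates (it offers no proof beyond the line ``from Lemma~\ref{Bachi} we also get the following result''). The identity $\{x,y,z\}=[y,xy,xz]=y*(xy)^{-*}*(xz)=\lambda^y_x(z)$ in $(X,b_y)$ is correct, and left distributivity of $m_y$ over $b_y$ then follows immediately because each $\lambda^y_x$ is a group endomorphism of $(X,b_y)$.

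The gap is in your associativity step. You correctly compute
\[
m_y\bigl(x,m_y(x',w)\bigr)=\{x,y,\{x',y,w\}\}=[y,xx'y,xx'w]=m_y(x\cdot x',w),
\]
but this is only the action identity $\lambda^y_x\circ\lambda^y_{x'}=\lambda^y_{xx'}$ of Lemma~\ref{Bachi}. For $(X,m_y)$ to be a \emph{semigroup} one needs $m_y\bigl(m_y(x,x'),w\bigr)=m_y\bigl(x,m_y(x',w)\bigr)$; your identity rewrites the right-hand side as $m_y(xx',w)$, while the left-hand side is $m_y\bigl(\lambda^y_x(x'),w\bigr)=\lambda^y_{\lambda^y_x(x')}(w)$. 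Equality would require $\lambda^y_{\lambda^y_x(x')}=\lambda^y_{xx'}$, i.e.\ control over $[y,xy,xx']\cdot(-)$, which is information about how the \emph{first} argument of $\cdot$ interacts with the heap operation---precisely what a left near-truss does not give you. Concretely, in the truss $(\Z,[a,b,c]=a-b+c,\cdot)$ of Example~\ref{466} with $y=1$ one finds $m_1(x,z)=1-x+xz$, and then $m_1\bigl(m_1(2,3),4\bigr)=m_1(5,4)=16$ while $m_1\bigl(2,m_1(3,4)\bigr)=m_1(2,10)=19$. So your sentence ``so $m_y$ is associative with the same multiplication as $(X,\cdot)$ in the first slot'' conflates an action identity (which you have) with associativity of the binary operation $m_y$ (which you do not have), and this step cannot be repaired from Lemma~\ref{Bachi} alone under the stated hypotheses.
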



Since $\End_{\Gp}(X, b_y)\subseteq \End_{\Hp}(X, [-,-,-])$, from Lemma~\ref{Bachi} we also get that:

\begin{Lemma}\label{Bachi'} {\rm \cite[Proposition 3.5 and Remark 3.6]{Bre}} Let $(X,[-,-,-],\cdot{})$ be a left near-truss and $y$ be a fixed element of $X$. Then $\lambda^y\colon (X,\cdot{})\to\End_{\Hp}(X, [-,-,-])$, given by $\lambda^y\colon x\mapsto\lambda_x^y$, where $\lambda_x^y(z)=[y,xy,xz]$ for every $z\in X$, is a semigroup morphism.\end{Lemma}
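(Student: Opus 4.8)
The plan is to derive Lemma~\ref{Bachi'} as a direct specialization of Lemma~\ref{Bachi}. First I would observe that the only new content is the reformulation of the formula: in Lemma~\ref{Bachi} we have $\lambda_x^y(z)=(xy)^{-*}*(xz)$, where $*$ is the operation of the group $(X,b_y)$, and I must check that this equals $[y,xy,xz]$. Recall that the group operation on $(X,b_y)$ is $a*b=[a,y,b]$, that $y$ is its identity (Lemma~1 of Section~2, applied to $b_y$), and that by Theorem~\ref{X} the inverse of an element $a$ in $(X,b_y)$ is $[y,a,y]$; more generally, the inverse of $a$ in $(X,b_e)$ is $[e,a,e]$. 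Hence $(xy)^{-*}=[y,xy,y]$ and therefore
$$(xy)^{-*}*(xz)=\bigl[[y,xy,y],\,y,\,xz\bigr].$$
Using associativity of the ternary operation (i.e., that $(X,p)$ is a semiheap), this equals $[y,xy,[y,y,xz]]=[y,xy,xz]$, since $[y,y,xz]=xz$ by the Mal'tsev identity. This is exactly the formula claimed in Lemma~\ref{Bachi'}.

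The second step is to note that Lemma~\ref{Bachi} already establishes that $\lambda^y\colon(X,\cdot{})\to\End_{\Gp}(X,b_y)$, $x\mapsto\lambda_x^y$, is a semigroup morphism. Since every group endomorphism of $(X,b_y)$ is in particular a heap endomorphism of $(X,[-,-,-])$ — a map preserving $a*b=[a,y,b]$ and inverses preserves all ternary expressions $[a,b,c]=[a,y,[y,b,[y,y,c]]]=\dots$, or more directly, a group homomorphism $g$ of $(X,b_e)$ satisfies $g([x,y,z])=g(x\cdot_e y^{-1}\cdot_e z)=g(x)\cdot_e g(y)^{-1}\cdot_e g(z)=[g(x),g(y),g(z)]$ — there is an inclusion $\End_{\Gp}(X,b_y)\subseteq\End_{\Hp}(X,[-,-,-])$, as already remarked in the sentence preceding the statement. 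Composing $\lambda^y$ with this inclusion gives a semigroup morphism $(X,\cdot{})\to\End_{\Hp}(X,[-,-,-])$ with the same underlying assignment $x\mapsto\lambda_x^y$, now described by the formula $\lambda_x^y(z)=[y,xy,xz]$ by the first step. This is precisely the assertion of Lemma~\ref{Bachi'}.

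There is essentially no obstacle here; the statement is a packaging of Lemma~\ref{Bachi} together with the trivial observation $\End_{\Gp}(X,b_y)\subseteq\End_{\Hp}(X,[-,-,-])$. The only point requiring a line of computation is the identity $(xy)^{-*}*(xz)=[y,xy,xz]$, and that follows immediately from the Mal'tsev and associativity axioms as shown above. Accordingly I would write the proof as two short sentences: one recalling that $\lambda^y$ valued in $\End_{\Gp}$ is a semigroup morphism by Lemma~\ref{Bachi}, with $\End_{\Gp}(X,b_y)\subseteq\End_{\Hp}(X,[-,-,-])$; and one verifying that the formula $(xy)^{-*}*(xz)$ rewrites as $[y,xy,xz]$ using that the inverse of $xy$ in $(X,b_y)$ is $[y,xy,y]$ and that $[y,y,xz]=xz$. (This also matches the cited reference \cite[Proposition 3.5 and Remark 3.6]{Bre}.)
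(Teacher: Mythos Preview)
Your proposal is correct and follows exactly the paper's approach: the paper derives Lemma~\ref{Bachi'} from Lemma~\ref{Bachi} together with the inclusion $\End_{\Gp}(X,b_y)\subseteq\End_{\Hp}(X,[-,-,-])$, which is precisely the sentence preceding the statement. Your additional verification that $(xy)^{-*}*(xz)=[[y,xy,y],y,xz]=[y,xy,[y,y,xz]]=[y,xy,xz]$ makes explicit the only small computation the paper leaves implicit, and it is correct.
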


\section{Generalizations of left near-trusses}

It is also convenient to define a generalization of left near-trusses: a {\em  left semi-near-truss} $(X,[-,-,-],\circ)$ is a set endowed with a ternary operation $[-,-,-]$ and a binary operation $\circ$, both associative  operations on $X$, such that left distributivity holds: $$x\circ[y,z,u]=[x\circ y,x\circ z,x\circ u].$$

\begin{exams} (1) 
Consider the triple $(\R, p, \cdot{})$, where $\R$ is the set of real numbers,\linebreak $p(x,y,z)=x$ for every $x,y,z\in\R$ and $\cdot$ denotes the usual operation of multiplication on $\R$. Then $(\R, p, \cdot{})$ is a left semi-near-truss.

(2) Fix any distributive lattice $(L,\vee,\wedge)$, let $p(x,y,z)=x\vee y\vee z$ be the semiheap operation on $L$ defined in Example~\ref{111}(d). Then $(L,p,\wedge)$ is a left semi-near-truss.\end{exams} 

A {\em left truss} $(X,[-,-,-],\circ)$ is a left near-truss for which the heap $(X,[-,-,-])$ is abelian. Similarly, a  {\em right truss} $(X,[-,-,-],\circ)$ is a right near-truss for which $(X,[-,-,-])$ is an abelian heap. A left truss that is also a right truss, is called a {\em truss}. Hence a truss $(X,[-,-,-],\circ)$ consists of an abelian heap $(X,[-,-,-])$, a semigroup $(X,\circ)$, and both distributivity laws hold.

\medskip

The main example of ring is, for any abelian group $(G,+)$, the endomorphism ring $(\End(G),+,\circ)$. Similarly, the main example of truss is, for any abelian heap $(X,[-,-,-])$, the endomorphism truss $(\End_\Hp(X),p,\circ)$ of $(X,[-,-,-])$. Here $\End_\Hp(X)$ denotes the set of all heap endomorphisms of $(X,[-,-,-])$.  The ternary operation $p$ on $\End_\Hp(X)$ is defined pointwise: for every $f,g,h\in \End_\Hp(X)$, that is, for every $f,g,h\colon X\to X$ that are heap endomorphisms of $X$, we have that $p(f,g,h)(x)=[f(x),g(x),h(x)]$ for every $x\in X$. 

\medskip

For any ring $R$, there are a canonical homomorphism $\mu\colon R\to \End(R,+)$ defined by $\mu(x)(y)=xy$ for every $x,y\in R$ and a canonical antihomomorphism $\rho\colon R\to\End(R,+)$ defined by $\rho(x)(y)=yx$ for every $x,y\in R$. Moreover, there is the compatibility $$\mu(x)(y)=\rho(y)(x)$$ for every $x,y$, and associativity of multiplication in the ring $R$ can be expressed by $\mu(x)\circ\rho(y)=\rho(y)\circ\mu(x)$  for every $x,y\in R$.

Similarly for trusses. Given any left near-truss $(H,[-,-,-],\cdot )$, we can define a mapping $\mu \colon (H,\cdot )\to (\End_{\Hp}(H,[-,-,-]),\circ )$, defined by $\mu(x)(y)=xy$ for every $x,y\in H$. Then $\mu$ is a semigroup morphism. Conversely, given any heap $(H,[-,-,-])$ and a further binary operation $\cdot$ on $H$, if the mapping $\mu\colon H\to \End_{\Hp}(H,[-,-,-])$, $\mu (x)(y)=xy$ for every $x,y\in H$, is a well defined semigroup morphism, then $(H,[-,-,-],\cdot )$ is a left near-truss. 

\medskip

Given any truss $(H,[-,-,-],\cdot{})$, we can define two mappings $\mu\colon H\to \End(H,[-,-,-])$, defined by $\mu(x)(y)=xy$ for every $x,y\in H$ and $\rho\colon H\to \End(H,[-,-,-])$ defined by $\rho(x)(y)=yx$ for every $x,y\in H$. Then $\mu$ is a truss morphism and $\rho$ is a truss antihomomorphism.

More precisely, having a left truss structure $(H,[-,-,-],\cdot{})$ is equivalent to having an abelian heap $(H,[-,-,-])$ with a truss morphism $$\mu\colon (H,[-,-,-],\cdot{})\to (\End_\Hp(H),[-,-,-],\circ).$$ 

A right truss structure $(H,[-,-,-],\cdot{})$ is equivalent to an abelian heap $(H,[-,-,-])$ with a truss antihomomorphism $\rho\colon (H,[-,-,-],\cdot{})\to (\End_\Hp(H),[-,-,-],\circ)$. 
A truss is a left truss that is also a right truss and there is compatibility $\mu(x)(y)=\rho(y)(x)$ for every $x,y$.

\section{Ideals and congruences on a left near-truss}

We now study congruences on left near-trusses. The following results  on left near-trusses are inspired by the corresponding results for trusses due to Brzezi{\'n}ski \cite{Bre}, though our terminology is partially different from his (we call ideals what he calls paragons).

A {\em congruence} on a left near-truss $(X, [-,-,-],\cdot{})$ is an equivalence relation $\sim$ on the set $X$ such that $[x,y,z]\sim[x',y',z']$ and $xy\sim x'y'$ for every $x,x',y,y',z,z'\in X$ such that $x\sim x'$, $y\sim y'$ and $z\sim z'$. 
Congruences on a left near-truss form a complete lattice.

\begin{Lemma}\label{7.1} Let $(X,[-,-,-],\cdot)$ be a left near-truss. For every normal subheap $S$ of the heap $(X,[-,-,-])$, let $\sim_S$ be the corresponding congruence on the heap $(X,[-,-,-])$, defined, for every $x,y\in X$, by $x\sim_Sy$ if there exists $s \in S$ such that
$[x, y,s] \in S$. The following conditions are equivalent:

{\rm (a)} $\sim_S$ is a congruence for the left near-truss $(X,[-,-,-],\cdot)$.

{\rm (b)} $[xp,xq,q]\in S$ and $[[p,q,x]y,xy,q]\in S$ for every $x,y\in X$ and every $p,q\in S$.\end{Lemma}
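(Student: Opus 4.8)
The statement characterizes when the heap-congruence $\sim_S$ respects multiplication. Since $\sim_S$ is already a congruence for the heap operation (Proposition~\ref{eq}), the only thing to check is compatibility with $\cdot{}$: namely that $x\sim_S x'$ and $y\sim_S y'$ imply $xy\sim_S x'y'$. The plan is to fix an element $e\in S$ and translate everything into the group $(X,b_e)$, exactly as was done in Section~4 for commutators. Writing the group operation multiplicatively and by juxtaposition, $S$ becomes a normal subgroup of $(X,b_e)$ and, by Corollary~\ref{2.9}, $x\sim_S x'$ is equivalent to $x'\in xS$, i.e. $x' = xs$ for some $s\in S$. So the general element $\sim_S$-equivalent to $x$ is $xp$ with $p\in S$, and the general element $\sim_S$-equivalent to $y$ is $yq$ with $q\in S$ (using right cosets $xS = Sx$ since $S$ is normal; one must be a little careful about which side, but normality makes this harmless).

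First I would observe that, using left distributivity and Theorem~\ref{6.2}, each left multiplication $\lambda_x\colon z\mapsto xz$ is a group homomorphism $(X,b_e)\to(X,b_{xe})$, and hence, after correcting by a translation, induces the endomorphism $\lambda_x^e$ of $(X,b_e)$ from Lemma~\ref{Bachi}. Compatibility of $\sim_S$ with $\cdot{}$ then amounts to: for all $x,y\in X$ and all $p,q\in S$, the element $(x'y')(xy)^{-*}$ lies in $S$, where $x'=xp$, $y'=yq$. By a standard telescoping — first move from $(xy)$ to $(xp)y$, then from $(xp)y$ to $(xp)(yq)$ — it suffices to handle the two ``one-variable'' changes separately:
\begin{itemize}\item[(i)] $(xp)(yq)\sim_S (xp)y$, i.e. for fixed $w:=xp$ the map $z\mapsto wz$ sends the coset $yS$ into a single $\sim_S$-class; unwinding via $w(yq)\,(wy)^{-*}\in S$ and left distributivity $w(y*_e q)=(wy)*_e(we)^{-*}*_e(wq)$, this reduces to $(we)^{-*}*_e(wq)\in S$, i.e. to the condition $[wq,we,e]\in S$, which in heap terms (substituting $w=xp$ and using that $S$ is a subheap so we may absorb the extra $p$) becomes the first condition $[xp,xq,q]\in S$;
\item[(ii)] $(xp)y\sim_S xy$, which after passing to $(xy)^{-*}$ and rearranging becomes a statement of the form $[[p,q,x]y,xy,q]\in S$, the second condition.
\end{itemize}
I have been deliberately schematic about exactly which heap expressions appear; the real content is choosing the right substitutions so that the group-theoretic identities $w(z_1*z_2)=(wz_1)*(we)^{-*}*(wz_2)$ collapse to membership statements purely about $S$.

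The converse direction is the easy half: assuming (b), one runs the same computation backwards — given $x\sim_S x'$ and $y\sim_S y'$, write $x'=xp$, $y'=yq'$ with $p,q'\in S$, reduce $x'y'\,(xy)^{-*}$ to a product of conjugates of the elements appearing in (b), and conclude it lies in $S$ because $S$ is a normal subgroup of $(X,b_e)$ (so closed under conjugation) — this is the step ``$[y^{-1}ny,m]\in[N,M]$'' pattern already used after~\eqref{ct7i}. The main obstacle I anticipate is purely bookkeeping: keeping straight the interplay between the symmetric heap notation $[-,-,-]$ and the asymmetric group notation $*_e$, and in particular verifying that the two seemingly asymmetric conditions in (b) are exactly the heap-language transcriptions of the two group-language conditions $(we)^{-*}*_e(wq)\in S$ and its companion, with no hidden dependence on the choice of $e$. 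Once the correct dictionary entries are written down, everything follows from left distributivity and normality of $S$.
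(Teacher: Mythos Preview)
Your strategy is correct in spirit and lands close to the paper's argument, but two points deserve comment.

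First, you have the difficulty backwards. The implication (a)${}\Rightarrow{}$(b) is the short one: once $\sim_S$ is a congruence, you have the canonical surjection $\pi\colon X\to X/{\sim_S}$, and since $\pi(p)=\pi(q)=S$ for all $p,q\in S$, you simply compute $\pi([xp,xq,q])=[\pi(x)S,\pi(x)S,S]=S$ and $\pi([[p,q,x]y,xy,q])=[\pi(x)\pi(y),\pi(x)\pi(y),S]=S$. No group translation is needed. Your plan buries this under the same machinery as the other direction.

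Second, for (b)${}\Rightarrow{}$(a), your telescoping into ``change the right factor'' and ``change the left factor'' is exactly what the paper does, but the paper avoids the bookkeeping you worry about by \emph{not} fixing a single $e$: given $y\sim_S z$, it takes the witness $p\in S$ with $[y,z,p]\in S$ and works in $(X,b_p)$. Then $[xy,xz,p]$ rewrites directly as $[x[y,z,p],xp,p]$, which is an instance of the first condition of (b) with no residue to absorb. Similarly, writing $y=[p,q,z]$ for suitable $p,q\in S$, the second condition of (b) literally reads $[yx,zx,q]\in S$. Your sentence ``substituting $w=xp$ and using that $S$ is a subheap so we may absorb the extra $p$'' is where the notational clash bites: in your setup $xp$ means $x*_e p$ (group product), while in (b) it means $x\cdot p$ (semigroup product), and the substitution you propose does not produce the expression in (b). Your derived condition $(w\cdot e)^{-*}*_e(w\cdot q)\in S$ is indeed \emph{equivalent} to the first condition of (b), but via the argument $[x\cdot p,x\cdot q,q]=[x\cdot p,x\cdot e,[x\cdot e,x\cdot q,q]]$ together with closure of $S$ under $*_e$, not via the substitution you sketch. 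Letting the base point float with the data, as the paper does, makes this extra step disappear.
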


\begin{Proof} (a)${}\Rightarrow{}$(b) Suppose that $\sim_S$ is compatible with the multiplication $\cdot$ on the left near-truss $X$. Then $X/\sim_S$ is a left near-truss and there is a canonical homomorphism $\pi\colon X\to X/\sim_S$, which is a surjective left near-truss morphism. Fix $x,y\in X$ and $p,q\in S$. Then $\pi([xp,xq,q])=[\pi(x)\pi(p),\pi(x)\pi(q),\pi(q)]=\pi(q)=S$ because $\pi(p)=\pi(q)=S$, so $$[xp,xq,q]\in S.$$ Similarly $$\begin{array}{l}\pi([[p,q,x]y,xy,q])=[[\pi(p),\pi(q),\pi(x)]\pi(y),\pi(x)\pi(y),\pi(q)]= \\ \qquad =[\pi(x)\pi(y),\pi(x)\pi(y),\pi(q)]=\pi(q)=S,\end{array}$$ so $[[p,q,x]y,xy,q]\in S$.

(b)${}\Rightarrow{}$(a) Now assume that (b) holds for the normal subheap $S$. In order to prove that (a) holds, it suffices to show that if $x,y,z\in X$ and $y\sim_Sz$, then $xy\sim_Sxz$
    and $yx\sim_Szx$. From $y\sim_Sz$, we know that $[y,z,p]\in S$ for some $p\in S$. Then in the group $(X,b_p)$ we have that $[xy,xz,p]=xy-_pxz+_pp=xy-_pxz=xy-_pxz+_pxp-_pxp=[xy,xz,xp]-_pxp=[[xy,xz,xp],p,[p,xp,p]]=[x[y,z,p],xp,p]$. From $[y,z,p]\in S$ and the first property in (b) it follows that $[xy,xz,p]\in S$, so $xy\sim_Sxz$.

    Let us prove that $yx\sim_Szx$. We have that $y\sim_Sz$ implies $[y,z,q]\in S$ for some $q\in S$, so that, in the group $(X,b_q)$, $y-_qz+_qq\in S$, that is, the elements $y$ and $z$ of $(X,b_q)$ are congruent modulo the normal subgroup $S$ of $(X,b_q)$. Hence $y=p+_qz$ for some $p\in S$, i.e., $[p,q,z]=y$. It follows that $[p,q,z]x=yx$. From the second property in (b), we obtain that $S$ contains the element $[[p,q,z]x,zx,q]=[yx,zx,q]$, hence $yx\sim_Szx$, as we wanted to prove.
\end{Proof}

We will call  {\em ideal\/} in a left near-truss $(X, [-,-,-],\cdot{})$ any normal subheap $S$ of $(X, [-,-,-])$ such that $[xp,xq,q]\in S$ and $[[p,q,x]y,xy,q]\in S$ for every $x,y\in X$ and every $p,q\in S$.

As a consequence of Lemma \ref{7.1} we immediately get:

\begin{thm}\label{bijpp} Let $X$ be a left near-truss, $\Cal I(X)$ the set of all ideals of $X$, and $\Cal C(X)$ the set of all congruences of $X$. Then  there is a mapping $\Cal I(X)\to\Cal C(X)$, $S\mapsto\sim_S$, which  is a surjective mapping.\end{thm}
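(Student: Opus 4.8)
The plan is to extract \textbf{Theorem \ref{bijpp}} almost immediately from the machinery already in place, using \textbf{Lemma \ref{7.1}} as the crucial bridge. First I would recall that, by \textbf{Theorem \ref{bijp}} (or already by \textbf{Proposition \ref{eq}} together with \textbf{Theorem \ref{4.1}}'s surrounding discussion), the assignment $N \mapsto {}\sim_N$ from normal subheaps of the underlying heap $(X,[-,-,-])$ to congruences on that heap is surjective onto $\Cal C$ of the heap. The point of the present statement is to carry this surjectivity up one level: from heap-congruences to left-near-truss-congruences. So the first step is to observe that a left-near-truss congruence is in particular a heap congruence that is also compatible with $\cdot{}$; thus $\Cal C(X)$ (near-truss congruences) sits inside the lattice of heap congruences, and the map $S \mapsto {}\sim_S$ restricted to the ideals $\Cal I(X)$ does land in $\Cal C(X)$ precisely by the definition of ideal, which is exactly condition (b) of \textbf{Lemma \ref{7.1}} guaranteeing (a), i.e.\ that $\sim_S$ is a near-truss congruence. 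Hence the map $\Cal I(X) \to \Cal C(X)$, $S \mapsto {}\sim_S$, is well defined.

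Next I would prove surjectivity. Let $\theta \in \Cal C(X)$ be an arbitrary congruence on the left near-truss $X$. Then $\theta$ is in particular a congruence on the heap $(X,[-,-,-])$. Fixing an element $e \in X$, by \textbf{Proposition \ref{eq}} the heap-congruence $\theta$ corresponds to the normal subheap $S := [e]_\theta$, and ${}\sim_S{} = \theta$ as heap congruences (this is the content of the bijection between congruences and normal subheaps containing a fixed point, combined with the remark that $\sim_S$ does not depend on which representative of $S$ one uses). It remains only to check that this $S$ is actually an \emph{ideal}, not merely a normal subheap: that is, that condition (b) of \textbf{Lemma \ref{7.1}} holds. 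But since $\theta = {}\sim_S$ is already known to be a near-truss congruence, this is exactly the implication (a)${}\Rightarrow{}$(b) of \textbf{Lemma \ref{7.1}}. Therefore $S \in \Cal I(X)$ and $\sim_S{} = \theta$, proving surjectivity.

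In short, the proof is: well-definedness of $\Cal I(X) \to \Cal C(X)$ is (b)${}\Rightarrow{}$(a) of \textbf{Lemma \ref{7.1}}; surjectivity is (a)${}\Rightarrow{}$(b) of the same lemma together with the surjectivity of $N \mapsto {}\sim_N$ for heaps from \textbf{Proposition \ref{eq}}. The only mildly delicate point — the place I would be most careful about — is the interface between the heap level and the near-truss level: one must make sure that a congruence on the near-truss $X$ really is the $\sim_S$ associated to $S = [e]_\theta$ as computed \emph{in the heap}, so that the already-established heap-theoretic correspondence applies verbatim. This is immediate from the definition of a near-truss congruence (it is a heap congruence plus multiplicative compatibility), so there is no genuine obstacle here; the theorem is essentially a bookkeeping corollary of \textbf{Lemma \ref{7.1}}, which is exactly how the text flags it (``As a consequence of Lemma \ref{7.1} we immediately get''). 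One may optionally add the remark — paralleling the heap case in \textbf{Theorem \ref{bijp}} — that the map $\Cal I(X) \to \Cal C(X)$ is in general not injective, and that fixing $e \in X$ and restricting to ideals containing $e$ yields a bijection onto $\Cal C(X)$; but this is not required for the stated theorem.
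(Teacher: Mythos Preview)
Your argument is correct and matches the paper's intended approach exactly: the paper simply says ``As a consequence of Lemma~\ref{7.1} we immediately get'' the theorem, and your write-up is precisely the natural unpacking of that remark---well-definedness is (b)${}\Rightarrow{}$(a) of Lemma~\ref{7.1}, and surjectivity is (a)${}\Rightarrow{}$(b) together with Proposition~\ref{eq} applied to $S=[e]_\theta$.
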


\bigskip

In view of Theorem~\ref{6.2} and Lemma~\ref{6.3}, it is convenient to study the structures $(X,+,\cdot{})$ for which $(X,+)$ is a group, not-necessarily abelian (so that probably we should be more careful and write also here $(X,+,-,0)$ as one does correctly in Universal Algebra), $(X,\cdot{})$ is a semigroup, and $w(x+z)=wx-(w\cdot 0)+wz$. Let's call them $J$-rings ($J$ for Jacobson), because our main example is, for any ring $(R,+,\cdot{})$, the $J$-ring $(R,+,\circ)$, where $\circ$ is the {\em Jacobson multiplication} $x\circ y=x+y+xy$.

Thus:

\begin{defin} A {\em $J$-ring} $(X,+,-,0,\cdot{})$ is a set $X$ with two binary operations $+$ and $\cdot$, a unary operation $-$ and a $0$-ary operation $0$ satisfying:

\noindent {\rm (i)} associativity of $+$;

\noindent {\rm (ii)} $x+0=0+x=x$ for every $x\in X$;

\noindent {\rm (iii)} 
$x+(-x)=(-x)+x=0$ for every  $x\in X$;

\noindent {\rm (iv)} associativity of $\cdot$;

\noindent {\rm (v)} ``left weak distributivity'' in the form $z(x+y)=zx-(z\cdot 0)+zy$  for every  $x,y,z\in X$.\end{defin}

The example of $J$-ring $(R,+,\circ)$ in Example~\ref{466} shows that in a $J$-ring one does not have in general $x\cdot 0=0$ nor $0\cdot x=x$.

\bigskip

We saw in Lemma~\ref{Bachi} that for any $J$-ring $(X,+,\cdot{})$ there is a semigroup morphism $\lambda\colon (X,\cdot{})\to\End_{\Gp}(X, +)$, given by $\lambda\colon x\mapsto\lambda_x$, where $\lambda_x(z)=-(x\cdot 0)+xz$ for every $x,z\in X$.

We have called {\em left weak distributivity} Property (v) in the definition of $J$-ring because it is a kind of modified distributivity. Passing to the new multiplication $\lambda$ corrects this alteration.

\bigskip

An {\em ideal} $I$ in a $J$-ring $(X,+,\cdot{})$ is a normal subgroup $N$ of the group $(X,+)$ such that $xn-x\cdot 0\in N$ and $(x+n)y-xy\in N$ for every $x,y\in X$ and every $n\in N$.

\begin{Lemma}\label{eq''} Let $(X,[-,-,-],\cdot)$ be a left near-truss and let $e$ be an element of $X$.
    Then there is a lattice isomorphism between the lattice of all ideals of the  $J$-ring $(X,b_e,\cdot)$ and the lattice of all congruences on $(X,[-,-,-],\cdot)$. This correspondence associates with every ideal $N$ of the  $J$-ring $(X,b_e,\cdot)$ the congruence $\sim_N$ on $(X,[-,-,-],\cdot)$ defined, for every $x,y\in X$, by $x\sim_N y$ if $x-y\in N$. Conversely, it associates to any congruence $\sim$ on $(X,[-,-,-],\cdot)$ the equivalence class $[e]_\sim$ of $e$ modulo $\sim$.
\end{Lemma}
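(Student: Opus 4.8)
The statement to prove is Lemma~\ref{eq''}: for a left near-truss $(X,[-,-,-],\cdot)$ with a fixed element $e$, the lattice of ideals of the $J$-ring $(X,b_e,\cdot)$ is lattice-isomorphic to the lattice of congruences on $(X,[-,-,-],\cdot)$, via $N\mapsto{}\sim_N$ and $\sim{}\mapsto[e]_\sim$. The plan is to reduce everything to results already established and to the heap-level bijection of Proposition~\ref{eq}. The key observation is that by Lemma~\ref{6.3}(a)-type reasoning (more precisely, by Theorem~\ref{6.2}), $(X,b_e,\cdot)$ is indeed a $J$-ring, so the notion of ``ideal of the $J$-ring $(X,b_e,\cdot)$'' is well defined, and we must match it with ``ideal in the left near-truss $X$'' in the sense introduced just before Theorem~\ref{bijpp}.

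First I would set up the underlying bijection of sets. By Proposition~\ref{eq}, normal subheaps of $(X,[-,-,-])$ containing $e$ correspond bijectively, and lattice-isomorphically, to congruences on the heap $(X,[-,-,-])$, with $\sim{}\mapsto[e]_\sim$ and $S\mapsto{}\sim_S$; moreover a normal subheap containing $e$ is precisely a normal subgroup of the group $(X,b_e)$ (Corollary~\ref{3.4}). So the skeleton of the correspondence is already in place, and what remains is to check that under this bijection the ``multiplicative'' side matches up: a normal subgroup $N$ of $(X,b_e)$ is an ideal of the $J$-ring $(X,b_e,\cdot)$ (i.e. $xn -_e x\cdot e\in N$ and $(x+_e n)y -_e xy\in N$ for all $x,y\in X$, $n\in N$, writing $+_e$ for $b_e$ and $-_e$, $0_e=e$ accordingly) if and only if $\sim_N$ is a congruence of the whole left near-truss, if and only if $N$, regarded as a normal subheap, is an ideal of the left near-truss in the sense of Lemma~\ref{7.1}.

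The heart of the argument is therefore a translation between the two forms of the ideal conditions. Working in the group $(X,b_e)$ with additive notation and $e=0$, I would show directly that the heap-theoretic conditions of Lemma~\ref{7.1}(b), namely $[xp,xq,q]\in S$ and $[[p,q,x]y,xy,q]\in S$ for all $x,y\in X$, $p,q\in S$, are equivalent — after substituting $n:=p-_e q\in N$ and using that $N$ is a subgroup — to the $J$-ring conditions $xn -_e x\cdot e\in N$ and $(n+_e x)y -_e xy\in N$. For the first: $[xp,xq,q]=xp -_e xq +_e q$; using left distributivity in the form of Theorem~\ref{6.2} ($w(x*_y z)=(wx)*_y (wy)^{-*}*_y(wz)$) one rewrites $xp=x(n+_e q)= xn -_e x\cdot e +_e xq$, so $[xp,xq,q]= xn -_e x\cdot e +_e q$, which lies in $S$ iff $xn -_e x\cdot e\in N$. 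The second condition is handled analogously, expanding $[p,q,x]=n+_e x$ (since $[p,q,x]= p-_e q+_e x = n +_e x$) and $[[p,q,x]y,xy,q]=(n+_e x)y -_e xy +_e q$. Then invoke Lemma~\ref{7.1} for the equivalence with $\sim_S$ being a near-truss congruence, and Theorem~\ref{bijpp}/Proposition~\ref{eq} to conclude that the restricted correspondence is a bijection, indeed an order isomorphism (monotone both ways, since the heap-level map already is and the ideal conditions are preserved under the inclusion order).

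**Expected main obstacle.** The routine-but-delicate part is bookkeeping the passage between the ternary operation and the group operation $b_e$ — keeping track of where $x\cdot e$ (the ``defect'' $w\cdot 0$ of left weak distributivity) enters, and making sure the substitution $n=p-_e q$ really gives a bijection between ``all $p,q\in S$'' data and ``all $n\in N$'' data so that the quantifiers match. Once that translation is pinned down, equivalence of the two ideal conditions is a short computation, and the lattice isomorphism is then immediate from Proposition~\ref{eq} together with the fact (Theorem~\ref{bijpp}) that $S\mapsto{}\sim_S$ is onto and, restricted to ideals, injective because $S=[e]_{\sim_S}$ recovers $S$.
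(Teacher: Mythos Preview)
Your proposal is correct and close in spirit to the paper's argument, but it routes through a slightly different intermediate equivalence.

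Both your approach and the paper's start identically: use Corollary~\ref{3.4} to identify normal subgroups of $(X,b_e)$ with normal subheaps containing $e$, and Proposition~\ref{eq} to identify those with heap congruences. The task is then to show that, under this bijection, ``$J$-ring ideal'' matches ``near-truss congruence''.

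The paper does this \emph{directly}: given a $J$-ring ideal $N$ it checks by hand that $x\sim_N x'$ implies $xy\sim_N x'y$ and $yx\sim_N yx'$, using the ideal conditions and normality of $N$; for the converse it pushes everything through the canonical projection $\pi\colon X\to X/{\sim}$ and reads off the two ideal conditions from $\pi(xn-x\cdot e)=\pi(e)$ and $\pi((x+n)y-xy)=\pi(e)$.

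You instead insert Lemma~\ref{7.1} as a bridge: you translate the near-truss ideal conditions $[xp,xq,q]\in S$ and $[[p,q,x]y,xy,q]\in S$ into $J$-ring language via the substitution $n=p-_e q$, obtaining $xn-_e xe\in N$ and $(n+_e x)y-_e xy\in N$, and then invoke Lemma~\ref{7.1} for the equivalence with $\sim_S$ being a near-truss congruence. This is a legitimate alternative; it has the merit of exhibiting explicitly that an ideal of the $J$-ring $(X,b_e,\cdot)$ is exactly the same thing as an ideal of the left near-truss (in the sense defined before Theorem~\ref{bijpp}) that contains $e$. One small point you should make explicit when writing it up: your second translated condition is $(n+_e x)y-_e xy\in N$, while the $J$-ring ideal definition reads $(x+_e n)y-_e xy\in N$; these are equivalent because $N$ is normal in $(X,b_e)$, so $n+_e x=x+_e n'$ with $n'\in N$, and the quantifier over $n$ is preserved. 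Once that is noted, your argument goes through cleanly.
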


\begin{Proof}Let $(X,[-,-,-],\cdot)$ be a left near-truss and let $e$ be an element of $X$. By Corollary~\ref{3.4}, normal subgroups of the group $(X,b_e)$ are exactly the normal subheaps of the heap $(X,[-,-,-])$ that contain $e$. By Proposition~\ref{eq}, there is a bijection $N\mapsto\sim_N$ between the set of all 
normal subheaps of $(X,[-,-,-])$ that contain $e$ and the set of all congruences on the heap $(X,[-,-,-])$. Hence we have a lattice isomorphism between the lattice of all congruences on the heap $(X,[-,-,-])$ and the lattice of normal subgroups of the group $(X,b_e)$. Therefore it suffices to prove that, in this lattice isomorphism, ideals of the  $J$-ring $(X,b_e,\cdot)$ correspond to equivalence relations compatible with the multiplication $\cdot$.

   Let $N$ be an ideal of the  $J$-ring $(X,b_e,\cdot)$. Since $N$ is a normal subgroup of $(X,b_e)=(X,+)$, the relation $\sim_N$ is an equivalence relation compatible with the group operation $b_e$, hence with the ternary operation $[-,-,-]$ on $X$ (Proposition~\ref{eq}). Let us prove that $\sim_N$ is compatible with the multiplication $\cdot$ on $X$. Clearly, it suffices to show that if $x,x',y\in X$ and $x\sim_N x'$, then $xy\sim_N x'y$ and $yx\sim_N yx'$. Now $x\sim_N x'$ implies that $x'=x+n$ for some $n\in N$, so that $xy-x'y=xy+(-((x+n)y))=-((x+n)y-xy)\in N$, that is, $xy\sim_N x'y$. In order to show that $yx\sim_N yx'$, we must prove that $yx-yx'\in N$. Now  $yx-yx'=yx-y(x+n)=yx-(yx-y0+yn)=yx-yn+y0-yx=yx-y0+y0-yn+y0-yx=(yx-y0)-(yn-y0)-(yx-y0)$. But $yn-y0$ belongs to $N$, because $N$ is an ideal, so that its opposite $-(yn-y0)$ belongs to $N$, hence its conjugate $(yx-y0)-(yn-y0)-(yx-y0)\in N$ because $N$ is a normal subgroup.
   
   Conversely, let $\sim$ be a congruence on the left near-truss $(X,[-,-,-],\cdot)$, so that $\sim$ is also a congruence on the $J$-ring $(X,[-,-,-],\cdot)$. Let $\pi\colon X\to X/\sim$ be the canonical projection, and let $N:=[e]_\sim$ denote the equivalence class of $e$ modulo $\sim$. Then, for every $x,y\in X$ and every $n\in N$, we have $\pi(xn-x\cdot 0)=\pi(x)\pi(n)-\pi(x)\cdot \pi(e)=\pi(x)N-\pi(x)\cdot N=N$, so $xn-x\cdot 0\in N$. Similarly, $\pi((x+n)y-xy)=(\pi(x)+N)\pi(y)-\pi(x)\pi(y)=N$.
\end{Proof}

Let us go back to the morphism $\lambda$, recalling that, for any $J$-ring $(X,+,\cdot{})$, the semigroup morphism $\lambda\colon (X,\cdot{})\to\End_{\Gp}(X, +)$ is defined by $\lambda\colon x\mapsto\lambda_x$, where $\lambda_x(z)=-(x\cdot 0)+xz$ for every $x,z\in X$. Notice that the condition $xn-x\cdot 0\in N$ for every $x\in X$, $n\in N$, in the definition of ideal is equivalent to $\lambda_x(N)\subseteq N$, and has as a consequence that $\lambda$ induces a semigroup morphism $\lambda'\colon (X,\cdot{})\to\End_{\Gp}(X/N, +)$. The condition $(x+n)y-xy\in N$ for every $x,y\in X$ and every $n\in N$ is equivalent to the fact that $\lambda_x=\lambda_y$ for every $x,y\in X$ with $x\sim_Ny$, so that $\lambda'$ induces a semigroup morphism $(X/N,\cdot{})\to\End_{\Gp}(X/N, +)$.

\section{Huq commutator and Smith commutator for left near-trusses}\label{C}

The question whether ``Huq${}={}$Smith'' or ``Huq${}\ne{}$Smith'' typically concernes varieties of algebras that are semiabelian categories. Huq commutator is a category-theoretic concept introduced
by Huq \cite{14}. The Smith commutator was introduced by Smith \cite{20} for varieties of algebras that are Mal’tsev varieties. The two notions of Huq commutator and Smith commutator coincide, for instance, in the varieties of
groups, Lie algebras, associative algebras, and non-unital rings. On the contrary, Huq$\ne$Smith for digroups, loops, and near-rings. 

MacLane \cite{8diHuq14} was the first to observe that for a group $G$ to be abelian it is both necessary and sufficient that the multiplication $\cdot\colon G\times G\to G$ is a homomorphism of the direct product $G\times G$ into $ G$.
In any semiabelian category the full subcategory of abelian objects is abelian.
In the semiabelian category of non-unital rings the abelian objects are the rings with zero multiplication.
But the categories we are studying in this paper, i.e., the category of heaps and that of (near-)trusses, are very far from being semiabelian. 
In the categories of heaps and trusses there is not a null object, and in the category of trusses there are not objects with zero multiplication (but there are objects with constant multiplication, where $x\cdot y=a$ for every $x,y\in X$ and $a$ is a fixed object of $X$).

Now the Huq commutator concerns the construction of the commutator of two morphisms with the same codomain: it concerns, for any pair $f\colon A\to C$ and $g\colon B\to C$ of morphisms, the construction of a morphism which universally makes them commute. Typically, in a semiabelian category, the two morphisms $f$ and $g$ are the inclusions of two normal subobjects of an object $X$ of the semiabelian category into $X$. Now, in a semiabelian category, the normal subobjects of an object $A$ are in one-to-one correspondence with the congruences (=kernel pairs) on $A$ \cite[Theorem~3.4]{Borceux}. We saw in Theorem~\ref{bijp} and we will see in Theorem~\ref{bijpp} that this is not the case for heaps and near-trusses, and there is not even a null object in these categories. Hence the question 
 ``Huq=Smith?'' must be revisited for the algebraic structures with a ternary operation we are studying.

If $\Cal C$ is a category with finite limits, for any congruence $R$ on an object $X$, let $d_0\colon R\to X$ and $d_1\colon R\to X$ denote the first and the second projections of $R$. For any two congruences $R$ and $S$, let $R\times_X S$ denote the pullback $$\xymatrix{
R\times_X S\ar[r]^{p_1}\ar[d]_{p_0}&S\ar[d]^{d_0} \\
R\ar[r]_{d_1} &\phantom{.}X. }$$ For instance, if $X$ is a heap (or a left near-truss), and $R$ and $S$ are congruences on $X$, then $$R\times_X S=\{\,(x,y,z)\mid x,y,z\in X,\ xRy\ \mbox{\rm and}\ ySz\,\}.$$ The canonical {\em connector} between $R$ and $S$ (\cite[Example 1.2]{BG}, \cite{Ped1} and \cite{Ped2}) is the mapping $p\colon R\times_XS\to X$ defined by $p(x,y,z)=[x,y,z]$ for every $(x,y,z)\in R\times_XS$. The Smith commutator of $R$ and $S$ is the smallest congruence $T$ on $X$ for which the mapping \begin{equation}
    R\times_X S\to X/T,\qquad (x,y,z)\mapsto [p(x,y,z)]_T,\label{sru}
\end{equation} is a heap (left near-truss, resp.) morphism.

Fix an element $e\in X$, and consider the subheap $$\{\,(x,e,z)\mid x,z\in X,\ xRe\ \mbox{\rm and}\ eSz\,\}$$ of $X^3$. That is, $$\{\,(x,e,z)\mid x\in[e]_R\ \mbox{\rm and}\ z\in[e]_S\,\},$$  which is clearly in a one-to-one correspondence with the cartesian product $A\times B$, where $A:=[e]_R$ and $B:=[e]_S$ are the equivalence classes of $e$ modulo $R$ and $S$ respectively, that is, the normal subgroups (ideals) corresponding to $R$ and $S$ in $(X,b_e)$ (Proposition~\ref{eq} and Lemma~\ref{eq''}). Thus the mapping (\ref{sru}) restricts to a well defined group morphism ($J$-ring morphism) $$A\times B\to X/T,\qquad (x,z)\mapsto [p(x,e,z)]_T=[x+_ez]_T.$$ Thus $[e]_T\supseteq C_e$, where $C_e$ is the smallest ideal of the $J$-ring $(X,b_e,\cdot)$ for which the mapping $$A\times B\to X/C_e,\qquad (x,z)\mapsto p(x,e,z)+C_e=x+_ez+_eC_e$$ is a well defined group morphism ($J$-ring morphism). 
Now, in our case with a ternary operation, the question ``Huq=Smith''  asks whether the congruence $T$ and the ideal $C_e$ correspond to each other for every possible choice of $e$, that is, whether $T{}={}\sim_{C_e}$ or, equivalently, $C_e=[e]_T$.
Thus, in Theorem~\ref{4.1} we have proved exactly that Huq=Smith holds for heaps.

\bigskip

Let us pass to consider the case of left near-trusses. Since Huq${}\ne{}$Smith for near-rings \cite{J}, and every left near-ring $(N,+,\cdot{})$ naturally produces a left near-truss $(N,[-,-,-],\cdot{})$ (Example~\ref{6.2*}(2)),
it is not surprising that Huq${}\ne{}$Smith for left near-trusses. The example is the same as the example in \cite[Section~4]{J}, except for the fact that in the passage from near-rings to near-trusses we must pass from the {\em right} near-ring of \cite[Section~4]{J} to {\em left} near-trusses as we have done in most of this paper. The example that Huq${}\ne{}$Smith for left near-trusses is the following. Let $M$ be any abelian group with a nonzero proper subgroup $K$, and let $X$ be the direct product $X:=M^3=M\times M\times M$ of three copies of the group $M$ with the usual ternary operation $[x,y,z]:=x-y+z$ for every $x,y,z\in X=M^3$. As a multiplication in $X$, define $$(n_1,n_2,n_3)(m_1,m_2,m_3):=\left\{\begin{array}{ll}
    (m_2,0,0)&  \mbox{\rm if\ $n_2\ne 0$ and $n_3\ne 0$}\\ (0,0,0)&  \mbox{\rm  if\ $n_2= 0$ or $n_3= 0$.}\end{array}\right.$$ It is possible to see that $X$ is a left near-ring and that the Huq commutator and the Smith commutator for the two ideals $A = M\times K\times\{0\}$ and $B = M\times\{0\}\times M$ of the left near-ring $(X,+,\cdot)$ are different \cite[Section~4]{J}. Let us be more precise to explain in detail what occurs for the corresponding left near-truss. 

    Given the left near-ring $(X=M^3,+,\cdot)$ defined in the previous paragraph, then $X$ yields a left near-truss $(X,[-,-,-],\cdot{})$  according to Example~\ref{6.2*}(2). Consider the two ideals $A = M\times K\times\{0\}$ and $B = M\times\{0\}\times M$ of the left near-ring $X$. Correspondingly, there are two congruences $R$ and $S$ on the left near-truss $(X,[-,-,-],\cdot{})$ for which $A=[0]_R$ and $B=[0]_S$. Here $0$ is the identity of the group $(X=M^3,+)$. In our terminology one has that $R\times_XS=\{\,(a+x,x,x+b)\mid a\in A,\ x\in X,\ b\in B\,\}$. There is a one-to-one correspondence between the ideals $C$ of the left near-ring $(X=M^3,+,\cdot)$ and the congruences $T$ on the left near-truss $(X,[-,-,-],\cdot{})$. The mapping $p\colon R\times_XS\to X/C$, $p(a+x,x,x+b)=a+x+b+C$, is a left near-ring morphism if and only if it is a left near-truss morphism $p\colon R\times_XS\to X/T$. Now $p$ is a heap morphism because $X=M^3$ is an abelian heap. Hence $p\colon R\times_XS\to X/T$ is a left near-truss morphism if and only if $p\colon R\times_XS\to X/C$ respects multiplication, that is, if and only if $p((a_1+x_1,x_1,x_1+b_1)(a_2+x_2,x_2,x_2+b_2))=p(a_1+x_1,x_1,x_1+b_1)p(a_2+x_2,x_2,x_2+b_2)$ for every $a_1,a_2\in A$, $x_1,x_2\in X$, $b_1,b_2\in B$. Equivalently, if and only if $(a_1+x_1)(a_2+x_2)-x_1x_2+(x_1+b_1)(x_2+b_2))+C=(a_1+x_1+b_1)(a_2+x_2+b_2)+C$ for every $a_1,a_2\in A$, $x_1,x_2\in X$, $b_1,b_2\in B$. It follows that the ideal $C$ of the left near-ring $X$ corresponding to the Smith commutator $[R,S]$ in the left near-truss $X$ is generated by the set $S=\{\,(a_1+x_1)(a_2+x_2)-x_1x_2+(x_1+b_1)(x_2+b_2))-(a_1+x_1+b_1)(a_2+x_2+b_2)\mid a_1,a_2\in A, \ x_1,x_2\in X,\ b_1,b_2\in B\,\}.$ The same computation, specialized to the case $x_1=x_2=0$, shows that the ideal $C'$ of the left near-ring $X$ corresponding to the the Huq commutator of $R$ and $S$ is the ideal of the left near-ring $X$ generated by the set $H=\{\,a_1a_2+b_1b_2-(a_1+b_1)(a_2+b_2)\mid a_1,a_2\in A, \ b_1,b_2\in B\,\}.$ 

    In the example of \cite[Section~4]{J} cited above, the Huq commutator of $R$ and $S$ is contained in $K\times\{0\}\times\{0\}$ because in this case $H=\{\,(m_1,k_1,0)(m_2,k_2,0)+(m_3,0,m_4)(m_5,0,m_6)+(m_1+m_3,k_1,m_4)(m_2+m_5,k_2,m_6)\mid  m_1,m_2,m_3,m_4,m_5,m_6\in M, \ k_1,k_2\in K\,\}$, and one has that $(m_1,k_1,0)(m_2,k_2,0)=0$, $(m_3,0,m_4)(m_5,0,m_6)=0$, and $(m_1+m_3,k_1,m_4)(m_2+m_5,k_2,m_6)\in K\times\{0\}\times\{0\}$. 

    On the contrary, one sees that the set $S$ of generators contains (for $k$ a non-zero element of $K$, $m$ a non-zero element of $M$, $t$ and element of $M$, $a_1=(0,-k,0)$, $a_2=(0,0,0)$, $x_1=(0,k,m)$, $x_2=(0,t,0)$, $b_1=(0,0,-m)$ and $b_2=(0,0,0)$) the element $(a_1+x_1)(a_2+x_2)-x_1x_2+(x_1+b_1)(x_2+b_2))-(a_1+x_1+b_1)(a_2+x_2+b_2)=(0,0,m)(0,t,0)-(0,k,m)(0,t,0)+(0,k,0)(0,t,0)-(0,0,0)=(t,0,0)$. Hence the Smith commutator contains $M\times\{0\}\times\{0\}$, and therefore the Smith commutator is not contained in the Huq commutator in this example.

\section{Semidirect product of left near-trusses}

Let us pass to consider semidirect product of left near-trusses, equivalently idempotent endomorphisms of left near-trusses. 

\begin{prop}\label{boh'} Let $X\ne\emptyset$ be a left near-truss, $Y$ be a subnear-truss of $X$, and $\omega$ a congruence on the left near-truss $X$. The following conditions are equivalent:

{\rm (a)} $Y$ is a set of representatives of the equivalence classes of $X$ modulo $\omega$ (i.e., $Y\cap[x]_\omega$ is a singleton for every $x\in X$).

{\rm (b)} There exists an idempotent left near-truss endomorphism of $X$ whose image is $Y$ and whose kernel is $\omega$.

{\rm (c)} For every $e\in Y$, there exists an idempotent $J$-ring endomorphism of the $J$-ring $(X,b_e,\cdot)$ whose image is the sub-$J$-ring $Y$ of $(X,b_e,\cdot)$ and whose kernel is the ideal $[e]_\omega$ of $(X,b_e,\cdot)$. 

{\rm (d)} There exist an element $e\in Y$ and an idempotent $J$-ring endomorphism of the $J$-ring $(X,b_e,\cdot)$ whose image is the sub-$J$-ring $Y$ of $(X,b_e,\cdot)$ and whose kernel is the ideal $[e]_\omega$ of $(X,b_e,\cdot)$. 

{\rm (e)} For every $e\in Y$, the group $(X,b_e)$ is the semidirect product of its subgroup $Y$ and its normal subgroup $[e]_\omega$. 

{\rm (f)} There exist an element $e\in Y$ such that the group $(X,b_e)$ is the semidirect product of its subgroup $Y$ and its normal subgroup $[e]_\omega$. 

{\rm (g)}
For every $a\in X$ and every $c\in Y$ there exist a unique element $b\in X$ and a unique element $d\in Y$ such that $a=p(b,c,d)$ and $b\,\omega\, c$.

  {\rm (h)}
For every $a\in X$ and every $c\in Y$ there exist a unique element $b\in X$ and a unique element $d\in Y$ such that $a=p(d,c,b)$ and $b\,\omega\, c$.

{\rm (i)} The mapping $g\colon Y\to X/\omega$, defined by $g(y)=[y]_\omega$ for every $y\in Y$, is a left near-truss isomorphism.

{\rm (l)} There exists a left near-truss endomorphism of $X$ whose kernel is $\omega$ and whose restriction to $Y$ is the inclusion of $Y$ in $X$.
\end{prop}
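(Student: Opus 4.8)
The plan is to bootstrap everything from Proposition~\ref{boh} applied to the underlying heap $(X,[-,-,-])$, and then to observe that under the standing hypotheses — $Y$ a sub-near-truss and $\omega$ a congruence of the \emph{left near-truss} $X$, so that $X/\omega$ is a left near-truss and the projection $\pi\colon X\to X/\omega$ is a left near-truss morphism — the multiplicative structure is carried along for free at every step.

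First I would note that each of the remaining conditions (b)--(l) of Proposition~\ref{boh'} implies the condition with the same label in Proposition~\ref{boh}: a left near-truss endomorphism is in particular a heap endomorphism; a $J$-ring endomorphism of $(X,b_e,\cdot{})$ preserves $b_e$, the identity $e$ and group inverses $[e,x,e]$, hence preserves $[x,y,z]=b_e(b_e(x,[e,y,e]),z)$ and is therefore a group endomorphism of $(X,b_e)$, with image a subgroup and kernel a normal subgroup; conditions (e) and (f) are verbatim the group-theoretic statements already appearing in Proposition~\ref{boh}; and (a), (g), (h) mention no multiplication at all. Since the conditions of Proposition~\ref{boh} are equivalent, it follows that every condition of Proposition~\ref{boh'} implies its condition~(a), which is the same statement as condition~(a) of Proposition~\ref{boh}. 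Hence it suffices to prove that (a) implies all of (b)--(l).

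So assume (a). By Proposition~\ref{boh} there is an idempotent heap endomorphism $f$ of $X$ determined by $Y\cap[x]_\omega=\{f(x)\}$, with $f(X)=Y$ and $\ker f=\omega$; moreover (g) and (h) hold, and $g\colon Y\to X/\omega$ is a bijection. The one genuinely new point is that $f$ is a \emph{left near-truss} endomorphism: for $x,y\in X$ we have $f(x)\,\omega\,x$ and $f(y)\,\omega\,y$, hence $f(x)f(y)\,\omega\,xy$ because $\omega$ is a congruence of the left near-truss $X$, while $f(x)f(y)\in Y$ because $Y$ is a sub-near-truss; therefore $f(x)f(y)\in Y\cap[xy]_\omega=\{f(xy)\}$, i.e.\ $f(xy)=f(x)f(y)$. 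This gives (b). For (c) and (d), fix $e\in Y$: by Theorem~\ref{6.2} the algebra $(X,b_e,\cdot{})$ is a $J$-ring; $Y$, being a subheap containing $e$ and closed under $\cdot{}$, is a sub-$J$-ring of it; $[e]_\omega$ is an ideal of this $J$-ring by Lemma~\ref{eq''}; and since $f(e)=e$, the map $f$ restricts to an idempotent $J$-ring endomorphism of $(X,b_e,\cdot{})$ with image $Y$ and kernel $[e]_\omega$. Forgetting the multiplication yields (e) and (f) by the standard group fact already invoked in Proposition~\ref{boh}. Finally, $g(y)=[y]_\omega=\pi(y)$ is a left near-truss morphism because $\pi$ is, and a bijective morphism in a variety of algebras has a morphism as its inverse, so $g$ is a left near-truss isomorphism, giving (i); and (l) follows by taking the left near-truss endomorphism $\varepsilon(\pi\varepsilon)^{-1}\pi$ exactly as in the proof of (i)$\Rightarrow$(l) in Proposition~\ref{boh}, now a composite of left near-truss morphisms.

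The only real work is the bookkeeping around the $J$-ring $(X,b_e,\cdot{})$: that it is genuinely a $J$-ring (Theorem~\ref{6.2}), that the hypothesis ``$Y$ is a sub-near-truss'' produces a \emph{sub-}$J$-ring of $(X,b_e,\cdot{})$ rather than merely a subgroup of $(X,b_e)$, and that ``$\omega$ is a left near-truss congruence'' produces an \emph{ideal} of that $J$-ring via Lemma~\ref{eq''}. Once these identifications are in place, every remaining step is a transcription of the corresponding step in the proof of Proposition~\ref{boh}, so I expect no genuine obstacle beyond this routine transfer of structure.
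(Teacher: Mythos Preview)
Your proposal is correct and follows essentially the same route as the paper: both reduce to Proposition~\ref{boh} and identify the only new content as the verification that the idempotent heap endomorphism $f$ defined by $\{f(x)\}=Y\cap[x]_\omega$ also respects multiplication, which you prove exactly as the paper does. Your organization differs only cosmetically---you argue ``each condition implies (a) via Proposition~\ref{boh}, then (a) implies each condition''---whereas the paper proves (a)$\Rightarrow$(b) and then asserts that the remaining implications transfer directly from the cycle in Proposition~\ref{boh}; in fact your version is slightly more explicit about the points (e.g.\ that $g$ in (i) is a \emph{left near-truss} isomorphism, that $[e]_\omega$ is a $J$-ring ideal via Lemma~\ref{eq''}) that the paper leaves implicit.
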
 

\begin{Proof} We will constantly make use of Proposition~\ref{boh} and its proof.

    (a)${}\Rightarrow{}$(b) We must prove that the heap endomorphism $f$ defined in the proof of (a)${}\Rightarrow{}$(b) of Proposition~\ref{boh} also respect multiplication. The mapping $f\colon X\to X$ associates with every $x\in X$ the unique element of the singleton $Y\cap[x]_\omega$. We must prove that if $x,y\in X$, then $f(xy)=f(x)f(y)$. Let $x',y'$ be the unique elements of  $Y\cap[x]_\omega$ and $Y\cap[y]_\omega$,  respectively, so that $f(x)=x'$ and $f(y)=y'$. Then $x\,\omega\, x'$ and $y\,\omega\, y'$ because $f$ is an idempotent mapping and $\omega$ is its kernel. Since $\omega$ is a congruence for a left near-truss, we get that $xy\,\omega\, x'y'$. Thus $x'y'$ belongs to the congruence class of $xy$ modulo $\omega$. Also, $x'y'$ belongs to $Y$, since $Y$ is multiplicatively closed. Thus $x'y'\in Y\cap[xy]_\omega$, so $f(xy)=x'y'=f(x)f(y)$.

All the other implications (b)${}\Rightarrow{}$(c),
(c)${}\Rightarrow{}$(d), 
(d)${}\Rightarrow{}$(b), 
(b)${}\Rightarrow{}$(a), 
(c)${}\Leftrightarrow{}$(e), 
(d)${}\Leftrightarrow{}$(f), 
(e)${}\Leftrightarrow{}$(g), 
(e)${}\Leftrightarrow{}$(h), (a)${}\Leftrightarrow{}$(i), (i)${}\Rightarrow{}$(l) and 
(l)${}\Rightarrow{}$(i)
follow immediately from the corresponding implications in the proof of Proposition~\ref{boh}. \end{Proof}

If $X$ is a left near-truss and its subnear-truss $Y$ and the congruence $\omega$ on $X$ satisfy the equivalent conditions of Proposition~\ref{boh'}, the left near-truss $X$ is the {\em (inner) semidirect product} of $Y$ and $\omega$, and we write $X=\omega\rtimes Y$. 

\begin{prop}
    Let $X$ be a left near-truss. Then there is a one-to-one correspondence between:

    {\rm (a)} The set $A$ of all idempotent left near-truss endomorphisms of $X$.

     {\rm (b)} The set $B$ of all pairs $(\omega,Y)$, where $\omega$ is a congruence on the left near-truss $X$, $Y$ is a subnear-truss of $X$, and $X=\omega\rtimes Y$.
\end{prop}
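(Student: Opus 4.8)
The plan is to reproduce, almost verbatim, the argument used for the heap analogue above, with Proposition~\ref{boh'} in the place of Proposition~\ref{boh}. First I would describe the two maps. Given an idempotent left near-truss endomorphism $f\in A$, assign to it the pair $(\ker(f),f(X))$: here $\ker(f):=\{\,(x,x')\mid f(x)=f(x')\,\}$ is a congruence on the left near-truss $X$ because $f$ is a left near-truss morphism (it respects both $[-,-,-]$ and $\cdot$), and $f(X)$ is a subnear-truss because it is the image of a morphism, hence closed under $[-,-,-]$ and under $\cdot$. Moreover $f$ itself is an idempotent left near-truss endomorphism with image $f(X)$ and kernel $\ker(f)$, so condition (b) of Proposition~\ref{boh'} holds and $X=\ker(f)\rtimes f(X)$; thus $(\ker(f),f(X))\in B$. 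Conversely, given $(\omega,Y)\in B$, the implication (a)$\Rightarrow$(b) of Proposition~\ref{boh'} (whose proof produces the map $x\mapsto$ the unique element of $Y\cap[x]_\omega$) yields an idempotent left near-truss endomorphism $f$ with $f(X)=Y$ and $\ker(f)=\omega$.

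Next I would verify that the two assignments are mutually inverse. Starting from $f\in A$ and setting $(\omega,Y):=(\ker(f),f(X))$, the implication (b)$\Rightarrow$(a) of Proposition~\ref{boh} (which is purely a statement about the underlying heap) gives $Y\cap[x]_\omega=\{f(x)\}$ for every $x\in X$, so the endomorphism the construction associates with $(\omega,Y)$ is $f$ itself. In the other direction, starting from $(\omega,Y)\in B$ and letting $f$ be the idempotent endomorphism determined by $Y\cap[x]_\omega=\{f(x)\}$, one has $f(X)=Y$ by definition, and $\ker(f)=\omega$ because $f$ is idempotent with $f(x)\,\omega\,x$ for every $x$; hence the pair recovered from $f$ is $(\omega,Y)$. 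The degenerate case $X=\emptyset$ is handled as in the heap setting: the only idempotent endomorphism is the identity, the only admissible pair is the empty congruence together with the empty subnear-truss, and the correspondence is trivial; in the body of the proof I would invoke Proposition~\ref{boh'} only for $X\ne\emptyset$.

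The work here is essentially bookkeeping rather than a genuine obstacle, and the only point that requires attention is making sure the ``dictionary'' is the right one: that the congruence $\ker(f)$ on the left near-truss is exactly the congruence $\omega$ featuring in Proposition~\ref{boh'}, and that ``subnear-truss'' (a subset closed under both operations) is the correct notion of image. Once this is matched with the equivalences (a)$\Leftrightarrow$(b) of Proposition~\ref{boh'}, bijectivity is immediate; in particular nothing new beyond the heap case is needed, since the proof of Proposition~\ref{boh'} already records that the heap endomorphism $x\mapsto$ (unique element of $Y\cap[x]_\omega$) automatically respects the multiplication once $Y$ is multiplicatively closed and $\omega$ is a left near-truss congruence.
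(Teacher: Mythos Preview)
Your proposal is correct and follows the same approach as the paper. The paper itself does not spell out a proof of this proposition; it is stated immediately after Proposition~\ref{boh'} as an evident consequence, just as the heap analogue is stated after Proposition~\ref{boh} with only the description of the two maps $f\mapsto(\ker(f),f(X))$ and $(\omega,Y)\mapsto\bigl(x\mapsto\text{the unique element of }Y\cap[x]_\omega\bigr)$. Your write-up simply fills in the routine verification that these are mutually inverse, which is exactly what the paper leaves implicit.
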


\section{Derivations of trusses}

In \cite[Definition 3.10]{Brzez} an interesting notion of derivation in a truss is determined. Let $T$ be a truss. A heap homomorphism $D\colon T\to T$ is called a {\em derivation} if, for all $a,b\in T$,
$D(ab) = [D(a)b, ab, aD(b)]$. 

Let us see that elementary properties of derivations for rings also have a suitable analogue for derivations of trusses:

(1) Derivations on $T$ form an abelian heap which we denote by $\Der(T).$

\begin{Proof}: We will show that $\Der(T)$ is a subheap of $\End(T,[-,-,-])$, that is, that if $D_1,D_2,D_3\in \Der(T)$, then $[D_1,D_2,D_3]$ belongs to $\Der(T)$. In fact 

\centerline{ $\begin{array}{l}[D_1,D_2,D_3](ab)=[D_1(ab),D_2(ab),D_3(ab)]= \\ \qquad=[[D_1(a)b, ab, aD_1(b)],[D_2(a)b, ab, aD_2(b)],[D_3(a)b, ab, aD_3(b)]]= \\ \qquad=[[D_1(a)b, D_2(a)b, D_3(a)b],[ab,ab,ab],[aD_1(b),aD_2(b),aD_3(b)]= \\ \qquad=[[D_1,D_2,D_3](a)b, ab, a[D_1,D_2,D_3](b)]).\end{array}$}
\quad \quad \quad \quad \quad \quad \quad \quad \quad \quad \quad \quad \quad \quad \quad \quad \quad \quad \quad \quad \quad \quad \quad \quad \quad \quad \quad \quad \quad \quad \quad \quad \quad \quad \quad \quad \quad \quad \quad \end{Proof}

Already in this proof we use right and left distributivity and the fact that the heap is abelian. This explains why in this Section we deal with trusses and not with left near-trusses like in the rest of the paper.

(2) For derivations of rings we have that if $D,D'\colon R\to R$ are derivations of a ring $R$, then $DD'-D'D$ is a derivation of $R$. The analogous property for derivations of trusses is the following:

\begin{Lemma} Let $D,D'\colon T\to T$ be two derivations of a truss $(T,[-,-,-],\cdot{})$. Let $\iota_T\colon T\to T$ be the identity mapping. Then $[DD',D'D,\iota_T]\colon T\to T$ is a derivation of the truss $T$.
\end{Lemma}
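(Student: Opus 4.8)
The plan is to split the verification into a trivial structural part and a computational part. For the structural part, note that $DD'$, $D'D$ and $\iota_T$ are heap endomorphisms of $T$ (compositions of heap homomorphisms, and the identity map), and that the ternary operation on $\End_{\Hp}(T)$ is computed pointwise, so that $E:=[DD',D'D,\iota_T]$ again lies in $\End_{\Hp}(T)$, exactly as in the proof of~(1) above. It therefore suffices to check the Leibniz identity $E(ab)=[E(a)b,ab,aE(b)]$ for all $a,b\in T$.

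To carry out this computation efficiently I would fix an element $e\in T$ and work inside the group $(T,b_e)$, writing it additively with neutral element $e$ and identifying $[x,y,z]$ with $x-y+z$. Since $T$ is a truss its heap is abelian, so this group is abelian; by Remark~\ref{2.2} every iterated application of the ternary operation may then be written as an alternating sum whose terms may be rearranged freely, provided parities are preserved. Crucially, in a truss both left and right distributivity are available, and this is precisely the reason why here we work with trusses rather than with left near-trusses.

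The core step is a single substitution. Starting from $D'(ab)=D'(a)b-ab+aD'(b)$, apply the heap homomorphism $D$ termwise and then the derivation identity for $D$ to each of the three resulting products; this yields a nine-term alternating sum for $DD'(ab)$. Interchanging the roles of $D$ and $D'$ gives the corresponding nine-term expression for $D'D(ab)$. Then $E(ab)=DD'(ab)-D'D(ab)+ab$, and in the difference $DD'(ab)-D'D(ab)$ all terms cancel except $DD'(a)b$ and $aDD'(b)$ coming from $DD'(ab)$ and $D'D(a)b$ and $aD'D(b)$ coming from $D'D(ab)$, so that
$$E(ab)=DD'(a)b-D'D(a)b+aDD'(b)-aD'D(b)+ab.$$
On the other hand $E(a)=DD'(a)-D'D(a)+a$, whence right distributivity gives $E(a)b=DD'(a)b-D'D(a)b+ab$ and left distributivity gives $aE(b)=aDD'(b)-aD'D(b)+ab$; therefore $E(a)b-ab+aE(b)$ equals exactly the displayed expression. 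This proves $E(ab)=[E(a)b,ab,aE(b)]$, i.e.\ that $E=[DD',D'D,\iota_T]$ is a derivation of $T$.

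The only delicate point is the sign bookkeeping in the two nine-term sums, so that the cancellation is transparent; this is entirely governed by the abelianness of the heap together with Remark~\ref{2.2}. No ingredient beyond the two distributivity laws, the derivation identities for $D$ and $D'$, and the heap-homomorphism property of $D$ and $D'$ enters the argument.
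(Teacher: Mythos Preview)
Your proof is correct and follows essentially the same route as the paper's own argument. Both proofs first observe that $E=[DD',D'D,\iota_T]$ is a heap endomorphism, then expand $DD'(ab)$ and $D'D(ab)$ via two successive applications of the derivation identity into nine-term expressions, cancel the common terms, and compare with $[E(a)b,ab,aE(b)]$ expanded via left and right distributivity. The only presentational difference is that the paper stays in the abelian-heap bracket notation and invokes Remark~\ref{2.2} for the rearrangements and cancellations, whereas you pass once and for all to the retract group $(T,b_e)$ and work additively; these are equivalent devices and your additive bookkeeping is accurate.
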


\begin{Proof} Clearly, $[DD',D'D,\iota_T]\colon T\to T$ is a heap morphism of the heap $(T,[-,-,-])$. In order to show that it is a truss derivation, we must show that, for all $a,b\in T$, \begin{equation}[DD',D'D,\iota_T](ab)=[[DD',D'D,\iota_T](a)\cdot b,ab,a\cdot[DD',D'D,\iota_T](b)].\label{der}
\end{equation} Making use of Remark~\ref{2.2}, we see that the term on the left hand side of (\ref{der}) is \begin{equation}
\noindent\begin{array}{l}[DD',D'D,\iota_T](ab)=[DD'(ab),D'D(ab),ab]=[D([D'(a)b, ab, aD'(b)]),\dots,ab]= \\ \qquad =[[D(D'(a)b),D(ab),D(aD'(b)],\dots,ab]= \\ \qquad = [[[(DD'(a))b,D'(a)b,D'(a)D(b)],[D(a)b,ab,aD(b)],[D(a)D'(b),aD'(b), \\ \qquad\qquad aDD'(b)],\dots,ab]= \\ \qquad = 
[(DD'(a))b,D'(a)b,D'(a)D(b),D(a)b,ab,aD(b),D(a)D'(b),aD'(b),aDD'(b), \\ \qquad\qquad
    (D'D(a))b,D(a)b,D(a)D'(b),D'(a)b,ab,aD'(b),D'(a)D(b),aD(b),aD'D(b),
    ab]= \\ \qquad =
[(DD'(a))b,D'(a)b,D'(a)b,D'(a)D(b),D'(a)D(b),D(a)b,D(a)b,ab,ab,aD(b),aD(b), \\ \qquad\qquad D(a)D'(b),D(a)D'(b),aD'(b),aD'(b),aDD'(b),
    (D'D(a))b,aD'D(b),
    ab]= \\ \qquad =
[(DD'(a))b,aDD'(b),
    (D'D(a))b,aD'D(b),
    ab]= \\ \qquad =
    [(DD'(a))b,(D'D(a))b,aDD'(b),
    aD'D(b),
    ab].\end{array}\end{equation}
Similarly, the term on the right hand side of (\ref{der}) is \begin{equation}\begin{array}{l}[[DD',D'D,\iota_T](a)\cdot b,ab,a\cdot[DD',D'D,\iota_T](b)]= \\ \qquad=
[[DD'(a),D'D(a),a]\cdot b,ab,a\cdot[DD'(b),D'D(b),b]= \\ \qquad =
[DD'(a)\cdot b,D'D(a)\cdot b,ab,ab,a\cdot DD'(b),a\cdot D'D(b),ab]= \\ \qquad = 
[DD'(a)\cdot b,D'D(a)\cdot b,a\cdot DD'(b),a\cdot D'D(b),ab].\end{array}\end{equation} This concludes the proof of the Lemma.\end{Proof}

(3) One of the first examples of derivation for a ring $R$ is, for any fixed element $a\in R$, the mapping $d_a\colon R\to R$ defined by $d_a(x)=ax-xa$ for every $x\in R$. The analogue for trusses is the following:

\begin{thm} Let $(T,[-,-,-],\cdot{})$ be a truss and $a$ an element of $T$. Let $\lambda_a\colon T\to T$ and $\rho_a\colon T\to T$ be left (right) multiplication by $a$ respectively. Then the mapping $D_a:=[\lambda_a,\rho_a,\iota_T]\colon T\to T$ is a derivation of $T$.
    \end{thm}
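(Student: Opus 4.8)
The plan is to verify directly that $D_a := [\lambda_a, \rho_a, \iota_T]$ satisfies the defining identity of a truss derivation, namely $D_a(xy) = [D_a(x)\cdot y,\, xy,\, x\cdot D_a(y)]$ for all $x,y\in T$, after first observing that $D_a$ is automatically a heap morphism. The latter is immediate: $\lambda_a$, $\rho_a$, and $\iota_T$ are all heap morphisms of $(T,[-,-,-])$ (left and right multiplication are heap morphisms by left/right distributivity in a truss, and the identity trivially is), and $\End_\Hp(T)$ is a heap under the pointwise operation, so the ternary combination $[\lambda_a,\rho_a,\iota_T]$ is again a heap endomorphism.

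The substantive step is the comparison of the two sides of the derivation identity. First I would expand the left side: $D_a(xy) = [\lambda_a(xy),\, \rho_a(xy),\, xy] = [a(xy),\, (xy)a,\, xy]$. Then I would expand the right side: using $D_a(x) = [ax, xa, x]$ and $D_a(y) = [ay, ya, y]$, right distributivity gives $D_a(x)\cdot y = [axy,\, xay,\, xy]$ and left distributivity gives $x\cdot D_a(y) = [xay,\, xya,\, xy]$, so the right side becomes $[\,[axy, xay, xy],\, xy,\, [xay, xya, xy]\,]$. Using associativity of the ternary operation and the conventions of Remark~\ref{2.2} (in the abelian heap $T$ one may drop brackets, permute within each parity class, and cancel adjacent equal entries), this flattens to a word $[axy,\, xay,\, xy,\, xy,\, xay,\, xya,\, xy]$ in which the two middle occurrences of $xy$ cancel and the two occurrences of $xay$ — both in odd positions — cancel after being brought together; what survives is $[axy,\, xya,\, xy]$. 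The left side $[a(xy),\, (xy)a,\, xy]$ equals this by associativity of the ring-like multiplication $\cdot$, since $a(xy) = (ax)y = axy$ and $(xy)a = x(ya) = xya$. Hence the two sides agree.

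The main obstacle is purely bookkeeping: one must keep careful track of the parity of each position when flattening the nested ternary terms via Remark~\ref{2.2}, since it is exactly the parity-respecting rearrangement and the cancellation of adjacent equal entries that make the two sides coincide — an error in parity would make a spurious term survive. I would therefore carry out the flattening of the right-hand side explicitly, one bracket at a time, labelling the parity of each slot, rather than trying to shortcut it; everything else (the heap-morphism check, and the identifications $a(xy)=axy$, $(xy)a=xya$ from associativity of $\cdot$) is routine. Note this argument genuinely uses both distributive laws and commutativity of the heap, which is why the statement is phrased for trusses rather than left near-trusses.
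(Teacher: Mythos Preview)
Your argument is correct and matches the paper's proof essentially line for line: expand both sides, use distributivity to get $[[axy,xay,xy],xy,[xay,xya,xy]]$, flatten via Remark~\ref{2.2} to the $7$-term word $[axy,xay,xy,xy,xay,xya,xy]$, and cancel down to $[axy,xya,xy]=D_a(xy)$. One small slip: in the $7$-term word the two occurrences of $xay$ sit in positions $2$ (even) and $5$ (odd), not ``both in odd positions''; the cancellation actually works because after deleting the adjacent pair $xy,xy$ at positions $3,4$ the two $xay$'s become adjacent and then cancel --- exactly the parity-bookkeeping pitfall you warned about.
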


{\sc Proof.}\ We must prove that, for every $x,y\in T$, $D_a(xy)=[D_a(x)y,xy,xD_a(y)]$. Now \begin{equation*}\begin{array}{l}[D_a(x)y,xy,xD_a(y)]=[[ax,xa,x]y,xy,x[ay,ya,y]]= \\ \qquad =
    [axy,xay,xy,xy,xay,xya,xy]=[axy,xya,xy]=D_a(xy).\qquad\rule{1ex}{1ex}\end{array}\end{equation*}
\vspace{0.2truecm}

\end{document}